\newcommand{\rmnum}[1]{\romannumeral #1}
\newcommand{\Rmnum}[1]{\expandafter\@slowromancap\romannumeral #1@}
\theoremstyle{plain}
\newtheorem{thm}{Theorem}[section]
\newtheorem{cor}[thm]{Corollary}
\newtheorem{lem}[thm]{Lemma}
\newtheorem{prop}[thm]{Proposition}
\newtheorem{ques}[thm]{Question}
\theoremstyle{definition}
\newtheorem{defn}[thm]{Definition}
\theoremstyle{remark}
\newtheorem{rem}[thm]{Remark}
\theoremstyle{plain}
\numberwithin{equation}{section}
\DeclareMathOperator{\ass}{Ass}
\DeclareMathOperator{\im}{im}
\DeclareMathOperator{\depth}{depth}
\DeclareMathOperator{\grade}{grade}
\DeclareMathOperator{\Hom}{Hom}
\DeclareMathOperator{\height}{ht}
\DeclareMathOperator{\pd}{pd}
\DeclareMathOperator{\rank}{rank}
\DeclareMathOperator{\soc}{soc}
\DeclareMathOperator{\spec}{Spec}
\DeclareMathOperator{\tor}{Tor}
\def\udot#1{\ifmmode\oalign{$#1$\crcr\hidewidth.\hidewidth
    }\else\oalign{#1\crcr\hidewidth.\hidewidth}\fi}
\begin{document}
\title[]{On Generalized Deformation Problems}
\author{Qiurui Li}
\address{Department of Mathematics, Purdue University, 150 N. University St., W. Lafayette, IN 47907, U.S.A.}%
\email{li2889@purdue.edu}

\begin{abstract}
Let $(R,\mathfrak{m})$ be a Noetherian local ring, and $I$ an $R$-ideal such that $\pd_R R/I<\infty$. If $R/I$ satisfies a property $\mathcal{P}$, it is natural to ask if $R$ would also have the property $\mathcal{P}$, we call this the generalized deformation problem. Our paper gives some properties that hold for the generalized deformation problems. There are two main parts in this paper. The first part is about $F$-singularities in the generalized deformation problems. Motivated by Aberbach's work, we show that if every maximal regular sequence on $R/I$ is Frobenius closed, then every regular sequence on $R$ is Frobenius closed. Moreover, under mild assumptions, Frobenius closed can be replaced by tightly closed. By these two results, we solve the generalized deformation problems for $F$-injectivity in the Cohen-Macaulay case and $F$-rationality under mild assumptions. Namely, if $R$ is a Noetherian local ring of characteristic $p$, and $I$ an $R$-ideal such that $\pd_R R/I<\infty$, then if $R$ is Cohen-Macaulay and $R/I$ is $F$-injective, then $R$ is $F$-injective. If $R$ is excellent and $R/I$ is $F$-rational, then $R$ is $F$-rational. The second part is about some basic properties of rings in arbitrary characteristic. We prove that if $R$ is a Noetherian local ring, $I$ an $R$-ideal such that $\pd_R R/I<\infty$, then if $R/I$ is $S_k$, $R_k+S_{k+1}$, normal, reduced or a domain, then so is $R$.
\end{abstract}

\maketitle

\section{Introduction}
Let us begin by recalling the deformation problem.
\begin{ques}\label{defquestion}
Let $(R,\mathfrak{m})$ be a Noetherian local ring, and $x\in\mathfrak{m}$ be a regular element. If $R/xR$ satisfies the property $\mathcal{P}$, then does $R$ itself satisfy the property $\mathcal{P}$?
\end{ques}
This problem has been studied extensively in characteristic $p>0$ when $\mathcal{P}=$ $F$-singularities, we list the best known progress of the related work.
\begin{enumerate}
\item[(1)] Strongly $F$-regularity fails the deformation property in general \cite{Sin99c}, but it deforms for normal $\mathbb{Q}$-Gorenstein rings \cite{AKM98}.
\item[(2)] $F$-purity fails the deformation property in general \cite{Fed83, Sin99b}, but it deforms for normal $\mathbb{Q}$-Gorenstein rings \cite{HW02, Sch09, PS20}.
\item[(3)] $F$-rationality always deforms \cite{HH94a}.
\item[(4)] Deformation of $F$-injectivity remains an open problem in general. But it is known that $F$-injectivity deforms for Cohen-Macaulay rings \cite{Fed83}, and that $F$-purity always deforms to $F$-injectivity \cite{HMS14}.
\end{enumerate}

More precisely, Singh in \cite{Sin99c} constructed an example where $R$ is a $3$-dimensional Cohen-Macaulay ring of characteristic $p>2$ and there is a regular element $t$ on $R$ such that $R/tR$ is strongly $F$-regular, but under some circumstance, $R$ is not $F$-pure. Since strongly $F$-regularity implies $F$-purity, this example shows the failure of the deformation problems of strongly $F$-regular and $F$-pure.

\smallskip
We also list some progress in some properties that do not have special requirements on the characteristics of the ring:
\begin{enumerate}
\item[(1)] Regularity always deforms \cite[Chapter 0, Corollary 17.1.8]{GD1}.
\item[(2)] Cohen-Macaulay rings always deform \cite[Theorem 2.1.3($a$)]{BH}.
\item[(3)] Gorenstein rings always deform \cite[Proposition 3.1.19($b$)]{BH}.
\item[(4)] Complete intersections always deform \cite[Theorem 2.3.4($a$)]{BH}.
\item[(5)] Normal rings always deform \cite[Proposition \Rmnum{1}.7.4]{Sey}.
\item[(6)] Reduced rings always deform \cite[Proposition 3.4.6]{GD2}.
\item[(7)] $S_k$ always deforms \cite[Lemma 0(\rmnum{1})]{BR}.
\item[(8)] $R_k$ always deforms under the condition that $R$ is equidimensional and catenary \cite[Lemma 2.1]{I}.
\item[(9)] $R_k+S_{k+1}$ always deform \cite[Lemma 0(\rmnum{2})]{BR}.
\item[(10)] Domains always deform \cite[Proposition 3.4.5]{GD2}.
\end{enumerate}

\smallskip
Clearly if $x$ is regular on $R$, we have that $\pd_R R/xR=1<\infty$, so this leads to a natural generalization of the deformation question:

\begin{ques}[Generalized Deformation Problem]
Let $(R,\mathfrak{m})$ be a Noetherian local ring, $I\subseteq R$ an ideal such that $\pd_R R/I<\infty$. If $R/I$ satisfies the property $\mathcal{P}$, then does $R$ itself satisfy $\mathcal{P}$?
\end{ques}

In the first part of this paper (Section 3 and Section 4) we still focus on $\mathcal{P}$ defined by the Frobenius. Since the deformation question fails for strongly $F$-regularity and $F$-purity by Singh's counterexample, we focus our attention to the cases when $\mathcal{P}=$ $F$-injectivity and $\mathcal{P}=$ $F$-rationality in this paper. In the second part of this paper (Section 5) we focus on the last six cases from the ten items we listed above since
\begin{enumerate}
\item[(1)] Regularity satisfies the generalized deformation problem \cite[Theorem 6.1(1)]{AFH}.
\item[(2)] Cohen-Macaulay rings satisfy the generalized deformation problem \cite[Theorem 2.1]{AFH}.
\item[(3)] Gorenstein rings satisfy the generalized deformation problem \cite[Theorem 4.3(1)(a)]{AFH}.
\item[(4)] Complete intersections satisfy the generalized deformation problem \cite[Theorem 5.3(\rmnum{1})$\Rightarrow$(\rmnum{3})]{AFH}.
\end{enumerate}

Our main theorems in this paper are the followings:

\begin{thm}[Theorem \ref{ThmF-in} and Theorem \ref{ThmF-r}]
Let $(R,\mathfrak{m})$ be a Noetherian local ring of characteristic $p>0$, $I\subseteq R$ an ideal such that $\pd_R R/I<\infty$. Then
\begin{enumerate}
\item[(1)] if $R$ is Cohen-Macaulay, and $R/I$ is $F$-injective, then $R$ is $F$-injective.
\item[(2)] if $R$ is excellent, and $R/I$ is $F$-rational, then $R$ is $F$-rational.
\end{enumerate}
\end{thm}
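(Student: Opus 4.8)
The plan is to reduce both parts, via the two Aberbach-type theorems of Sections~3 and~4 (which propagate Frobenius-closedness, respectively tight-closedness, of the ideal generated by a maximal regular sequence on $R/I$ to \emph{all} regular sequences on $R$), to the standard descriptions of F-injectivity and F-rationality in terms of closures of parameter ideals.

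For (1), I would first record the characterization that a Cohen--Macaulay local ring $(A,\mathfrak{n})$ of characteristic $p$ is F-injective if and only if every parameter ideal of $A$ is Frobenius closed. With $d=\dim A$ and a system of parameters $x_1,\dots,x_d$, one has $H^d_{\mathfrak{n}}(A)=\varinjlim_n A/(x_1^n,\dots,x_d^n)$ with injective transition maps (Cohen--Macaulayness gives colon-capturing), the natural Frobenius action sends the class of $z$ at level $n$ to the class of $z^q$ at level $nq$, and a colon computation for regular sequences shows this latter class vanishes precisely when $z^q\in(x_1^n,\dots,x_d^n)^{[q]}$; hence Frobenius is injective on $H^d_{\mathfrak{n}}(A)$ exactly when every $(x_1^n,\dots,x_d^n)$ is Frobenius closed. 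Since $R$ is Cohen--Macaulay, its parameter ideals are precisely the ideals generated by maximal regular sequences, so it suffices to prove that every maximal regular sequence on $R$ is Frobenius closed; by the first main theorem of Section~3 this in turn follows once we know every maximal regular sequence on $R/I$ is Frobenius closed.

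This last point is where the hypothesis that $R/I$ is F-injective is used: although $\pd_R R/I<\infty$ need not make $R/I$ Cohen--Macaulay, an F-injective Noetherian local ring has every ideal generated by a filter-regular (in particular, a maximal regular) sequence Frobenius closed -- this is proved by the same local-cohomology argument as above, with the top local cohomology replaced by $H^t_{\mathfrak{n}}$ for $t$ the depth, using that this module is recovered as the $\mathfrak{n}$-torsion of the top \v{C}ech cohomology along such a sequence. Granting this, the input needed for the Section~3 theorem is in hand and (1) follows. I expect this transfer step -- extracting the required closure statement from F-injectivity of the possibly non-Cohen--Macaulay ring $R/I$, and reconciling the ``parameter ideal'' versus ``maximal regular sequence'' formulations on $R$ and on $R/I$ -- to be the main obstacle.

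For (2) the situation is more rigid. Since $R/I$ is F-rational it is Cohen--Macaulay, and since $\pd_R R/I<\infty$, the generalized deformation theorem for Cohen--Macaulayness \cite[Theorem~2.1]{AFH} gives that $R$ is Cohen--Macaulay. Because $R/I$ is Cohen--Macaulay, its parameter ideals are exactly the ideals generated by maximal regular sequences, and F-rationality says each of these is tightly closed; moreover a quotient of the excellent ring $R$ is again excellent, so $R/I$ is excellent and carries the (completely stable) test elements that furnish the ``mild assumption'' under which the Section~4 theorem upgrades the Frobenius-closure conclusion to a tight-closure one. Applying that theorem, every regular sequence on $R$, and hence every parameter ideal of $R$, is tightly closed; since $R$ is Cohen--Macaulay this is exactly the assertion that $R$ is F-rational. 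The only additional care is to confirm that the excellence (equivalently, test-element) hypotheses of the Section~4 result are met by $R$ and $R/I$ and behave well under any completion used in its proof.
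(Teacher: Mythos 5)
Your proposal follows essentially the same route as the paper for both parts: for (1), reduce to the characterization of F-injectivity of a Cohen--Macaulay local ring via Frobenius closure of parameter ideals, obtain Frobenius closure of all maximal regular sequences on $R/I$ from F-injectivity of $R/I$, and then propagate to $R$ via Proposition~\ref{prop1}; for (2), use [AFH, Theorem~2.1] to see $R$ is Cohen--Macaulay, note $R/I$ is Cohen--Macaulay with tightly closed parameter ideals, and propagate to $R$ via Proposition~\ref{prop2}.

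The one place where your writeup is weaker than the paper's is the ``transfer step'' in (1), which you correctly identify as the crux: you try to re-derive the statement that an F-injective Noetherian local ring has all maximal regular sequences Frobenius closed by a vague appeal to $H^t_{\mathfrak n}$ at $t=\depth$ and ``the $\mathfrak n$-torsion of the top \v Cech cohomology along a filter-regular sequence,'' which does not obviously close up into a proof when $R/I$ is not Cohen--Macaulay. The paper sidesteps this by citing \cite[Proposition~3.13]{QS} (stated there with an F-finiteness hypothesis) and noting that the F-finiteness can be dropped using \cite[Theorem~4.13]{MP}, which gives localization of F-injectivity without F-finiteness; that is the clean way to fill the gap. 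For (2), your remarks about test elements and excellence of $R/I$ are harmless but somewhat off-target: the excellence hypothesis in Proposition~\ref{prop2} is used for $R$ (via persistence of tight closure in Lemma~\ref{lemDT}), not for manufacturing test elements on $R/I$.
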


\begin{thm}[Theorem \ref{r}, Theorem \ref{s}, Theorem \ref{sandr}, Corollary \ref{reduced}, Corollary \ref{normal} and Theorem \ref{domain}]
Let $(R,\mathfrak{m})$ be a Noetherian local ring, $I\subseteq R$ an ideal such that $\pd_R R/I<\infty$. Then
\begin{enumerate}
\item[(1)] if $R$ is equidimensional and catenary, $I$ is a perfect ideal and $R/I$ is $R_k$, then $R$ is $R_k$.
\item[(2)] if $R/I$ is $S_k$, then $R$ is $S_k$.
\item[(3)] if $R/I$ is $R_k$ and $S_{k+1}$, then $R$ is $R_k$ and $S_{k+1}$.
\item[(4)] if $R/I$ is reduced, then $R$ is reduced.
\item[(5)] if $R/I$ is normal, then $R$ is normal.
\item[(6)] if $R/I$ is a domain, then $R$ is a domain.
\end{enumerate}
\end{thm}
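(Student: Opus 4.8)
The plan is to handle all six parts together, after reducing to the case that $R$ is complete: completion preserves finiteness of $\pd_R R/I$, replaces $R/I$ by $\widehat{R/I}=\widehat R/I\widehat R$ (which retains the relevant property), and preserves each property of $R$ in question as well as "equidimensional and catenary"; moreover a complete local ring is excellent and a homomorphic image of a regular ring, hence universally catenary, which I use freely. The two homological inputs will be: (i) the Auslander--Buchsbaum formula, which for every prime $\mathfrak p\supseteq I$ (note $\pd_{R_\mathfrak p}(R/I)_\mathfrak p<\infty$) gives $\depth R_\mathfrak p=\depth (R/I)_\mathfrak p+\pd_{R_\mathfrak p}(R/I)_\mathfrak p$; and (ii) the consequence of the New Intersection Theorem that a nonzero finitely generated module $M$ of finite projective dimension satisfies $\dim R-\dim M\le \pd_R M$, applied to $M=(R/I)_\mathfrak p$ to give $\dim R_\mathfrak p\le \dim(R/I)_\mathfrak p+\pd_{R_\mathfrak p}(R/I)_\mathfrak p$ for $\mathfrak p\supseteq I$. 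I also use the already-cited generalized deformations of regularity and of Cohen--Macaulayness.

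For (2): given $\mathfrak p\in\spec R$ I must bound $\depth R_\mathfrak p$ below by $\min(k,\dim R_\mathfrak p)$. If $\mathfrak p\supseteq I$, combine (i), the hypothesis $\depth(R/I)_\mathfrak p\ge\min(k,\dim(R/I)_\mathfrak p)$, the identity $\min(a,b)+c=\min(a+c,b+c)$, and (ii) to get $\depth R_\mathfrak p\ge\min\bigl(k+\pd_{R_\mathfrak p}(R/I)_\mathfrak p,\ \dim(R/I)_\mathfrak p+\pd_{R_\mathfrak p}(R/I)_\mathfrak p\bigr)\ge\min(k,\dim R_\mathfrak p)$. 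If $\mathfrak p\not\supseteq I$, let $\mathfrak q$ be a prime minimal over $I+\mathfrak p$, so $\mathfrak q\supseteq I$; then $\depth R_\mathfrak p\ge\depth R_\mathfrak q-\dim(R_\mathfrak q/\mathfrak p R_\mathfrak q)$, and combining with the bound just obtained at $\mathfrak q$, the inequality $\dim(R_\mathfrak q/\mathfrak p R_\mathfrak q)\le\dim R_\mathfrak q-\dim R_\mathfrak p$, and (ii) at $\mathfrak q$, one descends the Serre condition from $\mathfrak q$ to $\mathfrak p$. For (3): I use the elementary characterization that a Noetherian local ring $A$ is $R_k$ and $S_{k+1}$ iff $A_\mathfrak p$ is regular whenever $\depth A_\mathfrak p\le k$. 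Granting $S_{k+1}$ for $R$ from (2), it remains, for $\mathfrak p$ with $\depth R_\mathfrak p\le k$, to show $R_\mathfrak p$ regular: if $\mathfrak p\supseteq I$, (i) forces $\depth(R/I)_\mathfrak p\le k$, so $(R/I)_\mathfrak p$ is regular by the characterization applied to $R/I$, whence $R_\mathfrak p$ is regular by the generalized deformation of regularity; if $\mathfrak p\not\supseteq I$, pass to a prime minimal over $I+\mathfrak p$ and use that regularity --- unlike a depth bound --- descends freely under localization. Then (5) is one line: $R/I$ normal $\Leftrightarrow$ $R/I$ is $R_1$ and $S_2$ $\Rightarrow$ $R$ is $R_1$ and $S_2$ by (3) $\Leftrightarrow$ $R$ normal.

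For (4) and (6) I would induct on $c:=\pd_R R/I$, the base $c=0$ being $I=0$. If $c\ge1$ and $I$ contains a nonzerodivisor $f$ (necessarily $f\in\mathfrak m$ with $f(R/I)=0$), the change of rings formula $\pd_{R/fR}(R/I)=c-1$ lets the inductive hypothesis applied to $(R/fR,\ I/fR)$ conclude that $R/fR$ is reduced (resp. a domain), and then $R$ is reduced (resp. a domain) by the classical deformation results. It remains to treat $\grade I=0$, i.e. $I\subseteq\mathfrak p$ for some $\mathfrak p\in\ass R$: by (i) with $\depth R_\mathfrak p=0$, $(R/I)_\mathfrak p$ is $R_\mathfrak p$-free, hence (being cyclic) of rank $1$, so the Euler characteristic of a finite free resolution of $R/I$ equals $1$; localizing at the minimal primes of $R$ then gives $I_\mathfrak q=0$ for every minimal $\mathfrak q$, so $\operatorname{Ann}_R I$ meets no minimal prime; since $R$ is $S_1$ by (2) we have $\ass R=\operatorname{Min} R$, so $\operatorname{Ann}_R I$ contains a nonzerodivisor killing $I$, forcing $I=0$. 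This settles (4); for (6), $R$ is reduced by (4), the $\grade I=0$ case again gives $I=0$ by the same argument, and the $\grade I\ge1$ case was handled by the induction. (The same $\pd$-induction, using the classical deformations of $S_k$ and of "$R_k$ and $S_{k+1}$", can alternatively be used to reduce (2) and (3) to the case $\grade I=0$.) Finally (1): the hypotheses that $R$ is equidimensional and catenary and that $I$ is perfect make $I$ unmixed and $R/I$ equidimensional and catenary, so $\height\mathfrak p=\height(\mathfrak p/I)+\height I$ holds for all $\mathfrak p\supseteq I$; the "$\mathfrak p\supseteq I$" case then runs as in (3) (low-height primes of $R$ over $I$ correspond to low-height, hence regular, primes of $R/I$, and the generalized deformation of regularity applies), and perfectness supplies the uniform control of heights needed to descend to the primes $\mathfrak p\not\supseteq I$.

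The hard part, uniformly across all six statements, is the behaviour at primes $\mathfrak p$ not containing $I$: there $R/I$ localizes to zero and carries no information, so one must recover the local property of $R_\mathfrak p$ by passing up to a prime minimal over $I+\mathfrak p$ and descending. Making that descent precise --- tracking how $\depth$ and $\height$ change under this up-then-down maneuver, and seeing that catenariness of the complete ring (and, in (1), its equidimensionality together with perfectness of $I$) is exactly what makes the bookkeeping close --- is the crux; the secondary technical point is dispatching the degenerate case $\grade I=0$, which I expect, as above, to collapse to $I=0$.
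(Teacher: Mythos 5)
Your outline is on target for the overall strategy of (1)--(3) (case split on whether $\mathfrak p$ contains $I$, Auslander--Buchsbaum plus the New Intersection Theorem in the $I\subseteq\mathfrak p$ case), and your treatment of (4) and (6) is a genuinely different and interesting route, but there is a real gap in the ``$\mathfrak p\not\supseteq I$'' half of (2) and (3). The inequalities you list --- $\depth R_\mathfrak p\ge\depth R_\mathfrak q-\dim(R_\mathfrak q/\mathfrak p R_\mathfrak q)$, the coarse bound $\dim(R_\mathfrak q/\mathfrak p R_\mathfrak q)\le\dim R_\mathfrak q-\dim R_\mathfrak p$, and your item~(ii) $\dim R_\mathfrak q\le\dim(R/I)_\mathfrak q+\pd_{R_\mathfrak q}(R/I)_\mathfrak q$ --- combine to give only $\depth R_\mathfrak p\ge\min(k-\dim(R/I)_\mathfrak q,0)+\dim R_\mathfrak p$, which fails to reach $\min(k,\dim R_\mathfrak p)$ whenever $\dim(R/I)_\mathfrak q$ exceeds $\max(k,\dim R_\mathfrak p)$, and there is nothing in your list that rules that out. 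What is missing is a \emph{second} New Intersection Theorem application, distinct from (ii): tensoring the finite free resolution of $R/I$ with $R/\mathfrak p$ and invoking NIT over $R/\mathfrak p$ gives $\dim R/\mathfrak p\le\pd_R R/I$ whenever $\mathfrak p+I$ is $\mathfrak m$-primary and $I\nsubseteq\mathfrak p$ (the paper's Lemma~5.6). Localized at $\mathfrak q$, this gives $\dim(R_\mathfrak q/\mathfrak p R_\mathfrak q)\le\pd_{R_\mathfrak q}(R/I)_\mathfrak q$, which combined with Auslander--Buchsbaum yields $\depth(R/I)_\mathfrak q\le\depth R_\mathfrak p$; and even then, to finish (2) the paper does not chase the inequality directly but restricts to $\depth R_\mathfrak p<k$, concludes $(R/I)_\mathfrak q$ is Cohen--Macaulay, and then invokes the generalized deformation of Cohen--Macaulayness (AFH) to descend to $R_\mathfrak q$ and then $R_\mathfrak p$. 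That CM-deformation step, absent from your outline, appears to be essential. The same missing ingredient infects your (3) sketch (you would need $\depth(R/I)_\mathfrak q\le k$, which requires the same lemma) and your (1) sketch, whose ``perfectness supplies the uniform control of heights'' is exactly this bound in disguise.

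A secondary flaw: the opening ``reduce to the complete case'' step is unjustified for several of the properties. Without excellence hypotheses, completion does \emph{not} preserve reducedness, normality, $R_k$, or integrality of $R/I$ (a complete local ring quotient of a nodal domain is not a domain); it does preserve $S_k$. Fortunately none of your actual arguments seem to use completeness in an essential way, so this is cosmetic, but as written it is false.

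On the positive side, your proof of (4) and (6) is correct and genuinely different from the paper's. The paper gets (4) as the $k=0$ case of (3) (reduced $=R_0+S_1$) and gets (6) from (4) plus Auslander's Zerodivisor Theorem. You instead induct on $\pd_R R/I$: if $\grade I\ge1$ you strip off a nonzerodivisor $f\in I$, use Nagata's change-of-rings $\pd_{R/fR}(R/I)=\pd_R(R/I)-1$, the inductive hypothesis, and classical deformation; if $\grade I=0$ you use Auslander--Buchsbaum at an associated prime to make $(R/I)_\mathfrak p$ free of rank~$1$, propagate rank~$1$ to every minimal prime via the Euler characteristic of the finite free resolution, and combine with $S_1$ of $R$ (from (2)) and prime avoidance to force $I=0$. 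This sidesteps the Zerodivisor Theorem entirely (replacing it with an Euler-characteristic argument), which is a nice elementary alternative --- though note it still leans on (2), so the gap above would need to be filled first.
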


This paper is organized as followings:

\smallskip
In section 2, we introduce the basic definitions and notations that are needed in our paper. In section 3 we aim to prove two important propositions \ref{prop1} and \ref{prop2} that will be used in the proofs of our two theorems in section 4. To achieve this, we apply the methods in the proof of \cite[Lemma 1.2]{A} to the case of Frobenius closure, then use the method of proving (6)$\Rightarrow$(2) in \cite[Theorem 1.1]{A} to get Proposition \ref{prop1}. We use a similar method to get Proposition \ref{prop2}. After all preparations, in section 4 we complete the proofs of our main theorems for $F$-singularities. In section 5 we firstly prove two lemmas (i.e. Lemma \ref{new1} and Lemma \ref{new2}) to help to solve the case of $I\nsubseteq P$ for $R_k$ and $S_k$ where $P$ is a prime ideal of $R$, then we prove three theorems about Serre's conditions $R_k$,  $S_k$ and $R_k+S_{k+1}$ in the deformation problem, which help to solve the cases for normal rings and reduced rings, and at last we use Auslander's Zerodivisor Conjecture (which is a theorem by Robert's work \cite{R2}) to prove the theorem about domains in the generalized deformation problem.
\bigskip

\section{Preliminaries}
\begin{defn}
Let $p$ be a prime number and $R$ a commutative ring such that $\mathbb{F}_p=\mathbb{Z}/p\mathbb{Z}\subseteq R$. We say that $R$ has characteristic $p$. Let $F:R\rightarrow R$ be defined as $F(r)=r^p$ for any $r\in R$, then it is a ring endomorphism, called the Frobenius endomorphism. Let $\mathcal{F}$ be Peskine-Szpiro's Frobenius base change functor from $R$-modules to $R$-modules, and denote $\mathcal{F}^e$ as the $e$-fold iterated composition of $\mathcal {F}$.
\end{defn}

\begin{rem}
\begin{enumerate}
\item[(1)] $\mathcal{F}^e(R)=R$ for any $e>0$.

\item[(2)] $\mathcal{F}^e$ commutes with arbitrary direct sum and arbitrary direct limits.

\item[(3)] $\mathcal{F}^e(R/I)=R/I^{[q]}$, where $q=p^e$ and $I^{[q]}=\{x^q\arrowvert \,x\in I\}$.

\item[(4)] For every $R$-module $M$ the natural map $\varphi:M\rightarrow \mathcal{F}^e(M)$ is such that for all $r\in R$ and all $u\in M$, $\varphi(ru)=r^q\varphi(u)$. Because of this, we shall use $u^q$ to denote the image of $u$ under this map.
\end{enumerate}
\end{rem}

\begin{defn}\label{defn2.3}
Let $N\subseteq M$ be two $R$-modules, notice that the map $\mathcal{F}^e(N)\rightarrow \mathcal{F}^e(M)$ is the base change $F^e:R\rightarrow R$ of the inclusion map $N\hookrightarrow M$. The map $\mathcal{F}^e(N)\rightarrow \mathcal{F}^e(M)$ may not be injective, and we denote the image of $\mathcal{F}^e(N)$ of the induced map as $N_M^{[q]}$ or $N^{[q]}$ if $M$ is clear from context. Let $\psi:N\rightarrow M$ be an $R$-linear map, then the induced map $\psi^{[q]}:\mathcal{F}^e(N)\rightarrow \mathcal{F}^e(M)$ is defined as
$$\psi^{[q]}(x)=(\psi(x))^q.$$
\end{defn}

\begin{defn}
Let $R$ be a Noetherian ring of characteristic $p>0$, and  $R^{\circ}$ be the complement of the union of all minimal primes of $R$. If $I$ is an $R$-ideal, we define the tight closure $I^*$ of $I$ to consist all elements $x\in R$ such that there exists $c\in R^{\circ}$ such that $cx^q\in I^{[q]}$ for all $q\gg 0$. Obviously $I\subseteq I^*$ since we can take $c=1$, and an ideal $I$ is called tightly closed if $I^*=I$. For $R$-modules $N\subseteq M$, the tight closure $N_M^*$ of $N$ in $M$ consists of elements $u\in M$ such that for some $c\in R^{\circ}$,
$$cu^q\in N_M^{[q]}\subseteq \mathcal{F}^e(M)$$
for all $q\gg 0$. Similarly the Frobenius closure $I^F$ of $I$ is the set of all elements $x\in R$ such that there exists $q=p^e>0$ such that $x^q\in I^{[q]}$. An ideal $I$ is called Frobenius closed if $I^F=I$. Sometimes we may say that a sequence is tightly closed or Frobenius closed to mean the ideal generated by the sequence is tightly closed or Frobenius closed for simplicity.
\end{defn}

\begin{defn}
For an $n\times m$ matrix $\varphi$ with entries in $R$, we write $I_t(\varphi)$ for the $R$-ideal generated by all $t\times t$ minors of $\varphi$. We use the convention that $I_t(\varphi)=R$ for $t\leqslant 0$ and $I_t(\varphi)=0$ for $t>\min\{m,n\}$. We define the rank of $\varphi$ to be $\rank(\varphi)=\max\{r\arrowvert \,I_r(\varphi)\neq 0\}$.
\end{defn}

\begin{defn} Let $R$ be a ring and $M$ an $R$-module, then we define the dual of $M$ to be $\Hom_R(M,R)$, denoted as $M^{\vee}$. If we have $\varphi: R^m\rightarrow R^n$ an $R$-linear map, then there is an induced map $\varphi^{\vee}: (R^n)^{\vee}\rightarrow (R^m)^{\vee}$ where $\varphi^{\vee}$ is the transpose of $\varphi$ as matrices. Note that $(R^l)^{\vee}$ is isomorphic to $R^l$ for any $l>0$.
\end{defn}

\begin{defn}\label{def2.7}
We say that a Noetherian local ring $(R,\mathfrak{m})$ is $F$-rational if it is a homomorphic image of a Cohen-Macaulay ring and every ideal generated by a system of parameters is tightly closed.
\end{defn}

\begin{rem}\label{rem2.8}
Actually an $F$-rational local ring is Cohen-Macaulay (see, e.g., \cite[Theorem on Page 125]{H}).
\end{rem}

Let us recall the Frobenius action on the top local cohomology. Let $(R,\mathfrak{m})$ be a Noetherian local ring of characteristic $p$, and $\dim R=d$. Let $x_1,\ldots,x_d$ be a system of parameters of $R$, then the top local cohomology module $H_{\mathfrak{m}}^d(R)$ is isomorphic to
\begin{equation*}
\frac{R_{x_1\ldots x_d}}{\sum_i \im\,(R_{x_1\ldots \widehat{x_i}\ldots x_d})},
\end{equation*}
where $\sum_i \im\,(R_{x_1\ldots \widehat{x_i}\ldots x_d})$ is the image of the last map
\begin{equation*}
R_{x_2\ldots x_d}\oplus\ldots\oplus R_{x_1\ldots x_{d-1}}\longrightarrow R_{x_1\ldots x_d}
\end{equation*}
in the \v{C}ech complex. An element $y\in H_{\mathfrak{m}}^d(R)$ is the homology class of $\frac{a}{x^t}$, denoted as $[\frac{a}{x^t}]$, where $a\in R$ and $x=x_1\ldots x_d$. Notice that $F([\frac{a}{x^t}])=[\frac{a^p}{x^{tp}}]$, and if $R$ is Cohen-Macaulay, then $[\frac{a}{x^t}]=0$ if and only if $a\in (x_1^t,\ldots,x_d^t)$. The Frobenius actions on other local cohomologies can be described explicitly using \v{C}ech complex in a similar way.

\begin{rem}\label{rem2.9}
Smith gave an important characterization of $F$-rationality in terms of local cohomology, i.e. a $d$-dimensional Noetherian local ring $(R,\mathfrak{m})$ is $F$-rational if and only if $R$ is Cohen-Macaulay and $0_{H_{\mathfrak{m}}^d(R)}^*=0$.
\end{rem}

\begin{defn}
A Noetherian local ring $(R,\mathfrak{m})$ of charateristic $p>0$ is called $F$-injective if the natural Frobenius action on $H_{\mathfrak{m}}^i(R)$ is injective for all $i$.
\end{defn}

\begin{rem}\label{rem2.11}
Note that $F$-rational implies $F$-injective by \cite[Proposition A.3.(ii)]{DM}.
\end{rem}

\begin{defn}
A Noetherian local ring $R$ is said to be equidimensional if for $P$ to be any minimal prime ideal of $R$, we have $\dim\,R/P=\dim\,R$.
\end{defn}

\begin{defn}
A Noetherian local ring $R$ is said to be catenary if for any two prime ideals $P\subsetneq Q$ in $R$, there is a chain of prime ideals $P=P_0\subsetneq P_1\subsetneq\ldots\subsetneq P_n=Q$ that cannot be refined any further, and any such chain has the same length.
\end{defn}

\begin{prop}\label{e+c}\cite[Lemma 2 on Page 250]{M}
Let $(R,\mathfrak{m})$ be a Noetherian local ring, and $R$ is equidimensional and catenary, then for any $P\in \spec(R)$, $\dim\,R=\height\,P+\dim\,R/P$.
\end{prop}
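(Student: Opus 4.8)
The plan is to reduce the statement to bookkeeping about chains of prime ideals, using the catenary hypothesis in the form ``all non-refinable chains between two fixed comparable primes have the same length'' and the equidimensional hypothesis to pin down the length of a maximal chain. Write $d=\dim R$. The inequality $\height P+\dim R/P\le d$ is automatic --- splice a chain from a minimal prime up to $P$ realizing $\height P$ with one from $P$ up to $m$ realizing $\dim R/P$, the latter being legitimate since $R$ is local, so such chains end at $m$ --- so it suffices to produce the reverse inequality, and in fact the argument below yields equality directly.

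The key step is the claim that for every minimal prime $\mathfrak p$ of $R$, every non-refinable chain of primes from $\mathfrak p$ to $m$ has length exactly $d$. To prove this I would pass to $R/\mathfrak p$, which is a Noetherian local domain and is catenary because quotients of catenary rings are catenary. A chain of primes in $R/\mathfrak p$ of maximal length must start at $(0)$ and end at $m/\mathfrak p$ (otherwise it could be extended, using that $m/\mathfrak p$ is the unique maximal ideal), so it is a non-refinable chain between $(0)$ and $m/\mathfrak p$; by the catenary condition its length is the common length of all such chains, which is therefore $\dim R/\mathfrak p$, and this equals $d$ by equidimensionality. Transporting back to $R$, every non-refinable chain from $\mathfrak p$ to $m$ has length $d$.

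Now fix an arbitrary $P\in\spec(R)$ and a minimal prime $\mathfrak p\subseteq P$. Pick a non-refinable chain from $\mathfrak p$ to $P$ of length $a$ and a non-refinable chain from $P$ to $m$ of length $b$; concatenating gives a non-refinable chain from $\mathfrak p$ to $m$, so $a+b=d$ by the previous paragraph. By the catenary condition $b$ is independent of the chosen chain and equals the length of the longest chain from $P$ to $m$, namely $\dim R/P$ (using again that $R$ is local); hence $a=d-\dim R/P$, the same value for every minimal prime $\mathfrak p\subseteq P$. Since $\height P$ is the supremum over minimal primes $\mathfrak p\subseteq P$ of the length of the longest chain from $\mathfrak p$ to $P$, and each such length is the corresponding $a$ by the catenary condition, we obtain $\height P=d-\dim R/P$, that is, $\dim R=\height P+\dim R/P$.

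The only genuinely delicate point is the careful distinction between ``non-refinable (saturated)'' chains and ``longest'' chains: the catenary hypothesis is precisely what makes these coincide once the two endpoints are fixed, and the local hypothesis is what lets every maximal chain of primes be taken to run from a minimal prime all the way up to $m$. Once these two normalizations are in place, the proof is just splicing chains together.
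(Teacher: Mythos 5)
Your proof is correct. Note that the paper does not prove this proposition itself but simply cites Matsumura (\emph{Commutative Ring Theory}, Lemma 2, p.~250); your chain-splicing argument is the standard textbook proof of this fact, and the two normalizations you flag at the end --- that between fixed comparable primes a longest chain is automatically saturated, and that catenary then forces all saturated chains with those endpoints to share that length --- are precisely what make the splicing work at each of the three scales you consider ($\mathfrak p$ to $m$, $P$ to $m$, and $\mathfrak p$ to $P$).
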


\begin{defn}
Let $R$ be a Noetherian ring and $I$ an $R$-ideal, then $I$ is said to be perfect if $\grade\,I=\pd_R R/I$.
\end{defn}

\begin{defn}
Let $R$ be a Noetherian ring, $k\geqslant 0$ is an integer, then $R$ satisfies Serre's condition $R_k$ if $R_P$ is regular whenever $P\in\spec(R)$ and $\dim\,R_P\leqslant k$. And $R$ satisfies Serre's condition $S_k$ if $\depth\,R_P\geqslant\min\{k,\dim\,R_P\}$ for any $P\in\spec(R)$.
\end{defn}
\bigskip

\section{Proof of the Main Propositions}
In this section we prove our main propositions \ref{prop1} and \ref{prop2}. We first prove Theorem \ref{LG(1)(2)} which can be regarded as an analog of the following result of Aberbach for Frobenius closure.

\begin{thm}\cite[Lemma 1.2]{A}\label{Alem1.2}
Let $R$ be a Noetherian ring of characteristic $p>0$. Let $(L_{\bullet},a_{\bullet})$ and $(G_{\bullet},b_{\bullet})$ be complexes of finitely generated free modules of length $d$ where $L_{\bullet}$ is acyclic. If $\phi_{\bullet}:G_{\bullet}\rightarrow L_{\bullet}$ is a chain map and $\im\,\phi_0\subseteq (\im\,a_1)_{L_0}^*$, then
\begin{enumerate}
\item[(1)] $\im\,\phi_d\subseteq (I_1(b_d)L_d)_{L_d}^*$, and
\item[(2)] $\im\,\phi_d^{\vee}\subseteq (\im\,b_d^{\vee})_{G_d^{\vee}}^*$.
\end{enumerate}
\end{thm}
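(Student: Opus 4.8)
The plan is to derive both (1) and (2) from a single identity of Frobenius-twisted maps, $c^{\,d+1}\phi_d^{[q]}=c\,\tau_{d-1}\circ b_d^{[q]}$, obtained by lifting the chain map $\phi_\bullet$ one step at a time up the acyclic complex $L_\bullet$ in a ``phantom'' sense. To set this up I would first record what acyclicity of $L_\bullet$ buys us: by the Buchsbaum--Eisenbud acyclicity criterion, $L_\bullet$ satisfies the standard rank and height conditions $\rank a_i+\rank a_{i+1}=\rank L_i$ and $\height I_{\rank a_i}(a_i)\ge i$; since $I_t(a_i^{[q]})=I_t(a_i)^{[q]}$ has the same radical (hence height) as $I_t(a_i)$ and the ranks are unchanged, every Frobenius power $\mathcal{F}^e(L_\bullet)$ satisfies the same conditions, so by the Hochster--Huneke phantom acyclicity criterion $\mathcal{F}^e(L_\bullet)$ is phantom acyclic. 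In the generality needed --- reducing if necessary to the case where $R$ has a completely stable test element and is a homomorphic image of a Cohen--Macaulay ring, e.g.\ a complete local ring --- one obtains a single $c\in R^\circ$ with $c\cdot\ker a_i^{[q]}\subseteq\im a_{i+1}^{[q]}$ for all $i\ge1$ and all $q$; enlarging $c$ and invoking the hypothesis $\im\phi_0\subseteq(\im a_1)^*_{L_0}$ (using the finiteness of a generating set of $G_0$), we may also assume $\im\bigl(c\,\phi_0^{[q]}\bigr)\subseteq\im a_1^{[q]}$ for all $q\gg0$.

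Fixing such a $q$, I would then build $\tau_i\colon\mathcal{F}^e(G_i)\to\mathcal{F}^e(L_{i+1})$ for $0\le i\le d-1$ by induction on $i$. Take $\tau_0$ with $a_1^{[q]}\tau_0=c\,\phi_0^{[q]}$ (possible since $\mathcal{F}^e(G_0)$ is free and $\im(c\,\phi_0^{[q]})\subseteq\im a_1^{[q]}$). Given $\tau_{i-1}$, the chain-map relations $a_i\phi_i=\phi_{i-1}b_i$, the complex identities $b_{i-1}b_i=0$, and (at the first step) $a_1^{[q]}\tau_0=c\,\phi_0^{[q]}$ force $a_i^{[q]}\bigl(c^{\,i}\phi_i^{[q]}-\tau_{i-1}b_i^{[q]}\bigr)=0$, so this map has image in $\ker a_i^{[q]}$; multiplying by $c$ throws the image into $\im a_{i+1}^{[q]}$, and projectivity of $\mathcal{F}^e(G_i)$ produces $\tau_i$ with $a_{i+1}^{[q]}\tau_i=c^{\,i+1}\phi_i^{[q]}-c\,\tau_{i-1}b_i^{[q]}$. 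Running the same computation one step past the top, $c\bigl(c^{\,d}\phi_d^{[q]}-\tau_{d-1}b_d^{[q]}\bigr)$ has image in $c\cdot\ker a_d^{[q]}\subseteq\im a_{d+1}^{[q]}=0$, i.e.\ $c^{\,d+1}\phi_d^{[q]}=c\,\tau_{d-1}\circ b_d^{[q]}$.

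Both conclusions now drop out. For (1): for $v\in G_d$ the vector $b_d^{[q]}(v^q)$ has all coordinates in $I_1(b_d)^{[q]}$, hence $c^{\,d+1}\phi_d(v)^q=c\,\tau_{d-1}\bigl(b_d^{[q]}(v^q)\bigr)\in I_1(b_d)^{[q]}\mathcal{F}^e(L_d)=(I_1(b_d)L_d)^{[q]}_{L_d}$; as $c^{\,d+1}\in R^\circ$, this says $\phi_d(v)\in(I_1(b_d)L_d)^*_{L_d}$. For (2): applying $\Hom_R(-,R)$ to the identity and using $(\psi^{[q]})^\vee=(\psi^\vee)^{[q]}$ for maps of finite free modules gives $c^{\,d+1}(\phi_d^\vee)^{[q]}=(b_d^\vee)^{[q]}\circ(c\,\tau_{d-1})^\vee$, whose image lies in $\im\bigl((b_d^\vee)^{[q]}\bigr)=(\im b_d^\vee)^{[q]}_{G_d^\vee}$; hence $\phi_d^\vee(\xi)\in(\im b_d^\vee)^*_{G_d^\vee}$ for every $\xi\in L_d^\vee$.

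The step I expect to be the main obstacle is the one taken on faith above: that a single $c\in R^\circ$ can be chosen so that $c\cdot\ker a_i^{[q]}\subseteq\im a_{i+1}^{[q]}$ for all $i$ and all $q$ simultaneously. This uniform phantom acyclicity is the substantive point; it follows from the Hochster--Huneke phantom acyclicity criterion combined with the existence of a completely stable test element, which is why one first reduces to a well-behaved ring (a complete local ring, say), and that reduction itself needs a little care because tight closure of modules does not in general commute with localization. Everything else --- the liftings, which exist for purely formal reasons, and the bookkeeping of the powers of $c$ together with merging the various ``test'' factors into one element of $R^\circ$ valid for all large $q$ --- is routine once those uniform statements are in hand.
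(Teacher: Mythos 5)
Your proposal arrives at the right structure (a null‑homotopy with a test multiplier, then read off (1) directly and (2) by dualizing), but it routes through much heavier machinery than is needed, and the detour introduces a real worry. The simplification you miss is that $\mathcal{F}^e(L_\bullet)$ is \emph{actually} acyclic, not merely phantom acyclic. You checked rank and \emph{height} preservation and then invoked the Hochster--Huneke phantom criterion; but the Buchsbaum--Eisenbud criterion is a \emph{grade} condition, $\grade\bigl(I_{r_i}(a_i),R\bigr)\geqslant i$, and grade is preserved too: $I_{r_i}(a_i^{[q]})=I_{r_i}(a_i)^{[q]}$ has the same radical as $I_{r_i}(a_i)$ and hence the same grade (equivalently, any regular sequence inside $I_{r_i}(a_i)$ stays regular after raising to $q$-th powers). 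So by Buchsbaum--Eisenbud, $\mathcal{F}^e(L_\bullet)$ is an acyclic complex of finite free modules over \emph{any} Noetherian ring of characteristic $p$. Once you have genuine acyclicity, there is no need for a completely stable test element or a reduction to a complete local ring: by finite generation of $G_0$ the hypothesis $\im\phi_0\subseteq(\im a_1)^*_{L_0}$ gives a single $c\in R^\circ$ with $\im\bigl(c\,\phi_0^{[q]}\bigr)\subseteq\im a_1^{[q]}$ for all $q\gg 0$, and then $c\,\phi_\bullet^{[q]}$ is a chain map from a complex of projectives into an acyclic complex that induces zero on $H_0$, hence is null‑homotopic; from $c\,\phi_d^{[q]}=\psi\circ b_d^{[q]}$ both conclusions follow at once, with a single factor of $c$ rather than $c^{d+1}$. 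This is exactly the mechanism the paper uses in its Frobenius‑closure analog (Theorem \ref{LG(1)(2)}), and it is what Aberbach's cited proof does as well.

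The point is not merely economy: your reduction ``to a complete local ring, say'' so that a completely stable test element is available is not harmless. Tight closure of modules does not in general commute with completion or localization, and the theorem is stated over an arbitrary Noetherian ring of characteristic $p$ with no excellence hypothesis, so the reduction step you acknowledge ``needs a little care'' is in fact the place where the argument can genuinely break. The direct route via Buchsbaum--Eisenbud acyclicity sidesteps this entirely and works at the level of generality claimed. So I would classify your outline as essentially correct in spirit but overbuilt, with the reduction/test‑element step being the one place that is not actually justified as written; replacing ``phantom acyclicity $+$ test element'' by ``Frobenius preserves acyclicity of finite free complexes'' both fixes the gap and collapses the inductive bookkeeping of powers of $c$ into a single null‑homotopy.
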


To prove its Frobenius closure version Theorem \ref{LG(1)(2)}, we also need the following:

\begin{thm}[Buchsbaum-Eisenbud]\label{BEacyclic}
Let $(R,\mathfrak{m})$ be a Noetherian local ring,
\begin{equation*}
G_{\bullet}:0\rightarrow R^{\oplus b_n}\stackrel{\alpha_n}{\longrightarrow} R^{\oplus b_{n-1}}\rightarrow\ldots\rightarrow R^{\oplus b_1}\stackrel{\alpha_1}{\longrightarrow} R^{\oplus b_0}\rightarrow 0
\end{equation*}
is a finite free complex over $R$. Let $r_i=\rank(\alpha_i)=\max\{r\arrowvert \,I_r(\alpha_i)\neq 0\}$. Then $G_{\bullet}$ is acyclic if and only if the following two conditions hold:
\begin{enumerate}
\item[(1)] $r_i+r_{i+1}=b_i$ for any $i$ such that $1\leqslant i\leqslant n$.
\item[(2)] $\depth_{I_{r_i}(\alpha_i)}R\geqslant i$ for any $i$ such that $1\leqslant i\leqslant n$.
\end{enumerate}
\end{thm}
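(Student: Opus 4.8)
The plan is to deduce this classical criterion of Buchsbaum and Eisenbud from the one-step case, propagating the numerical information up the complex by localization. Three ingredients are needed first. The \emph{single-map criterion}: a homomorphism $\psi\colon R^{a}\to R^{b}$ of finitely generated free modules over a Noetherian ring is injective iff $\rank\psi=a$ and $\depth_{I_{a}(\psi)}R\geqslant 1$; this is McCoy's theorem ($\psi$ injective $\iff\ann I_{a}(\psi)=0$) combined with the fact that over a Noetherian ring an ideal has positive grade exactly when it avoids every associated prime. \emph{Localization}: acyclicity of a finite complex of finitely generated free modules may be tested after localizing at every prime; $\rank\alpha_{i}$ is unchanged under localization at a prime not containing $I_{r_{i}}(\alpha_{i})$; and $\depth_{J}R$ never drops under localization. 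The \emph{Peskine--Szpiro acyclicity lemma}: if $0\to H_{t}\to\cdots\to H_{0}$ is a complex of finitely generated modules over a Noetherian local ring with $\depth H_{i}\geqslant i$ for all $i$, and every homology module in positive degree is zero or of depth zero, then all positive-degree homology vanishes. I would also use the classical vanishing $I_{r}(\alpha_{i})\,I_{s}(\alpha_{i+1})=0$ whenever $\alpha_{i}\alpha_{i+1}=0$ and $r+s>b_{i}$.

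For necessity, assume $G_{\bullet}$ acyclic. Injectivity of $\alpha_{n}$ with the single-map criterion gives $r_{n}=b_{n}$. The identities $r_{i}+r_{i+1}=b_{i}$ follow by localizing at the minimal primes of $R$: over an Artinian local ring a bounded acyclic complex of finitely generated free modules is split exact (it resolves a module of finite projective dimension, which there must be free), so the ranks add there; since ranks cannot grow under localization this gives $\geqslant$, and the reverse inequality comes from the determinantal-product vanishing applied from the top down, using that $\ann I_{r_{i+1}}(\alpha_{i+1})=0$. The grade bounds $\depth_{I_{r_{i}}(\alpha_{i})}R\geqslant i$ I would prove by induction on $n$: deleting $G_{0}$ gives an acyclic complex of length $n-1$ whose bottom cokernel is the submodule $\im\alpha_{1}$ of the free module $G_{0}$; the inductive hypothesis supplies the grade bounds weakened by one, and the missing unit of grade is recovered from that bottom cokernel (a submodule of a free module) via the acyclicity lemma. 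For sufficiency, assume (1) and (2) and induct on $n$: the case $n=1$ is the single-map criterion, and for $n>1$ it suffices, by the locality of acyclicity, to show $(G_{\bullet})_{P}$ acyclic for every prime $P$. If $I_{r_{i}}(\alpha_{i})\not\subseteq P$ for all $i$, then over $R_{P}$ each $\alpha_{i}$ has $I_{r_{i}}=R_{P}$ and rank $r_{i}$, hence is a ``partial identity'' splitting off free summands, and $(G_{\bullet})_{P}$ is split exact. Otherwise, with $i_{0}$ least such that $I_{r_{i_{0}}}(\alpha_{i_{0}})\subseteq P$, condition (2) forces $\depth R_{P}\geqslant i_{0}$; since $(G_{\bullet})_{P}$ is already exact in degrees below $i_{0}$ and the free modules have depth $\geqslant i_{0}$, one checks that any nonzero homology of $(G_{\bullet})_{P}$ would have depth zero, so the acyclicity lemma applies.

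The main obstacle is the bookkeeping around the grade conditions, not the rank conditions, which are essentially formal. In the necessity direction the delicate step is the ``grade boost'': upgrading by one the grade bounds coming from the truncated complex, which is precisely where the acyclicity lemma and the embedding of the relevant syzygy in a free module are used. In the sufficiency direction the delicate step is arranging the depth hypotheses of the acyclicity lemma to hold after localization --- checking that $\depth R_{P}$ is forced to be as large as the index of each free module on the range of degrees that matters. Setting up the single-map criterion and the acyclicity lemma carefully is also part of the work, but with those in place the two inductions run routinely and deliver the theorem.
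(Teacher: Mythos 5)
The paper itself does not prove this statement; Theorem~\ref{BEacyclic} is cited directly as \cite[Corollary 1]{BE} and the author explicitly writes ``This theorem can be found on \cite[Corollary 1]{BE}.'' There is therefore no argument in the paper to compare yours against, and a proof of this classical Buchsbaum--Eisenbud criterion is not something the paper intended to supply.

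That said, since you offered a blind proof, here is where it needs repair. Your sufficiency argument localizes at a prime $P$, takes $i_{0}$ to be the \emph{least} index with $I_{r_{i_{0}}}(\alpha_{i_{0}})\subseteq P$, deduces $\depth R_{P}\geqslant i_{0}$, and then invokes the Peskine--Szpiro acyclicity lemma on $(G_{\bullet})_{P}$. But the acyclicity lemma requires $\depth(G_{i})_{P}\geqslant i$ for \emph{every} $i$ up to $n$, i.e.\ $\depth R_{P}\geqslant n$; knowing only $\depth R_{P}\geqslant i_{0}$ leaves the hypothesis unverified at all positions $i>\depth R_{P}$, and $i_{0}$ being the least such index does nothing to control those. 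The standard fix is to observe that condition~(2) forces $I_{r_{i}}(\alpha_{i})\not\subseteq P$ for every $i>\depth R_{P}$, so those maps have unit $r_{i}\times r_{i}$ minors over $R_{P}$ and can be split off from the \emph{top} of the complex (using the rank identities to see the leftover piece is again a complex of the expected ranks); after enough splittings one is reduced to a complex of length at most $\depth R_{P}$, to which the acyclicity lemma does apply. Relatedly, even once the depth hypothesis is secured, the lemma also asks you to know that each positive homology module is zero or has depth zero --- your sketch asserts ``one checks'' this but does not indicate how; in the usual proof this is handled by the induction on $n$ rather than checked directly. In the necessity direction, your use of the determinantal vanishing $I_{r}(\alpha_{i})I_{s}(\alpha_{i+1})=0$ (for $r+s>b_{i}$) to get $r_{i}+r_{i+1}\leqslant b_{i}$ requires $\ann I_{r_{i+1}}(\alpha_{i+1})=0$; you get this for $i+1=n$ from injectivity of $\alpha_{n}$, but propagating it down to smaller indices needs an argument (e.g.\ that the cokernels $\operatorname{coker}\alpha_{i}$ embed in free modules, hence are torsion-free), which you should make explicit. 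The overall architecture --- McCoy's theorem, localization, the acyclicity lemma, and the determinantal vanishing --- is indeed how the classical proof goes, so the proposal is heading in the right direction, but these steps need to be tightened before it is a proof. For the purposes of this paper, however, the correct move is simply to cite \cite{BE}, as the author does.
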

This theorem can be found on \cite[Corollary 1]{BE}, and we will use it in the proof of Theorem \ref{LG(1)(2)} to show that the Frobenius action on an exact sequence would not change the exactness of the sequence.

\begin{thm}\label{LG(1)(2)}
Let $R$ be a Noetherian ring of characteristic $p>0$. Let $(L_{\bullet},a_{\bullet})$ and $(G_{\bullet},b_{\bullet})$ be complexes of finitely generated free modules of length $d$ where $L_{\bullet}$ is acyclic. If $\phi_{\bullet}:G_{\bullet}\rightarrow L_{\bullet}$ is a chain map and $\im\, \phi_0\subseteq (\im\, a_1)_{L_0}^F$, then
\begin{enumerate}
\item[(1)]$\im\, \phi_d\subseteq (I_1(b_d)L_d)^F$, and
\item[(2)]$\im\, \phi_d^{\vee}\subseteq (\im\, b_d^{\vee})^F$.
\end{enumerate}
\end{thm}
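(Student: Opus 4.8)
The plan is to absorb the Frobenius-closure hypothesis into an ordinary module containment by applying one Frobenius functor $\mathcal{F}^{e_0}$, and then to extract the two conclusions from a concrete chain-homotopy construction, imitating the proof of \cite[Lemma 1.2]{A}. The first ingredient I would need is the behaviour of ideals of minors under $\mathcal{F}^{e}$: for a matrix $M$ over $R$ and $q=p^{e}$, every $t\times t$ minor of $M^{[q]}$ equals the $q$-th power of the corresponding $t\times t$ minor of $M$, because the determinant is a polynomial with coefficients $\pm1$ and in characteristic $p$ one has $(x+y)^{q}=x^{q}+y^{q}$ and $(-1)^{q}=-1$. Hence $I_{t}(M^{[q]})=I_{t}(M)^{[q]}$, which yields $\rank M^{[q]}=\rank M$ and $\sqrt{I_{t}(M^{[q]})}=\sqrt{I_{t}(M)}$, so that $\depth_{I_{t}(M^{[q]})}R=\depth_{I_{t}(M)}R$.

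Next, combining this with the Buchsbaum--Eisenbud criterion (Theorem \ref{BEacyclic}), I would show that $\mathcal{F}^{e}(L_{\bullet})$ is again an acyclic finite free complex of length $d$: if the $a_{i}$ are the differentials of $L_{\bullet}$, then conditions (1) and (2) of Theorem \ref{BEacyclic} for the matrices $a_{i}^{[q]}$ follow from the corresponding conditions for the $a_{i}$ via the rank and depth identities just mentioned (Theorem \ref{BEacyclic} is phrased over a local ring, so one applies it after localizing at each prime, acyclicity being a local condition and $\mathcal{F}^{e}$ acting on matrices by raising entries to the $q$-th power over $R_{P}$ exactly as over $R$). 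This is the step I expect to be the main obstacle. The one delicate point is the equality $\rank a_{i}^{[q]}=\rank a_{i}$, which requires $I_{r_{i}}(a_{i})^{[q]}\neq 0$ for the relevant $r_{i}$; over a possibly non-reduced $R$ this is forced by condition (2) of Theorem \ref{BEacyclic} itself, since for $i\geq 1$ the ideal $I_{r_{i}}(a_{i})$ contains a nonzerodivisor and hence is not a (finitely generated, thus nilpotent) nil ideal, so not all of its generating minors can be nilpotent.

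For the reduction proper, since $\im\phi_{0}$ is a finitely generated submodule of $L_{0}$ with $\im\phi_{0}\subseteq(\im a_{1})^{F}_{L_{0}}$, a single exponent $q_{0}=p^{e_{0}}$ works for all of it; equivalently, applying $\mathcal{F}^{e_{0}}$ to $\phi_{\bullet}$ produces a chain map $\Phi_{\bullet}\colon\mathcal{F}^{e_{0}}(G_{\bullet})\to\mathcal{F}^{e_{0}}(L_{\bullet})$ of finite free complexes of length $d$ whose target is acyclic by the previous paragraph and with $\im\Phi_{0}\subseteq\im(\mathcal{F}^{e_{0}}(a_{1}))$. Writing $\tilde{L}_{\bullet}=\mathcal{F}^{e_{0}}(L_{\bullet})$, $\tilde{a}_{i}=\mathcal{F}^{e_{0}}(a_{i})$, $\tilde{b}_{i}=\mathcal{F}^{e_{0}}(b_{i})$, I would then build maps $s_{i}\colon\mathcal{F}^{e_{0}}(G_{i})\to\tilde{L}_{i+1}$ for $0\leq i\leq d-1$ by successive lifting through the $\tilde{a}_{i+1}$: the lift $s_{0}$ of $\Phi_{0}$ exists because $\mathcal{F}^{e_{0}}(G_{0})$ is free and $\im\Phi_{0}\subseteq\im\tilde{a}_{1}$, and inductively $\Phi_{i}-s_{i-1}\tilde{b}_{i}$ lands in $\ker\tilde{a}_{i}=\im\tilde{a}_{i+1}$ by the chain-map relations and the exactness of $\tilde{L}_{\bullet}$ in positive degrees. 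Since $\tilde{L}_{d+1}=0$ and $\tilde{a}_{d}$ is injective (exactness of $\tilde{L}_{\bullet}$ at $\tilde{L}_{d}$), a short check gives $\Phi_{d}=s_{d-1}\tilde{b}_{d}$. Consequently $\im\Phi_{d}=s_{d-1}(\im\tilde{b}_{d})\subseteq s_{d-1}\bigl(I_{1}(\tilde{b}_{d})\,\mathcal{F}^{e_{0}}(G_{d-1})\bigr)\subseteq I_{1}(\tilde{b}_{d})\tilde{L}_{d}$, and dualizing $\Phi_{d}=s_{d-1}\tilde{b}_{d}$ gives $\Phi_{d}^{\vee}=\tilde{b}_{d}^{\vee}s_{d-1}^{\vee}$, so $\im\Phi_{d}^{\vee}\subseteq\im\tilde{b}_{d}^{\vee}$.

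Finally, I would translate these two containments for $\Phi_{\bullet}$ back to $\phi_{\bullet}$. From $\tilde{b}_{d}=b_{d}^{[q_{0}]}$ one gets $I_{1}(\tilde{b}_{d})=I_{1}(b_{d})^{[q_{0}]}$ and $I_{1}(\tilde{b}_{d})\tilde{L}_{d}=(I_{1}(b_{d})L_{d})^{[q_{0}]}_{L_{d}}$, while $\Phi_{d}=\phi_{d}^{[q_{0}]}$ sends a basis of $\mathcal{F}^{e_{0}}(G_{d})$ to the $q_{0}$-th powers of the generators $\phi_{d}(g)$ of $\im\phi_{d}$; hence $\phi_{d}(g)^{q_{0}}\in(I_{1}(b_{d})L_{d})^{[q_{0}]}_{L_{d}}$ for each such $g$, which is precisely the statement $\im\phi_{d}\subseteq(I_{1}(b_{d})L_{d})^{F}$, giving (1). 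The identical translation applied to $\Phi_{d}^{\vee}=\mathcal{F}^{e_{0}}(\phi_{d}^{\vee})$ and $\tilde{b}_{d}^{\vee}=\mathcal{F}^{e_{0}}(b_{d}^{\vee})$ yields $\im\phi_{d}^{\vee}\subseteq(\im b_{d}^{\vee})^{F}$, giving (2). In both cases one uses that the Frobenius closure of a submodule is itself a submodule (again via $(x+y)^{q}=x^{q}+y^{q}$), so that having each generator of $\im\phi_{d}$, respectively of $\im\phi_{d}^{\vee}$, inside the closure is enough.
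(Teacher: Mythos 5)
Your proposal follows essentially the same route as the paper: apply $\mathcal{F}^{e_0}$ to reduce the Frobenius-closure hypothesis to an ordinary containment, use the Buchsbaum--Eisenbud criterion to show $\mathcal{F}^{e_0}(L_\bullet)$ stays acyclic (via $I_t(M^{[q]})=I_t(M)^{[q]}$), extract $\Phi_d=s_{d-1}\tilde b_d$ from a null-homotopy, and translate the two resulting containments back through the Frobenius powers. The only point where you are slightly more careful than the paper is the justification that $\rank a_i^{[q]}=\rank a_i$: the paper asserts that some $r_i\times r_i$ minor of $a_i$ is itself a nonzerodivisor (which does not literally follow from $\depth_{I_{r_i}(a_i)}R\geq 1$), whereas your argument that $I_{r_i}(a_i)$ cannot be nilpotent, so some minor has nonzero $q$-th power, is airtight; likewise your remark that Theorem~\ref{BEacyclic} as stated is local and one should localize (or invoke the non-local form of Buchsbaum--Eisenbud) addresses a small imprecision in the paper.
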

\begin{proof}
Our proof is similar to the argument in \cite[Lemma 1.2]{A}.
\smallskip
\newline \indent For (1), since $\im\, \phi_0\subseteq (\im\, a_1)_{L_0}^F$, there exists $q=p^e>0$, such that $(\im\, \phi_0)^{[q]}\subseteq (\im\, a_1)^{[q]}$, i.e. $\im\, (\phi_0^{[q]})\subseteq \im\, (a_1^{[q]})$. So we have the following commutative diagram
\begin{displaymath}
\xymatrix{
0\ar[r]& \mathcal{F}^e(L_d)\ar[r]^{a_d^{[q]}}& \mathcal{F}^e(L_{d-1})\ar[r]^{a_{d-1}^{[q]}}& \ldots\ar[r]^{a_1^{[q]}}& \mathcal{F}^e(L_0)\ar[r]& H_0(\mathcal{F}^e(L_{\bullet}))\ar[r]& 0 \\
0\ar[r]& \mathcal{F}^e(G_d) \ar[u]^{\phi_d^{[q]}} \ar[r]^{b_d^{[q]}}& \mathcal{F}^e(G_{d-1})\ar[u]^{\phi_{d-1}^{[q]}} \ar[r]^{b_{d-1}^{[q]}}& \ldots\ar[r]^{b_1^{[q]}}& \mathcal{F}^e(G_0)\ar[u]^{\phi_0^{[q]}} \ar[r]& H_0(\mathcal{F}^e(G_{\bullet}))\ar[u]^0 \ar[r]&0}
\end{displaymath}
and we claim that the first row is exact.
\smallskip
\newline \indent By our condition,
$$0\rightarrow L_d\stackrel{a_d}{\longrightarrow} L_{d-1}\rightarrow\ldots\rightarrow L_1\stackrel{a_1}{\longrightarrow} L_0\rightarrow 0$$
is acyclic, so by Theorem \ref{BEacyclic}, we have $\rank(a_i)+\rank(a_{i+1})=\rank(L_i)$ and $\depth_{I_{r_i}(a_i)}R\geqslant i$ for any $1\leqslant i\leqslant d$. By Definition \ref{defn2.3}, the entries of $a_i^{[q]}$ are just the $q$-th power of entries of $a_i$, and if $x_1,\ldots, x_c$ is regular in a local ring, then $x_1^t,\ldots, x_c^t$ is also regular for any $t>0$, we have that $\depth_{I_{r_i}(a_i^{[q]})} R =\depth_{I_{r_i}(a_i)} R\neq 0$. And every $r_i\times r_i$ minor of $a_i^{[q]}$ is the $q$-th power of the $r_i\times r_i$ minor of $a_i$. But since $\depth_{I_{r_i}(a_i)}R\geqslant i>0$, there is at least one $r_i\times r_i$ minor of $a_i$ that is a regular element on $R$, so its $q$-th power is still a regular element, so $r_i\leqslant \rank(a_i^{[q]})$. But it is also obvious that $\rank(a_i^{[q]})\leqslant \rank(a_i)$, we have that $r_i=\rank(a_i^{[q]})$. So $$\rank(a_i^{[q]})+\rank(a_{i+1}^{[q]})=\rank(L_i)=\rank(\mathcal{F}^e(L_i)).$$
By Theorem \ref{BEacyclic} again, we know that
$$0\rightarrow \mathcal{F}^e(L_d)\stackrel{a_d^{[q]}}{\longrightarrow} \mathcal{F}^e(L_{d-1})\rightarrow\ldots\rightarrow \mathcal{F}^e(L_1)\stackrel{a_1^{[q]}}{\longrightarrow} \mathcal{F}^e(L_0)\rightarrow 0$$
is acyclic, so the claim is true.
\smallskip
\newline \indent Since $\im\,\phi_0^{[q]}\subseteq \im\,a_1^{[q]}$, the chain maps are homotopic to the zero map. Hence we get a commutative diagram
\begin{displaymath}
\xymatrix{
\mathcal{F}^e(L_d) \\
\mathcal{F}^e(G_d)\ar[u]^{\phi_d^{[q]}} \ar[r]^{b_d^{[q]}}& \mathcal{F}^e(G_{d-1})\ar[ul]_{\exists\ \psi}
}
\end{displaymath}
where $\psi$ is an $R$-linear map from $\mathcal{F}^e(G_{d-1})$ to $\mathcal{F}^e(L_d)$. Thus $\phi_d^{[q]}=\psi\circ b_d^{[q]}$, thus $\im\phi_d^{[q]}\subseteq I_1(b_d^{[q]})\mathcal{F}^e(L_d)=(I_1(b_d)L_d)^{[q]}$. Hence $\im\phi_d\subseteq (I_1(b_d)L_d)^F$.
\smallskip
\newline \indent For (2), dualizing the above diagram, we get
\begin{displaymath}
\xymatrix{
\mathcal{F}^e(L_d)^{\vee}\ar[d]_{\phi_d^{[q]\vee}} \ar[rd]^{\psi^{\vee}}\\
\mathcal{F}^e(G_d)^{\vee}& \mathcal{F}^e(G_{d-1})^{\vee} \ar[l]_{b_d^{[q]\vee}}.
}
\end{displaymath}
Thus $\phi_d^{[q]\vee}=b_d^{[q]\vee}\circ\psi^{\vee}$, thus
$$\im(\,\phi_d^{\vee})^{[q]}=\im\,(\phi_d^{[q]\vee})\subseteq \im\,(b_d^{[q]\vee})=(\im\,b_d^{\vee})^{[q]}.$$
Hence $\im\,\phi_d^{\vee}\subseteq (\im\,b_d^{\vee})^F$.

\end{proof}

\begin{lem}\cite[Lemma 1.3]{A}\label{lemma3.3}
Let $R$ be a commutative ring. Let $(L_{\bullet},d_{\bullet})$ and $(G_{\bullet},e_{\bullet})$ be length $d$ complexes of finitely generated free modules such that $G_{\bullet}^{\vee}$ is acyclic. Let $\phi_{\bullet}:G_{\bullet}\rightarrow L_{\bullet}$ be a chain map. If $\im\,\phi_d^{\vee}\subseteq \im\,e_d^{\vee}$, then $\im\,\phi_0\subseteq \im\,d_1$.
\end{lem}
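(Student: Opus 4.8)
The plan is to dualize everything, run a lifting argument downward from the top degree, and then dualize back. Applying $\Hom_R(-,R)$ to $\phi_\bullet$ produces a map of complexes $\phi_\bullet^\vee\colon L_\bullet^\vee\to G_\bullet^\vee$, consisting of maps $\phi_i^\vee\colon L_i^\vee\to G_i^\vee$ with $\phi_i^\vee\circ d_i^\vee=e_i^\vee\circ\phi_{i-1}^\vee$, where $L_\bullet^\vee$ is the complex $0\to L_0^\vee\to L_1^\vee\to\cdots\to L_d^\vee\to 0$ with differentials $d_i^\vee$ and $G_\bullet^\vee$ is the analogous complex with differentials $e_i^\vee$. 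Acyclicity of $G_\bullet^\vee$ means precisely that this complex is exact at $G_0^\vee,\dots,G_{d-1}^\vee$; in particular $e_1^\vee$ is injective and $\ker e_{i+1}^\vee=\im e_i^\vee$ for $1\le i\le d-1$. The hypothesis $\im\phi_d^\vee\subseteq\im e_d^\vee$ is a condition only on the top component.

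First I would construct, by descending induction on $i$ from $d$ down to $1$, $R$-linear maps $h_i\colon L_i^\vee\to G_{i-1}^\vee$ satisfying $\phi_i^\vee=e_i^\vee\circ h_i+h_{i+1}\circ d_{i+1}^\vee$, with the convention $h_{d+1}=0$. For $i=d$ this is immediate: $\im\phi_d^\vee\subseteq\im e_d^\vee$ and $L_d^\vee$ is free, hence projective, so $\phi_d^\vee$ lifts through $e_d^\vee$ to some $h_d$. For the inductive step, given $h_{i+1},\dots,h_d$, a short computation with the chain-map relations and the complex identity $d^\vee\circ d^\vee=0$ yields $e_{i+1}^\vee\circ(\phi_i^\vee-h_{i+1}\circ d_{i+1}^\vee)=0$; hence $\phi_i^\vee-h_{i+1}\circ d_{i+1}^\vee$ has image inside $\ker e_{i+1}^\vee=\im e_i^\vee$, and projectivity of $L_i^\vee$ produces the required $h_i$. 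Running the same computation once more at the bottom gives $e_1^\vee\circ(\phi_0^\vee-h_1\circ d_1^\vee)=0$, and since $e_1^\vee$ is injective we conclude $\phi_0^\vee=h_1\circ d_1^\vee$.

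Finally I would apply $\Hom_R(-,R)$ to the identity $\phi_0^\vee=h_1\circ d_1^\vee$. Because $G_0,L_0,L_1$ are finitely generated free, the canonical biduality maps are isomorphisms, so this identity dualizes to $\phi_0=d_1\circ h_1^\vee$ with $h_1^\vee\colon G_0\to L_1$; therefore $\im\phi_0\subseteq\im d_1$, as claimed. Conceptually, the maps $h_i$ exhibit $\phi_\bullet^\vee$ as null-homotopic: after reindexing, $G_\bullet^\vee$ is a length $d$ free resolution of the cokernel of $e_d^\vee$, the hypothesis says $\phi_\bullet^\vee$ lifts the zero endomorphism of that cokernel, and the comparison theorem for projective resolutions supplies the null-homotopy; dualizing it and reading off the degree-zero component is precisely the conclusion above.

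I do not anticipate a serious obstacle, since the argument is the dual of the standard comparison/lifting construction and the real work is bookkeeping. The three points that require care are: the directions of all the dualized maps and the index shift that turns $G_\bullet^\vee$ into a resolution; the precise content of the assumption that $G_\bullet^\vee$ is acyclic, namely exactness at $G_0^\vee,\dots,G_{d-1}^\vee$, and in particular the injectivity of $e_1^\vee$ that enters only in the very last step; and the appeal to reflexivity of finitely generated free modules to pass from $\phi_0^\vee=h_1\circ d_1^\vee$ back to $\phi_0=d_1\circ h_1^\vee$.
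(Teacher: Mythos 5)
Your argument is correct, and it is the standard one: since the paper only cites Aberbach's Lemma 1.2/1.3 without reproducing a proof, there is no internal proof to compare against, but your dual null-homotopy construction is exactly the canonical way to establish this statement and is surely what is in the cited reference. The key points you single out — that acyclicity of $G_\bullet^\vee$ gives both $\ker e_{i+1}^\vee=\im e_i^\vee$ and the injectivity of $e_1^\vee$ needed at the end, that the lifting steps only use projectivity of the $L_i^\vee$ (so $L_\bullet$ need not be acyclic), and that biduality for finitely generated free modules converts $\phi_0^\vee=h_1\circ d_1^\vee$ into $\phi_0=d_1\circ h_1^\vee$ — are precisely the ones that matter, and you handle each correctly. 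One small imprecision in your concluding paragraph: you invoke ``the comparison theorem for projective resolutions,'' but $L_\bullet^\vee$ is merely a complex of projectives, not a resolution; the relevant fact is the slightly more general statement (which your explicit construction actually proves) that a chain map from a complex of projectives to an acyclic complex which kills the degree-zero cokernel is null-homotopic. Your explicit inductive construction of the $h_i$ does not rely on exactness of $L_\bullet^\vee$ anywhere, so the proof stands; just phrase the conceptual summary accordingly.
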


\begin{lem}\label{lemDF}
Let $(R,\mathfrak{m})$ be a Noetherian local ring of characteristic $p>0$. $I$ an $R$-ideal. Let $S=R/I$ and $J$ an $R$-ideal that contains $I$. If for $\bar{J}$ the image of $J$ in $S$, we have that $\bar{J}$ is a Frobenius closed $S$-ideal, then $J$ is a Frobenius closed $R$-ideal, i.e. any $R$-ideal that contains $I$ is Frobenius closed.
\end{lem}
\begin{proof}
Let $R\rightarrow R/I$ be the natural projection map. Let $x\in J_R^F$ where $J_R^F$ is denoted as the Frobenius closure of $J$ as an $R$-ideal, then there exists $q=p^e>0$ such that $x^q\in J^{[q]}$. Let $\bar{x}$ be the image of $x$ in $R/I$. Then $\Bar{x}^q\in \bar{J}^{[q]}$. Since $\bar{J}$ is Frobenius closed as an $S$-ideal, we get that $\bar{x}\in\bar{J}$, i.e. $x\in J$. Hence $J_R^F=J$.
\end{proof}

Next, we need to prove that if $\pd_R R/I<\infty$, and $\underline{x}=x_1,\ldots,x_c\in \mathfrak{m}-I$ is a maximal regular sequence on $R/I$, then $\pd_R R/(I+(\underline{x}))<\infty$, which is the precondition to use Auslander-Buchsbaum's formula to the $R$-module $R/(I+(\underline{x}))$. And it suffices to show the case for $c=1$ which is Lemma \ref{lemma3.6}, because we can then use induction to get the general case.
\smallskip
\newline\indent
We recall the following lemma which is a standard proof.

\begin{lem}\label{lemma3.5}
Let $(R,\mathfrak{m},K)$ be a Noetherian local ring. Given a short exact sequence of $R$-modules
$$0\rightarrow M'\rightarrow M\rightarrow M''\rightarrow 0,$$
if any two of $\{M',M,M''\}$ have finite projective dimensions, then so does the third one.
\end{lem}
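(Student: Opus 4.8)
The plan is to reduce the statement to the standard homological characterization of finite projective dimension via vanishing of $\ext$, and then read off the conclusion from the long exact $\ext$-sequence attached to the given short exact sequence. First I would record the following reformulation: for an $R$-module $M$ one has $\pd_R M\le n$ if and only if $\ext_R^{n+1}(M,N)=0$ for \emph{every} $R$-module $N$. This follows by dimension shifting: if $\Omega$ denotes the $n$-th syzygy of $M$ in a projective resolution $\cdots\to P_1\to P_0\to M\to 0$, then $\ext_R^{n+1}(M,N)\cong\ext_R^{1}(\Omega,N)$ for all $N$, and $\ext_R^{1}(\Omega,-)\equiv 0$ is exactly the condition that $\Omega$ be projective, i.e.\ that $0\to\Omega\to P_{n-1}\to\cdots\to P_0\to M\to 0$ be a projective resolution of length $n$. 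Hence $\pd_R M<\infty$ if and only if there is an integer $n$, \emph{independent of $N$}, with $\ext_R^{i}(M,N)=0$ for all $i>n$ and all $N$.

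Next I would apply $\Hom_R(-,N)$ to $0\to M'\to M\to M''\to 0$ and use the resulting long exact sequence
\[
\cdots\longrightarrow\ext_R^{i}(M'',N)\longrightarrow\ext_R^{i}(M,N)\longrightarrow\ext_R^{i}(M',N)\longrightarrow\ext_R^{i+1}(M'',N)\longrightarrow\cdots,
\]
splitting into the three cases according to which two of $M',M,M''$ have finite projective dimension, and in each case choosing $n$ strictly larger than the two finite projective dimensions in question. If $\pd_R M'$ and $\pd_R M$ are finite, then for $i>n$ both $\ext_R^{i}(M',N)$ and $\ext_R^{i+1}(M,N)$ vanish for every $N$, so the segment $\ext_R^{i}(M',N)\to\ext_R^{i+1}(M'',N)\to\ext_R^{i+1}(M,N)$ forces $\ext_R^{i+1}(M'',N)=0$, whence $\pd_R M''\le n+1$. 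If $\pd_R M$ and $\pd_R M''$ are finite, then for $i>n$ both $\ext_R^{i}(M,N)$ and $\ext_R^{i+1}(M'',N)$ vanish, so $\ext_R^{i}(M,N)\to\ext_R^{i}(M',N)\to\ext_R^{i+1}(M'',N)$ forces $\ext_R^{i}(M',N)=0$, whence $\pd_R M'\le n$. If $\pd_R M'$ and $\pd_R M''$ are finite, then for $i>n$ both $\ext_R^{i}(M'',N)$ and $\ext_R^{i}(M',N)$ vanish, so $\ext_R^{i}(M'',N)\to\ext_R^{i}(M,N)\to\ext_R^{i}(M',N)$ forces $\ext_R^{i}(M,N)=0$, whence $\pd_R M\le n$. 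In all three cases the third module acquires finite projective dimension.

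There is no genuine obstacle here; the argument is entirely formal. The one point that requires a little care is that the integer $n$ in the $\ext$-vanishing criterion must be chosen uniformly in $N$, which is exactly why it is worth isolating the syzygy reformulation at the outset rather than arguing one module $N$ at a time. I would also remark that neither the hypothesis that $R$ be Noetherian and local nor any finite-generation assumption on $M',M,M''$ is actually used in this lemma; these are simply the ambient setting in which it will subsequently be applied (for instance to reduce the statement preceding Lemma \ref{lemma3.6} to the case $c=1$).
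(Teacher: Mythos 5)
Your proof is correct, and it takes a genuinely different (if equally formal) route from the paper's. The paper applies $\tor_\bullet^R(-,K)$ with $K$ the residue field and invokes the criterion---valid for finitely generated modules over a Noetherian local ring, via minimal free resolutions---that $\pd_R M<\infty$ if and only if $\tor_n^R(M,K)=0$ for $n\gg 0$; the conclusion then falls out of the long exact $\tor$-sequence by the same ``two outer terms vanish, so the middle does'' reasoning you use. You instead work with the characterization $\pd_R M\le n$ iff $\ext_R^{n+1}(M,N)=0$ for every $N$, justified by dimension shifting to the $n$-th syzygy, and run the argument in the long exact $\ext$-sequence obtained from $\Hom_R(-,N)$. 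The paper's route has the advantage of a single test module $K$, but it tacitly relies on the modules being finitely generated (so that minimal resolutions and the Betti-number description of $\tor_n^R(-,K)$ are available); that is harmless here, since the lemma is only ever applied to cyclic modules $R/I$ and $R/(I+(y))$. Your route avoids that subtlety altogether and, as you correctly observe at the end, needs no local, Noetherian, or finite-generation hypotheses whatsoever---the price being that you must quantify over all $N$, which you handle properly by fixing the bound $n$ uniformly before choosing $N$.
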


\begin{lem} \label{lemma3.6}
Let $(R,\mathfrak{m})$ be a Noetherian local ring and $I$ an $R$-ideal. Suppose $\pd_R R/I<\infty$ and $y\in \mathfrak{m}-I$ is a regular element on $R/I$, then $\pd_R R/(I+(y))<\infty$.
\end{lem}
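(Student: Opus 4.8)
The plan is to exhibit $R/(I+(y))$ as the cokernel of multiplication by $y$ on $R/I$ and then invoke Lemma~\ref{lemma3.5}. First I would observe that the hypothesis that $y$ is a regular element on $R/I$ says precisely that the map $R/I \stackrel{y}{\longrightarrow} R/I$ given by multiplication by $y$ is injective. Its image is $(yR+I)/I = (I+(y))/I$, so its cokernel is $(R/I)\big/\big((I+(y))/I\big) \cong R/(I+(y))$ by the usual isomorphism theorem. Hence there is a short exact sequence of $R$-modules
$$0 \longrightarrow R/I \stackrel{y}{\longrightarrow} R/I \longrightarrow R/(I+(y)) \longrightarrow 0.$$

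Next I would apply Lemma~\ref{lemma3.5} to this sequence with $M' = M = R/I$ and $M'' = R/(I+(y))$. By hypothesis $\pd_R R/I < \infty$, so two of the three modules appearing have finite projective dimension; therefore the third one, $R/(I+(y))$, also has finite projective dimension, which is exactly the assertion of the lemma.

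I expect no genuine obstacle here: the argument is a one-line reduction to Lemma~\ref{lemma3.5}. The only points worth a remark are that the phrase ``$y$ is a regular element on $R/I$'' is exactly the injectivity of multiplication by $y$ that is needed to form the short exact sequence, and that the condition $y \in m - I$ merely guarantees $R/(I+(y))$ is a proper nonzero quotient and is not otherwise used in the projective-dimension count. Finally, as indicated in the text preceding the statement, the general case of a maximal regular sequence $x_1,\dots,x_c$ on $R/I$ follows by iterating: apply Lemma~\ref{lemma3.6} successively to the ideals $I$, $I+(x_1)$, $\dots$, $I+(x_1,\dots,x_{c-1})$, using that $x_{i+1}$ is regular on $R/(I+(x_1,\dots,x_i))$.
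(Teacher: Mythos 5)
Your argument is exactly the paper's: form the short exact sequence $0 \to R/I \xrightarrow{\cdot y} R/I \to R/(I+(y)) \to 0$ coming from regularity of $y$ on $R/I$, and then apply Lemma~\ref{lemma3.5} with two of the three terms having finite projective dimension. Correct, and no differences to report.
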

\begin{proof}
Since $y$ is regular on $R/I$, we have the short exact sequence
$$0\rightarrow R/I\stackrel{\centerdot y}{\rightarrow} R/I\rightarrow R/(I+(y)) \rightarrow 0.$$
And by Lemma \ref{lemma3.5}, we get that $R/(I+(y))$ has finite projective dimension since $\pd_R R/I<\infty$.
\end{proof}

\indent Now we are ready to prove our first proposition.

\begin{prop}\label{prop1}
Let $(R,\mathfrak{m})$ be a Noetherian local Cohen-Macaulay ring of characteristic $p$ and $\dim\,R=d$. Suppose that there exists an $R$-ideal $I$ such that $\pd_R R/I<\infty$ and every maximal regular sequence on $R/I$ is Frobenius closed, then every regular sequence on $R$ is Frobenius closed.
\end{prop}
\begin{proof}
Choose $\underline{x}=x_1,\ldots,x_c$ a maximal regular sequence on $R/I$, then we have $\depth_R R/(I+(\underline{x}))=0$. By Lemma \ref{lemma3.6}, we know that $\pd _R R/(I+(\underline{x}))<\infty$, then by Auslander-Buchsbaum's formula we have that $\pd_R R/(I+(\underline{x}))=\depth\ R$, which we denote as $g$. Let $\underline{y}=y_1,\ldots,y_g$ be a maximal regular sequence on $R$. Let
\begin{equation*}
0\rightarrow G_g\stackrel{a_g}{\longrightarrow} G_{g-1}\rightarrow\ldots\rightarrow G_1\stackrel{a_1}{\longrightarrow} G_0\rightarrow 0
\end{equation*}
be a projective resolution of $R/(I+(\underline{x}))$. Let $K_{\bullet}(\underline{y})$ denote the Koszul complex of $\underline{y}=y_1,\ldots,y_g$, which is a free resolution of $R/(\underline{y})$. We will first show that $(\underline{y})$ is Frobenius closed.
\smallskip
\newline \indent If not, firstly we claim that there exists $w\in (\underline{y})^F-(\underline{y})$ such that $\mathfrak{m}w\subseteq (\underline{y})$. Consider the finitely generated $R/(\underline{y})$-module $M=(\underline{y})^F/(\underline{y})$. Since $\underline{y}$ is maximal regular on $R$, we have $\depth_{R/(\underline{y})}M=0$, which means $\soc_{R/(\underline{y})}M\neq 0$. Hence for any $0\neq \bar{w}\in \soc_{R/(\underline{y})}M$, its lifting $w$ in $R$ satisfies our condition. We then construct a chain map from $G_{\bullet}^{\vee}$ to $K_{\bullet}(\underline{y})$.
\begin{displaymath}
\xymatrix{
0& H_0(G_{\bullet}^{\vee}) \ar[l] \ar[d]_{\exists\ \varphi}& G_{g}^{\vee} \ar[l] \ar[d]_{\phi_0}& G_{g-1}^{\vee} \ar[l]_{a_g^{\vee}} \ar[d]_{\phi_1}& \ldots\ar[l]& G_1^{\vee}\ar[l] \ar[d]_{\phi_{g-1}}& G_0^{\vee}\ar[l]_{a_1^{\vee}} \ar[d]_{\phi_g}& 0\ar[l] \\
0& R/(\underline{y})\ar[l]& R\ar[l]& R^{\oplus g}\ar[l]& \ldots\ar[l]& R^{\oplus g}\ar[l]& R\ar[l]& 0\ar[l]}
\end{displaymath}
\indent Fix a basis for the free module $G_g$, so that we have a corresponding dual basis for $G_g^{\vee}$. Let $e_1$ be an element of the dual basis of $G_g^{\vee}$. Let $\phi_0:G_g^{\vee}\rightarrow R$ be given by extending $\phi_0(e_1)=w$ and $\phi_0(e_i)=0$ for $i>1$. We want to show there exists $\varphi:H_0(G_{\bullet}^{\vee})\rightarrow R/(\underline{y})$ such that the left block of the diagram commutes, i.e. show that $\phi_0(\im\,a_g^{\vee})\subseteq (\underline{y})$. Since $(G_{\bullet},a_{\bullet})$ is minimal free, all entries of the matrix $a_g$ are in $\mathfrak{m}$, and $a_g^{\vee}$ is just the transpose of $a_g$ as a matrix, so all entries of $a_g^{\vee}$ are also in $\mathfrak{m}$. So it suffices to show $\phi_0(\mathfrak{m}G_g^{\vee})\subseteq (\underline{y})$.
\smallskip
\newline\indent By our definition of $\phi_0$, $\phi_0(G_g^{\vee})=Rw$, so $\phi_0(\mathfrak{m}G_g^{\vee})\subseteq \mathfrak{m}w\subseteq(\underline{y})$. So there is an induced map $\varphi:H_0(G_{\bullet}^{\vee})\rightarrow R/(\underline{y})$ such that the left block commutes. Since each $G_i$ is free, $G_i^{\vee}\cong G_i$ is also free. Since the second row is acyclic, we can get liftings $\phi_i: G_{g-i}^{\vee}\rightarrow R^{\oplus\tbinom{g}{i}}$ of $\phi_0$ such that the diagram commutes. By Theorem \ref{LG(1)(2)}, since $w\in (\underline{y})^F$, i.e. $\im\,\phi_0\subseteq (\im\,(\underline{y}))^F$, we have $\im\,\phi_g^{\vee}\subseteq (\im\,(a_1^{\vee\vee}))^F=(\im\,a_1)^F$. From the condition in our Proposition \ref{prop1} we know that $(\underline{x})$ is Frobenius closed in $R/I$, hence by Lemma \ref{lemDF} we know that $I+(\underline{x})$ is Frobenius closed in $R$. From our construction of the projective resolution of $R/(I+(\underline{x}))$ we know that $\im\,a_1=I+(\underline{x})$, hence $(\im\,a_1)^F=\im\,a_1$, hence $\im\,\phi_g^{\vee}\subseteq \im\,a_1$. Since $G_{\bullet}^{\vee\vee}$ is acyclic because $G_{\bullet}^{\vee\vee}=G_{\bullet}$ which is a projective resolution of $R/(I+(\underline{x}))$, by Lemma \ref{lemma3.3} we get that $\im\,\phi_0\subseteq \im\, (\underline{y})$, i.e. $w\in (\underline{y})$, which is a contradiction to our claim. So $(\underline{y})^F=(\underline{y})$. Now we have proved that any maximal regular sequence in $R$ is Frobenius closed.
\smallskip
\newline\indent Now for $y_1,\ldots,y_{c'}$ being part of a maximal regular sequence where $c'<g$, we can extend $y_1,\ldots,y_{c'}$ to a maximal regular sequence $y_1,\ldots,y_{c'},\ldots,y_g$, then we have
\begin{equation*}
\begin{split}
(y_1,\ldots,y_{c'})^F &\subseteq \bigcap_{n>0} (y_1,\ldots,y_{c'},y_{c'+1}^n,\ldots,y_g^n)^F\\ &= \bigcap_{n>0}(y_1,\ldots,y_{c'},y_{c'+1}^n,\ldots,y_g^n) \\ &= (y_1,\ldots,y_{c'}),
\end{split}
\end{equation*}
\newline where the first equality holds because $y_1,\ldots,y_{c'},y_{c'+1}^n,\ldots,y_g^n$ is also a maximal regular sequence and the second equality holds because of Krull's Intersection Theorem. Hence any ideal generated by a regular sequence is Frobenius closed.
\end{proof}

Now we complete the proof of one of our main propositions. We next prove our second proposition.

\begin{prop}\label{prop2}
Let $(R,\mathfrak{m})$ be a Noetherian excellent local Cohen-Macaulay ring of characteristic $p>0$, $\dim\,R=d$. Suppose there exists $I$ an $R$-ideal such that $\pd_R R/I<\infty$ and every maximal regular sequence on $R/I$ is tightly closed. Then every regular sequence on $R$ is tightly closed.
\end{prop}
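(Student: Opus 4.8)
The plan is to rerun the proof of Proposition~\ref{prop1} with Frobenius closure replaced by tight closure throughout, invoking Aberbach's original Theorem~\ref{Alem1.2} wherever its Frobenius-closure analogue Theorem~\ref{LG(1)(2)} was used. First I would reduce to maximal regular sequences exactly as there: pick $\underline{x}=x_1,\dots,x_c$ a maximal regular sequence on $R/I$, so $\depth_R R/(I+(\underline{x}))=0$; iterating Lemma~\ref{lemma3.6} gives $\pd_R R/(I+(\underline{x}))<\infty$, and Auslander--Buchsbaum yields $\pd_R R/(I+(\underline{x}))=\depth R=:g$. Fix a minimal free resolution $(G_{\bullet},a_{\bullet})$ of $R/(I+(\underline{x}))$; it has length $g$ with $\im a_1=I+(\underline{x})$. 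Let $\underline{y}=y_1,\dots,y_g$ be a maximal regular sequence on $R$ and $K_{\bullet}(\underline{y})$ its Koszul complex, a free resolution of $R/(\underline{y})$. To prove $(\underline{y})$ is tightly closed, suppose not; then, exactly as in the proof of Proposition~\ref{prop1}, one produces $w\in(\underline{y})^{*}\setminus(\underline{y})$ with $mw\subseteq(\underline{y})$ (the socle of the nonzero module $(\underline{y})^{*}/(\underline{y})$ over the depth-zero ring $R/(\underline{y})$ is nonzero), together with a chain map $\phi_{\bullet}\colon G_{\bullet}^{\vee}\to K_{\bullet}(\underline{y})$ whose $\phi_0$ sends a chosen dual basis vector of $G_g^{\vee}$ to $w$ and the others to $0$, lifted through the acyclic complex $K_{\bullet}(\underline{y})$. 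Then $\im\phi_0=Rw\subseteq(\underline{y})^{*}$, which is precisely the hypothesis of Theorem~\ref{Alem1.2} (the image of the first differential of $K_{\bullet}(\underline{y})$ being $(\underline{y})$), so Theorem~\ref{Alem1.2}(2) gives
$$\im\phi_g^{\vee}\subseteq(\im a_1)^{*}=(I+(\underline{x}))^{*}_{R}.$$

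It then remains to establish the tight-closure analogue of Lemma~\ref{lemDF}, namely that $(I+(\underline{x}))^{*}_{R}=I+(\underline{x})$. Granting this, $\im\phi_g^{\vee}\subseteq I+(\underline{x})=\im a_1$, so Lemma~\ref{lemma3.3} forces $\im\phi_0\subseteq(\underline{y})$, i.e., $w\in(\underline{y})$, contradicting the choice of $w$; hence every maximal regular sequence on $R$ is tightly closed. The passage to arbitrary regular sequences is then identical to Proposition~\ref{prop1}: if $y_1,\dots,y_{c'}$ is part of a maximal regular sequence, extend it to $y_1,\dots,y_g$; since $(y_1,\dots,y_{c'},y_{c'+1}^{n},\dots,y_g^{n})$ is again a maximal regular sequence (powers of a regular sequence of length $\depth R$ form one), it is tightly closed, so $(y_1,\dots,y_{c'})^{*}\subseteq\bigcap_{n>0}(y_1,\dots,y_{c'},y_{c'+1}^{n},\dots,y_g^{n})=(y_1,\dots,y_{c'})$ by Krull's intersection theorem.

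The only genuinely new ingredient, and the step I expect to be hardest, is $(I+(\underline{x}))^{*}_{R}=I+(\underline{x})$. For Frobenius closure, Lemma~\ref{lemDF} was immediate because that closure carries no multiplier: from $z^{q}\in(I+(\underline{x}))^{[q]}$ one reduces modulo $I$ to $\bar z^{q}\in(\underline{\bar x})^{[q]}$ in $R/I$ and invokes $(\underline{\bar x})^{F}_{R/I}=(\underline{\bar x})$. For tight closure, $z\in(I+(\underline{x}))^{*}_{R}$ only supplies some $c\in R^{\circ}$ with $cz^{q}\in I^{[q]}+(\underline{x})^{[q]}$ for $q\gg0$; reducing modulo $I$ gives $\bar c\,\bar z^{q}\in(\underline{\bar x})^{[q]}$ in $R/I$, and one may conclude $\bar z\in(\underline{\bar x})^{*}_{R/I}=(\underline{\bar x})$ --- using the hypothesis, since $\underline{\bar x}$ is a maximal regular sequence on $R/I$ --- only when $\bar c\in(R/I)^{\circ}$, which can fail, as an arbitrary multiplier $c\in R^{\circ}$ may lie in a prime minimal over $I$, or even in $I$. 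This is exactly where the excellence hypothesis must enter: one replaces $c$ by a test element. The hypothesis already forces $R/I$ to be reduced (apply the intersection argument inside $R/I$, using the hypothesis there: $0^{*}_{R/I}\subseteq\bigcap_n(x_1^{n},\dots,x_c^{n})=0$), so $R/I$ is excellent and reduced and hence admits a test element by Hochster--Huneke; the real work is to choose the multiplier --- for instance one coming from the singular locus, perhaps after localizing or completing --- so that its image survives in $(R/I)^{\circ}$, equivalently to control how $R^{\circ}$ meets the primes minimal over $I$ given that $\pd_R R/I<\infty$. Pinning this down is the technical heart of the proposition.
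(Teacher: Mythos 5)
Your reduction to the tight-closure analogue of Lemma~\ref{lemDF} is exactly right, and everything else --- the socle element $w$, the chain map $\phi_\bullet\colon G_\bullet^\vee\to K_\bullet(\underline y)$, the invocation of Theorem~\ref{Alem1.2}(2) in place of Theorem~\ref{LG(1)(2)}(2), the use of Lemma~\ref{lemma3.3}, and the intersection argument for non-maximal regular sequences --- matches the paper's proof, which simply says to rerun Proposition~\ref{prop1} with the closures swapped. So the structure is sound. But the one step you flag as the ``technical heart'' is genuinely left open, and the route you sketch (arranging a test element whose image survives in $(R/I)^\circ$) is not how the paper closes it, nor is it clear it can be pushed through directly: the multiplier $c\in R^\circ$ supplied by $z\in(I+(\underline x))^*_R$ is arbitrary, and there is no control over whether one can trade it for something avoiding the minimal primes over $I$.

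The paper's resolution (Lemma~\ref{lemDT}) sidesteps the multiplier problem entirely by using two facts from Hochster's notes. First, Lemma~\ref{lemH1}: mapping the free module $G=R$ onto $M=R/I$ with $H=J$ and $N=\bar J$, membership $x\in J^*_R$ is equivalent to $\bar x\in \bar J^*_{R/I}$ where tight closure is still computed over $R$. Second, Lemma~\ref{lemH2} (persistence of tight closure along $R\to S=R/I$, valid since $R$ is excellent local): this carries $\bar x\in\bar J^*$ over $R$ to $\bar x\otimes 1\in \bar J^*_S$ over $S$. Now the hypothesis $\bar J^*_S=\bar J$ applies and gives $\bar x\in\bar J$, hence $x\in J$. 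In other words, the issue of whether the multiplier survives modulo $I$ is exactly what the persistence theorem is built to handle, and that is also precisely where the excellence assumption is consumed. Your instinct that test elements are lurking is not wrong --- persistence is proved with them --- but the clean move is to quote persistence rather than to re-derive it ad hoc. To complete your proposal you would replace the final paragraph with this two-step argument: translate to module tight closure over $R$ via Lemma~\ref{lemH1}, then push forward along $R\to R/I$ via Lemma~\ref{lemH2}.
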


We want to change the conditions of Frobenius closure to tight closure in Proposition \ref{prop1}. As we can see, Theorem \ref{LG(1)(2)} has its corresponding version Theorem \ref{Alem1.2} of tight closure. But it is not trivial to change the Frobenius closure to tight closure in Lemma \ref{lemDF}. To make it work, we need to assume some mild conditions, i.e. $R$ is an excellent local ring. We now prove a tight closure analog of Lemma \ref{lemDF}.

\smallskip

The following two lemmas are from Hochster's notes on tight closure.

\begin{lem}\cite[Proposition on Page 51]{H}\label{lemH1}
Let $R$ be a Noetherian ring of characteristic $p>0$, let $N\subseteq M$ be $R$-modules and let $u\in M$. Then $u\in N_M^*$ if and only if the image $\bar{u}$ of $u$ in $M/N$ is in $0_{M/N}^*$.
\smallskip
\newline\indent Hence if we map a free module $G$ onto $M$, say $\varphi:G\rightarrow M$, let $H=\varphi^{-1}(N)\subseteq G$, and let $v\in G$ be such that $\varphi(v)=u$, then $u\in N_M^*$ if and only if $v\in H_G^*$.
\end{lem}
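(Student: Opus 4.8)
The plan is to deduce everything from the right-exactness of the Frobenius base change functor $\mathcal{F}^e$ applied to the short exact sequence $0\to N\to M\to M/N\to 0$. First I would apply $\mathcal{F}^e$ to this sequence. Since $\mathcal{F}^e$ is base change along the ring map $F^e\colon R\to R$, it is right exact, so $\mathcal{F}^e(N)\to\mathcal{F}^e(M)\to\mathcal{F}^e(M/N)\to 0$ is exact. By Definition~\ref{defn2.3}, $N_M^{[q]}$ is the image of $\mathcal{F}^e(N)\to\mathcal{F}^e(M)$, which by right exactness equals $\ker\bigl(\pi_q\colon\mathcal{F}^e(M)\twoheadrightarrow\mathcal{F}^e(M/N)\bigr)$; hence $\pi_q$ induces an isomorphism $\mathcal{F}^e(M)/N_M^{[q]}\xrightarrow{\ \sim\ }\mathcal{F}^e(M/N)$.

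Next I would track the element $u$. The quotient map $M\to M/N$, $u\mapsto\bar u$, is compatible with the natural maps $M\to\mathcal{F}^e(M)$ and $M/N\to\mathcal{F}^e(M/N)$ (functoriality of $\mathcal{F}^e$, cf.\ Definition~\ref{defn2.3}), so $\pi_q(cu^q)=c\bar u^q$ for every $c\in R$. Since $0^{[q]}_{M/N}$ is the image of $\mathcal{F}^e(0)\to\mathcal{F}^e(M/N)$, which is $0$, we obtain, for every $c\in R^{\circ}$ and every $q=p^e$:
\[
cu^q\in N_M^{[q]}\iff \pi_q(cu^q)=0\iff c\bar u^q=0\iff c\bar u^q\in 0^{[q]}_{M/N}.
\]
Quantifying "$\exists\,c\in R^{\circ}$ such that this holds for all $q\gg 0$'' on both sides yields $u\in N_M^*\iff\bar u\in 0^*_{M/N}$, with the same multiplier $c$ working in either direction.

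For the "hence'' part, given a surjection $\varphi\colon G\to M$ with $H=\varphi^{-1}(N)$ and $v\in G$ with $\varphi(v)=u$, the map $\varphi$ descends to an isomorphism $G/H\xrightarrow{\ \sim\ }M/N$ carrying $\bar v$ to $\bar u$. Applying the equivalence just established first to $H\subseteq G$ and then to $N\subseteq M$ gives $v\in H_G^*\iff\bar v\in 0^*_{G/H}\iff\bar u\in 0^*_{M/N}\iff u\in N_M^*$, as desired.

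The only point requiring genuine care is the identification $\ker\bigl(\mathcal{F}^e(M)\to\mathcal{F}^e(M/N)\bigr)=N_M^{[q]}$: this uses that $\mathcal{F}^e$ is right exact but \emph{not} that it is exact (which fails in general), so one must invoke the cokernel description rather than a naive "apply $\mathcal{F}^e$ to everything'' argument. Everything else is bookkeeping --- checking that the element-level notation $u\mapsto u^q$ is functorial in $M$ and that the chosen multiplier $c$ transports unchanged through the isomorphisms above --- and I do not anticipate a serious obstacle.
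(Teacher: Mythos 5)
The paper does not give its own proof of this lemma — it is cited directly from Hochster's tight closure notes — so there is no in-paper argument to compare against. Your proof is correct and is exactly the standard argument: the key identification $N_M^{[q]}=\ker\bigl(\mathcal{F}^e(M)\to\mathcal{F}^e(M/N)\bigr)$ follows from right-exactness of the Frobenius functor applied to $0\to N\to M\to M/N\to 0$, after which the element-by-element equivalence $cu^q\in N_M^{[q]}\iff c\bar u^q=0$ (with the same $c$ and $q$) immediately transports the tight-closure condition, and the ``hence'' clause reduces to the first part via the isomorphism $G/H\cong M/N$. You are also right to flag that one needs right-exactness rather than exactness of $\mathcal{F}^e$; that is precisely the subtlety, and you handle it cleanly.
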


\begin{lem}\cite[Persistence of Tight Closure on Page 183]{H}\label{lemH2}
Let $R$ be a Noetherian ring that is $F$-finite or essentially of finite type over an excellent semilocal ring. Let $R\rightarrow S$ be a homomorphism of Noetherian rings and suppose that $N\subseteq M$ are arbitrary $R$-modules. Let $u\in N_M^*$, then $1\otimes u\in \langle S\otimes_R N\rangle_{S\otimes_R M}^*$.
\end{lem}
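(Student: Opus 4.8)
The plan is to prove this as the Hochster--Huneke persistence theorem, whose engine is the existence of \emph{completely stable test elements}: once $R$ has a test element $c$ whose image in $S$ lies in $S^{\circ}$, persistence for $R\to S$ is immediate, since one just base-changes a colon relation along $R\to S$. So the real work is (i) a reduction on the pair $N\subseteq M$, (ii) a reduction of the target $S$ to a complete local domain, (iii) the existence of completely stable test elements, and (iv) a bootstrap handling the case where the chosen test element vanishes in $S$.

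For (i): by Lemma \ref{lemH1} I may replace $(N\subseteq M)$ by $(0\subseteq M/N)$, so it suffices to show that if $x\in 0^{*}_{V}$ over $R$ then $1\otimes x\in 0^{*}_{S\otimes_{R}V}$ over $S$, and a standard direct limit argument lets me take $V$ finitely generated; call it $W$. For (ii): tight closure ignores nilpotents and can be tested modulo minimal primes --- given a colon relation $cx^{q}=0$ in $\mathcal{F}^{e}(W)$ (with $q=p^{e}$), reducing $c$ modulo a minimal prime $\mathfrak{q}$ of $S$ keeps $\bar c\in(S/\mathfrak{q})^{\circ}$, while the intersection of the minimal primes being nilpotent lets one reassemble a single relation over $S$ from the finitely many component relations (multiply the component witnesses together with prime-avoidance multipliers, land in $\mathrm{nil}(S)\cdot\mathcal{F}^{e}(W)$, then apply a further Frobenius to kill the nilpotent part). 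Thus reduce to $S$ a domain; then localize $S$ at a maximal ideal and complete (using faithful flatness of completion to descend a colon relation back to $S$), and kill a minimal prime of the completion. Hence I may assume $S$ is a complete local domain; write $\mathfrak{p}=\ker(R\to S)$, so that $R/\mathfrak{p}\hookrightarrow S$.

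For (iii) and (iv): the substantive input is that $R$, and likewise every quotient $R/\mathfrak{p}$ (which still lies in the same class), has a completely stable test element --- an element of $R^{\circ}$ that remains a test element in $\widehat{R_{\mathfrak{q}}}$ for every $\mathfrak{q}$. This is proved by first treating F-finite reduced rings, where some $c\in R^{\circ}$ has $R_{c}$ regular and a power of $c$ is a completely stable test element (exploiting module-finiteness of $R^{1/p}$ over $R$), and then reducing the excellent-semilocal case to the F-finite case via the $\Gamma$-construction, which enlarges the coefficient field (adjoining $p^{-\infty}$-th roots of a $p$-basis) to gain F-finiteness while remaining faithfully flat. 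Granting this, pick a completely stable test element $c$ of $R$. If $c\notin\mathfrak{p}$, its image in the domain $S$ is nonzero, hence lies in $S^{\circ}$, and base-changing the relations $cx^{q}=0$ in $\mathcal{F}^{e}(W)$ along $R\to S$ gives $\bar c\,(1\otimes x)^{q}=0$ over $S$ for $q\gg 0$, so $1\otimes x\in 0^{*}_{S\otimes_{R}W}$. If instead every completely stable test element of $R$ lies in $\mathfrak{p}$, pass to $A:=R/\mathfrak{p}$: since $A\hookrightarrow S$, a completely stable test element of $A$ survives into $S^{\circ}$, so it suffices to know $\bar x\in 0^{*}_{W/\mathfrak{p}W}$ over $A$ --- i.e.\ persistence for the surjection $R\twoheadrightarrow R/\mathfrak{p}$. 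I would prove this last statement by Noetherian induction on $\dim R/\mathfrak{p}$: when $\mathfrak{p}$ is the maximal ideal, $R/\mathfrak{p}$ is a field and $0^{*}_{W}\subseteq\mathfrak{m}W$ (a relation $cx^{q}=0$ with $x\notin\mathfrak{m}W$ would, after composing with a map $W\twoheadrightarrow R/\mathfrak{m}$ sending $x$ to a unit, force $c\in\bigcap_{e}\mathfrak{m}^{[p^{e}]}=0$), so $\bar x=0$ and there is nothing to prove; the inductive step reduces --- via a completely stable test element of $R$, which survives modulo any minimal prime --- to the case of a height-one prime in a domain, to be finished using the induction hypothesis together with the test-element machinery.

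The main obstacle is exactly this last point: persistence along a surjection onto a prime at which every test element vanishes, i.e.\ the specialization of tight closure. That, together with the existence of completely stable test elements (resting on the $\Gamma$-construction), is where the genuine content lies; the module reductions, the reduction to a complete local domain, and the final base-change step are routine bookkeeping with the definitions of Section 2 and Lemma \ref{lemH1}. Since the present paper only invokes this statement, quoting it from Hochster's notes is the natural course, but the above is the shape of the argument behind it.
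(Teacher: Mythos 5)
The paper does not prove this lemma at all: it is quoted verbatim from Hochster's notes (the citation \cite[Persistence of Tight Closure on Page 183]{H}), so there is no in-paper argument to compare against, and quoting is indeed the intended ``proof.'' Your sketch does follow the architecture of the genuine Hochster--Huneke argument (reduce via Lemma \ref{lemH1} to $0\subseteq M$, pass to a domain quotient of $S$, produce a witness $c$ whose image lies in $S^{\circ}$, base-change the colon relations $cu^{q}=0$), and you correctly locate where the content sits. But judged as a proof it has genuine gaps, and they are exactly the points you flag and then skip. First, the existence of completely stable test elements for rings F-finite or essentially of finite type over an excellent semilocal ring is itself a major theorem (the F-finite reduced case plus the $\Gamma$-construction, HH 1994, \S 6); a two-line gesture does not discharge it. Second, and more seriously, the case where the test ideal of $R$ is contained in $\mathfrak{p}=\ker(R\to S)$ --- persistence along the surjection $R\twoheadrightarrow R/\mathfrak{p}$, i.e.\ ``tight closure specializes'' --- is the hard core of the theorem, and your inductive step (``reduce to a height-one prime in a domain, to be finished using the induction hypothesis together with the test-element machinery'') is a placeholder, not an argument; nothing in the proposal explains how the induction actually closes.

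A smaller but real issue is the reduction of $S$ to a complete local domain. Checking membership over all $S_{\mathfrak m}$ (or over $\widehat{S_{\mathfrak m}}$) and then descending runs into the hard direction of the localization problem unless the witness is a global element of $S$: faithful flatness descends a relation $c'(1\otimes u)^{q}\in\langle N\widehat{S_{\mathfrak m}}\rangle^{[q]}$ only when $c'$ comes from $S$ and lies in $S^{\circ}$, and in that situation one could have base-changed to $S$ directly without localizing. In the standard proof the complete-local-domain reduction serves a different purpose --- it lets one factor $R\to R_{Q}\to\widehat{R_{Q}}\to S$ (with $Q$ the contraction of $\mathfrak m_{S}$) and use complete stability of the test element to replace $R$ by the complete local ring $\widehat{R_{Q}}$ --- a step your outline does not set up. In short: your proposal is an accurate roadmap of the proof behind the cited lemma, but the two substantive ingredients (completely stable test elements and the specialization case) are asserted rather than proven, so as a self-contained argument it is incomplete; for the purposes of this paper the citation of \cite{H} suffices, as you yourself note.
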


Note that by Kunz's Theorem, if $R$ is an $F$-finite ring of characteristic $p$, then $R$ is excellent. And a local ring essentially of finite type over an excellent semilocal ring is still excellent local, so we just need to assume $R$ to be excellent local. This is how our mild condition comes out.

\smallskip

Now we are ready to prove the tight closure version of Lemma \ref{lemDF}.

\begin{lem}\label{lemDT}
Let $(R,\mathfrak{m})$ be a Noetherian excellent local ring of characteristic $p>0$, and $I$ an $R$-ideal. Let $S=R/I$ and $J$ an $R$-ideal that contains $I$. If for $\bar{J}$ the image of $J$ in $S$ we have $\bar{J}$ is tightly closed as an $S$-ideal, then $J$ is tightly closed as an $R$-ideal, i.e. every ideal of $R$ that contains $I$ is tightly closed.
\end{lem}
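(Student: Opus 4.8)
The plan is to mimic the proof of Lemma \ref{lemDF}, replacing Frobenius closure by tight closure, but now the passage between $R$ and $S=R/I$ is no longer automatic: tight closure does not behave well under arbitrary quotients, so I will need the persistence result (Lemma \ref{lemH2}) together with Lemma \ref{lemH1} to move a tight closure relation from $R$ down to $S$. First I would take $x\in J_R^*$ and aim to show $x\in J$. By Lemma \ref{lemH1} applied to the surjection $R\to R/J$, the statement $x\in J_R^*$ is equivalent to $\bar{x}\in 0^*_{R/J}$, where $\bar{x}$ is the image of $x$ in $R/J$. Since $I\subseteq J$, we have a natural surjection $R/I=S\twoheadrightarrow R/J$, and under this map the image of $\bar{J}\subseteq S$ is $0$; so $R/J$ is also a quotient of $S$, namely $R/J\cong S/\bar{J}$.

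Next I would invoke persistence of tight closure (Lemma \ref{lemH2}) for the ring homomorphism $R\to S$, which is legitimate precisely because $R$ is excellent local (hence $S$ is too, and the hypotheses of Lemma \ref{lemH2} are met). Applying it to the element $x\in J_R^*$ gives that the image of $x$ in $S$, call it $x'$, lies in $\bar{J}_S^*$. By hypothesis $\bar{J}_S^*=\bar{J}$, so $x'\in\bar{J}$, which says exactly that the image of $x$ in $S/\bar{J}\cong R/J$ is zero; but that image is $\bar{x}$, so $\bar{x}=0$, i.e. $x\in J$. Therefore $J_R^*\subseteq J$, and since the reverse inclusion is trivial, $J_R^*=J$. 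Because $J$ was an arbitrary ideal containing $I$, every ideal of $R$ containing $I$ is tightly closed.

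The only real subtlety — and the reason the proof is less trivial than its Frobenius-closure analog — is making sure persistence is applicable: one needs $R$ (or $S$) to fall under the umbrella of Lemma \ref{lemH2}, i.e. F-finite or essentially of finite type over an excellent semilocal ring. This is handled by the remark preceding the statement: excellence of $R$ (which we are assuming) covers the ``essentially of finite type over an excellent semilocal ring'' clause, since $R$ itself is such a ring, and $S=R/I$ inherits excellence. So the hard part is not a calculation but simply the bookkeeping of which tight-closure functoriality result applies and verifying its hypotheses; once persistence is in hand, the argument is as short as in Lemma \ref{lemDF}. One should also note in passing that $\langle S\otimes_R J\rangle = \bar J$ as an $S$-ideal, so the conclusion of Lemma \ref{lemH2} really does land in $\bar J_S^*$ as claimed.
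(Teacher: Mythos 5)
Your proof is correct and follows essentially the same route as the paper, invoking Lemma \ref{lemH1} together with the persistence result Lemma \ref{lemH2} under the excellence hypothesis. One small remark: by applying Lemma \ref{lemH2} directly to the pair $J\subseteq R$ (rather than first passing to $\bar J\subseteq R/I$ as $R$-modules, which is what the paper does), you get $x'\in\bar J_S^*=\bar J$, and since $I\subseteq J$ the preimage of $\bar J$ in $R$ is exactly $J$, so $x\in J$ already follows; this makes your first-paragraph use of Lemma \ref{lemH1} (reducing to $\bar x\in 0^*_{R/J}$) unnecessary, and your argument is in fact slightly more streamlined than the paper's.
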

\begin{proof}
Let $\varphi:R\rightarrow R/I$ be the natural projection map. Let $x\in J^*$, then by letting $R=G$, $R/I=M$, $\bar{J}=N$ and $J=H$ as in Lemma \ref{lemH1}, we can get that $\varphi(x)=\bar{x}\in \bar{J}_R^*$, where $\bar{J}_R^*$ is viewed as the tight closure of $\bar{J}$ as an $R$-ideal. Next by Lemma \ref{lemH2}, $\bar{x}=\bar{x}\otimes 1\in \bar{J}_S^*$, where this $\bar{J}_S^*$ is viewed as the tight closure of $\bar{J}$ as an $S$-ideal. By our condition, $\bar{J}_S^*=\bar{J}$, then $\bar{x}\in\bar{J}$, hence $x\in J$.
\end{proof}

Now we are ready to prove Proposition \ref{prop2}.

\begin{proof}[Proof of Proposition \ref{prop2}]
The proof is nearly the same as the proof of Proposition \ref{prop1}, except that we need to substitute Frobenius closed (or Frobenius closure) by tightly closed (or tight closure). Note that Theorem \ref{Alem1.2} substitutes Theorem \ref{LG(1)(2)} and Lemma \ref{lemDT} substitutes Lemma \ref{lemDF} in this proof.
\end{proof}

\bigskip

\section{Generalized Deformation Problems for $F$-injectivity and $F$-rationality}
In this section we use Proposition \ref{prop1} to solve the generalized deformation problem for $F$-injectivity (see Theorem \ref{ThmF-in}) and use Proposition \ref{prop2} to solve the generalized deformation problem for $F$-rationality (see Theorem \ref{ThmF-r}). Firstly we focus on $F$-injectivity. To do this, we need the following.
\smallskip

\begin{prop}\label{propQS}
Assume that $(R,\mathfrak{m})$ is an $F$-injective local ring of characteristic $p>0$. Then every ideal generated by a regular sequence is Frobenius closed.
\end{prop}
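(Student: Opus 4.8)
The plan is to realize the obstruction $(\underline x)^F/(\underline x)$ inside a local cohomology module of $R$ and then kill it with the Frobenius-injectivity hypothesis. So let $\underline x=x_1,\dots,x_c$ be a regular sequence and suppose $a\in(\underline x)^F$, say $a^q\in(x_1^q,\dots,x_c^q)$ with $q=p^e$; the goal is $a\in(\underline x)$. Passing to $\widehat R$ is harmless: it is again F-injective since $H^i_m(R)=H^i_{\widehat m}(\widehat R)$ compatibly with the natural Frobenius actions, $\underline x$ stays regular, and $(\underline x\widehat R)\cap R=(\underline x)$; so one may assume $R$ complete if convenient.

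The first point is colon-capturing, which for a regular sequence is automatic: powers of a regular sequence again form one, and an easy induction on $c$ gives $(x_1^n,\dots,x_c^n):(x_1\cdots x_c)^{n-1}=(\underline x)$ for all $n\geq 1$. Since $H^c_{(\underline x)}(R)=\varinjlim_n R/(x_1^n,\dots,x_c^n)$ with transition maps multiplication by $x_1\cdots x_c$, this says precisely that the canonical map $\rho\colon R/(\underline x)\to H^c_{(\underline x)}(R)$ is \emph{injective}, with $\rho(\bar a)=[a/(x_1\cdots x_c)]$. Moreover $F^e(\rho(\bar a))=[a^q/(x_1^q\cdots x_c^q)]=0$, because $a^q\in(x_1^q,\dots,x_c^q)$. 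Hence it suffices to prove that the natural Frobenius action is injective on $H^c_{(\underline x)}(R)$: then $F^e(\rho(\bar a))=0$ forces $\rho(\bar a)=0$, so $\bar a=0$ and $a\in(\underline x)$.

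To bring in the hypothesis on $R$, extend $\underline x$ to a system of parameters $x_1,\dots,x_d$ of $R$ (possible since $\dim R/(\underline x)=d-c$) and set $\underline y=x_{c+1},\dots,x_d$. Because $\underline x$ is a regular sequence, $H^q_{(\underline x)}(R)=0$ for $q\neq c$, so the Grothendieck spectral sequence $H^p_{(\underline y)}\bigl(H^q_{(\underline x)}(R)\bigr)\Rightarrow H^{p+q}_m(R)$ collapses onto the single row $q=c$ and yields Frobenius-equivariant isomorphisms $H^{c+j}_m(R)\cong H^{j}_{(\underline y)}\bigl(H^c_{(\underline x)}(R)\bigr)$ for every $j\geq 0$. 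Thus F-injectivity of $R$ is exactly the statement that the Frobenius action is injective on $H^{j}_{(\underline y)}\bigl(H^c_{(\underline x)}(R)\bigr)$ for all $j$.

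It remains to descend this to injectivity of Frobenius on $M:=H^c_{(\underline x)}(R)$ itself, and this is the heart of the matter. When $c=d$ the sequence $\underline x$ is a maximal regular sequence of length $\dim R$, so $R$ is Cohen-Macaulay, $M=H^d_m(R)$, and there is nothing more to do --- this recovers the classical fact that a Cohen-Macaulay F-injective local ring has Frobenius-closed parameter ideals. In general I would induct on $\delta=d-c$, peeling off the last parameter $x_d$: the $x_d$-power-torsion submodule $N=\Gamma_{(x_d)}(M)$ is Frobenius-stable (since $x_d^k u=0$ implies $x_d^{pk}F(u)=0$), injectivity of $F$ on $M$ follows from injectivity on $N$ and on $M/N$, and one controls these two pieces through the two-column spectral sequence for $(\underline y)=(x_{c+1},\dots,x_{d-1})+(x_d)$ together with the support identity $\operatorname{Supp}(M)\cap V(\underline y)=\{m\}$, which holds because $x_1,\dots,x_d$ is a system of parameters. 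The step I expect to be the genuine obstacle is exactly this transfer: injectivity of a Frobenius action passes up from a submodule and a quotient to the whole module, but it does \emph{not} automatically descend to a quotient, so the induction must be organized around torsion submodules and the support hypothesis rather than by a naive splitting. This is precisely the technical content of the theorem of Quy and Shimomoto relating F-injectivity to Frobenius closure of parameter ideals, which one may instead simply invoke, since a regular sequence is in particular a filter-regular sequence with respect to $m$.
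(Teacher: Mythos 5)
Your direct attack is correct up to the step you yourself flag. The injection $\rho\colon R/(\underline x)\hookrightarrow H^c_{(\underline x)}(R)$ via colon-capturing for a genuine regular sequence, the fact that $F^e\bigl(\rho(\bar a)\bigr)=0$ whenever $a^q\in(\underline x)^{[q]}$, the vanishing $H^q_{(\underline x)}(R)=0$ for $q\neq c$, and the observation that injectivity of a Frobenius action passes from an $F$-stable submodule and its quotient up to the whole module but not down to a quotient, are all fine. The unresolved step is exactly the one you name: F-injectivity hands you injectivity on $H^j_{(\underline y)}\bigl(H^c_{(\underline x)}(R)\bigr)$ for every $j$, and this simply does not imply injectivity on $H^c_{(\underline x)}(R)$ itself, which is what your ``it suffices to prove\ldots'' requires. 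The ``peel off $x_d$'' induction does not repair this, because the inductive hypothesis never gives you a handle on the quotient $M/\Gamma_{(x_d)}(M)$.

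What actually closes the gap in the Quy--Shimomoto argument is not a further spectral-sequence induction but a localization. If $(\underline x)^F\supsetneq(\underline x)$, pick $P\in\operatorname{Ass}\bigl((\underline x)^F/(\underline x)\bigr)\subseteq\operatorname{Ass}\bigl(R/(\underline x)\bigr)$ and localize at $P$; then $\underline x$ is a \emph{maximal} regular sequence in $R_P$, there is an $a'\in(\underline xR_P)^F\setminus\underline xR_P$ with $PR_P\cdot a'\subseteq\underline xR_P$, and the image of $\bar a'$ under $\rho$ lies in $H^0_{PR_P}\bigl(H^c_{(\underline x)}(R_P)\bigr)\cong H^c_{PR_P}(R_P)$ (your collapsed spectral sequence, now applied after localizing, with left-exactness of $\Gamma_{PR_P}$), where it is nonzero yet killed by $F^e$ --- a contradiction once one knows that $R_P$ is again F-injective. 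That last ingredient is the real technical input: F-injectivity must localize. This is what makes the paper's proof a citation of [QS, Proposition~3.13] \emph{together with} the observation that the F-finiteness hypothesis used in [QS, Lemma~3.11] can be removed via [MP, Theorem~4.13]. Your closing ``one may simply invoke Quy--Shimomoto'' is the same move the paper makes, but you should be aware that the statement you would cite carries an F-finiteness hypothesis in the original, and that discharging it (via Ma--Polstra's localization theorem) is the small but nontrivial content of the paper's remark. The parenthetical about regular sequences being filter-regular is true but is not what licenses the citation; the relevant statement of [QS] is literally about regular sequences.
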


\begin{rem}
This proposition is almost the same as \cite[Proposition 3.13]{QS}, except that our proposition remove the condition that $R$ is $F$-finite. Actually this condition can be removed since in the original proof of \cite[Proposition 3.13]{QS}, the $F$-finiteness assumption is used in \cite[Lemma 3.11]{QS}, which says that if $(R,\mathfrak{m},K)$ is an $F$-injective local ring and $R$ is $F$-finite, then $R_P$ is $F$-injective for all $P\in \spec(R)$. But this statement of \cite[Lemma 3.11]{QS} does not require $R$ to be $F$-finite according to \cite[Theorem 4.13]{MP}.
\end{rem}

Now we are ready to prove our first main Theorem \ref{ThmF-in}.

\begin{thm}\label{ThmF-in}
Let $(R,\mathfrak{m})$ be a Noetherian local Cohen-Macaulay ring of characteristic $p>0$, $\dim\,R=d$. Suppose that there exists $I$ an $R$-ideal such that $\pd_R R/I<\infty$ and $R/I$ is $F$-injective, then $R$ is $F$-injective.
\end{thm}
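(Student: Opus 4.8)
The plan is to translate F-injectivity of the Cohen-Macaulay ring $R$ into Frobenius-closedness of parameter ideals, and then to obtain the latter from $R/I$ by means of Proposition \ref{prop1}.

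First I would produce the input needed for Proposition \ref{prop1}. Since $R/I$ is F-injective, Proposition \ref{propQS}, applied to the local ring $R/I$, shows that every ideal of $R/I$ generated by a regular sequence is Frobenius closed; in particular every maximal regular sequence on $R/I$ is Frobenius closed. Combined with the hypothesis $\pd_R R/I<\infty$, this is exactly what Proposition \ref{prop1} requires, and it yields: every ideal of $R$ generated by a regular sequence is Frobenius closed.

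Next I would carry out the translation. As $R$ is Cohen-Macaulay, $H_m^i(R)=0$ for $i<d$, so $R$ is F-injective if and only if the natural Frobenius action on $H_m^d(R)$ is injective. Fix a system of parameters $x_1,\ldots,x_d$ of $R$ and put $x=x_1\cdots x_d$; then every element of $H_m^d(R)$ is a class $[a/x^t]$ with $a\in R$, $t>0$, and $F([a/x^t])=[a^p/x^{tp}]$. If $F([a/x^t])=0$, then, $R$ being Cohen-Macaulay, $a^p\in(x_1^{tp},\ldots,x_d^{tp})=(x_1^t,\ldots,x_d^t)^{[p]}$, i.e.\ $a\in(x_1^t,\ldots,x_d^t)^F$. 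But $x_1^t,\ldots,x_d^t$ is again a system of parameters of $R$, hence a regular sequence, so the ideal $(x_1^t,\ldots,x_d^t)$ is Frobenius closed by the previous step; therefore $a\in(x_1^t,\ldots,x_d^t)$, whence $[a/x^t]=0$. Thus $F$ is injective on $H_m^d(R)$ and $R$ is F-injective.

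The real content is concentrated in the first step, and within it in Proposition \ref{prop1} -- the transfer of Frobenius-closedness of regular sequences from $R/I$ up to $R$ -- whose proof is the homological heart of Section 3 (via Theorem \ref{LG(1)(2)} and Lemma \ref{lemma3.3}). Once that and Proposition \ref{propQS} are available, the argument above is essentially bookkeeping, using only the explicit Frobenius action $[a/x^t]\mapsto[a^p/x^{tp}]$ on $H_m^d(R)$ and the Cohen-Macaulay vanishing criterion $[a/x^t]=0\iff a\in(x_1^t,\ldots,x_d^t)$ recorded in Section 2. I do not expect a genuine obstacle here; the one point meriting care is that arbitrary powers of a system of parameters again form a system of parameters (and hence, over the Cohen-Macaulay ring $R$, a regular sequence), so that the first step can be invoked for every $t>0$.
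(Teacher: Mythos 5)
Your argument matches the paper's proof step for step: the paper likewise invokes Proposition~\ref{propQS} to get Frobenius-closedness of ideals generated by regular sequences in $R/I$, feeds this into Proposition~\ref{prop1} to conclude the same for $R$, and then uses Cohen-Macaulayness to identify these with parameter ideals. The only difference is that, where you unwind the \v{C}ech description of the Frobenius action on $H_m^d(R)$ to deduce F-injectivity directly, the paper simply cites \cite[Corollary 3.9]{QS} for the implication (in the Cohen-Macaulay setting) from Frobenius-closedness of all parameter ideals to F-injectivity; your concluding paragraph is precisely a correct reproof of that cited fact, including the necessary observation that $x_1^t,\ldots,x_d^t$ remains a system of parameters and hence a regular sequence for every $t>0$.
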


\begin{proof}
Since $R/I$ is $F$-injective, then by Proposition \ref{propQS}, every ideal generated by a regular sequence of $R/I$ is Frobenius closed. Then by Proposition \ref{prop1}, every regular sequence on $R$ is Frobenius closed. Since $R$ is Cohen-Macaulay, that means every parameter ideal is Frobenius closed. Finally by \cite[Corollary 3.9]{QS}, we get that $R$ is $F$-injective.
\end{proof}

For the remaining of this section, we solve the generalized deformation problem for $F$-rationality under mild assumptions.

\begin{thm}\label{ThmF-r}
Let $(R,\mathfrak{m})$ be a Noetherian excellent local ring of characteristic $p>0$, and $\dim\,R=d$. Suppose there exists $I$ an $R$-ideal such that $\pd_R R/I<\infty$ and $R/I$ is $F$-rational, then $R$ is $F$-rational.
\end{thm}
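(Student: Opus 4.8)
The plan is to mimic the structure of the proof of Theorem \ref{ThmF-in}, replacing Frobenius closure by tight closure throughout and using Smith's local cohomology characterization of F-rationality (Remark \ref{rem2.9}) in place of the criterion from \cite{QS}. The first step is to pass from the F-rationality of $R/I$ to a statement about regular sequences: since $R/I$ is F-rational it is Cohen-Macaulay (Remark \ref{rem2.8}), so every system of parameters of $R/I$ is a maximal regular sequence, and by Definition \ref{def2.7} every such ideal is tightly closed. Thus every maximal regular sequence on $R/I$ is tightly closed, and since $\pd_R R/I < \infty$ and $R$ is excellent local, Proposition \ref{prop2} applies and yields that every regular sequence on $R$ is tightly closed.

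The second step is to deduce that $R$ is Cohen-Macaulay. This is immediate from the cited fact that Cohen-Macaulayness satisfies the generalized deformation problem: $R/I$ is Cohen-Macaulay and $\pd_R R/I<\infty$, so by \cite[Theorem 2.1]{AFH} (item (2) in the introduction) $R$ is Cohen-Macaulay. Consequently every system of parameters of $R$ is a regular sequence, hence by the previous paragraph every parameter ideal of $R$ is tightly closed; together with the hypothesis that $R$ is excellent (so a homomorphic image of a Cohen-Macaulay, indeed regular, ring via a Cohen presentation of its completion — or one reduces to the complete case first), Definition \ref{def2.7} already says $R$ is F-rational. In fact once $R$ is Cohen-Macaulay with every parameter ideal tightly closed, one can instead invoke Remark \ref{rem2.9}: it suffices to check $0^*_{H^d_m(R)}=0$, and a standard direct-limit argument shows that $0^*_{H^d_m(R)}=\bigcap_{t} (x_1^t,\dots,x_d^t)^*_R / (x_1^t,\dots,x_d^t)_R$ for a fixed system of parameters, which vanishes because each $(x_1^t,\dots,x_d^t)$ is a parameter ideal and hence tightly closed. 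Either route closes the argument.

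The main obstacle is the excellence hypothesis and where it is genuinely needed. It enters in two places: first, to apply Proposition \ref{prop2} (which rests on Lemma \ref{lemDT}, whose proof uses persistence of tight closure, Lemma \ref{lemH2}, valid over excellent local rings); and second, in Definition \ref{def2.7} of F-rationality we must exhibit $R$ as a homomorphic image of a Cohen-Macaulay ring — for $R$ excellent (hence a quotient of a regular ring after completion, or directly if $R$ is already a quotient of a Cohen-Macaulay ring) this is standard, but one should be careful whether the definition is being applied to $R$ or to $\widehat{R}$, and reconcile this with the fact that F-rationality localizes/completes well for excellent rings. I would therefore spell out explicitly, perhaps via a short reduction to the complete case using that $\pd_{\widehat R}\widehat R/I\widehat R<\infty$, that all the hypotheses of Proposition \ref{prop2} and of Remark \ref{rem2.9} are met, and then assemble the three ingredients — every regular sequence on $R$ is tightly closed, $R$ is Cohen-Macaulay, $R$ is a homomorphic image of a Cohen-Macaulay ring — to conclude that $R$ is F-rational.
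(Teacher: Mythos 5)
Your proposal is correct and follows essentially the same route as the paper's proof: pass from F-rationality of $R/I$ to Cohen-Macaulayness of $R/I$ (Remark \ref{rem2.8}) and tight-closedness of maximal regular sequences there, apply Proposition \ref{prop2} to conclude every regular sequence on $R$ is tightly closed, deduce $R$ is Cohen-Macaulay from \cite[Theorem 2.1]{AFH}, and then conclude F-rationality. One small simplification you could make: your worry about exhibiting $R$ as a homomorphic image of a Cohen-Macaulay ring via a Cohen presentation is unnecessary, since once $R$ is shown to be Cohen-Macaulay it is trivially a quotient of a Cohen-Macaulay ring (namely itself), so the excellence hypothesis is needed only to feed Proposition \ref{prop2}.
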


\begin{proof}
 By Definition \ref{def2.7} and Remark \ref{rem2.8}, $F$-rationality means the ring is Cohen-Macaulay and every parameter ideal is tightly closed. Since $R/I$ is $F$-rational, it is Cohen-Macaulay, so every maximal regular sequence on $R/I$ is tightly closed. So by Proposition \ref{prop2}, every regular sequence on $R$ is tightly closed. Once we show that $R$ is Cohen-Macaulay, we can get that every parameter ideal is tightly closed. So it suffices to show that $R$ is Cohen-Macaulay. Since $R/I$ is $F$-rational, it is Cohen-Macaulay by Remark \ref{rem2.8}. Hence by \cite[Theorem 2.1]{AFH}, we have that $R$ is Cohen-Macaulay.
\end{proof}

\bigskip
\section{Generalized Deformation Problems for Serre's Conditions}
To prove that a ring $R$ is $R_k$ or $S_k$, it is natural to suppose that we have a prime ideal $P$ of $R$ that satisfies some conditions (For $R_k$ it is $\height\,P\leqslant k$, for $S_k$ it is $\depth\,R_P\leqslant k$). And usually the case $I\subseteq P$ is kind of easy to deal with. In this section we set up two lemmas for the case $I\nsubseteq P$ which says that if $P+I$ is $\mathfrak{m}$-primary in a Noetherian local ring $(R,\mathfrak{m})$, then $\pd_R R/I\geqslant \dim\,R/P$ and $\pd_R R/I\geqslant \depth\,R-\depth\,R_P$. The former helps us to prove the result for $R_k$ and the latter helps us to prove the result for $S_k$. And both of them help us to prove the result for $R_k+S_{k+1}$. Then we get the conclusions for normal rings and reduced rings as corollaries. At last we cite Auslander's Zerodivisor Conjecture which is a theorem by Robert (see \cite{R2}) and the corollary of generalized deformation problem for reduced rings to prove the theorem for domains.

\begin{lem}\label{defreg}\cite[Theorem 6.1(1)]{AFH}
Let $(R,\mathfrak{m})$ be a Noetherian local ring, $I$ an $R$-ideal, and $\pd_R R/I<\infty$. If $R/I$ is regular, then $R$ is regular.
\end{lem}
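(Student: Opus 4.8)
The plan is to reduce the statement to the Auslander--Buchsbaum--Serre characterization of regularity: a Noetherian local ring $(R,m)$ with residue field $K=R/m$ is regular if and only if $\pd_R K<\infty$. Since $R/I$ is a regular local ring and $K=R/m$ is naturally its residue field (namely $K\cong(R/I)/(m/I)$), we have $\pd_{R/I}K=\dim R/I<\infty$. Thus the whole problem is to transfer finiteness of projective dimension of $K$ along the surjection $R\twoheadrightarrow R/I$, using the hypothesis $\pd_R R/I<\infty$.

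The key step is a change-of-rings inequality: writing $S=R/I$, if $\pd_R S<\infty$ then for every finitely generated $S$-module $M$ one has
$$\pd_R M\le \pd_R S+\pd_S M.$$
I would prove this by induction on $n=\pd_S M$. If $n=0$, then $M$ is a direct summand of a finite free $S$-module, so $\pd_R M\le\pd_R S$. For the inductive step, choose a short exact sequence $0\to M'\to F\to M\to 0$ of $S$-modules with $F$ finite free; then $\pd_S M'=n-1$, and from the long exact sequence of $\tor_\bullet^R(-,K)$ associated to this sequence one gets $\pd_R M\le\max\{\pd_R F,\ \pd_R M'+1\}$. Since $\pd_R F=\pd_R S$ and, by the inductive hypothesis, $\pd_R M'\le\pd_R S+n-1$, the right-hand side is at most $\pd_R S+n$, as claimed.

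Applying the inequality with $M=K$ yields $\pd_R K\le\pd_R(R/I)+\dim(R/I)<\infty$, so $R$ is regular by Auslander--Buchsbaum--Serre, which finishes the proof. I do not expect a genuine obstacle: the only non-formal ingredient is the change-of-rings inequality, which is classical; alternatively one can invoke the base-change spectral sequence $\tor^S_p\!\big(K,\tor^R_q(S,K)\big)\Rightarrow\tor^R_{p+q}(K,K)$, whose $E^2$-page vanishes for $p>\dim S$ and for $q>\pd_R S$, forcing $\tor^R_n(K,K)=0$ for $n\gg 0$. The only point worth stating carefully is that regularity of $R/I$ is used solely through $\pd_{R/I}K<\infty$, and that no excellence, equidimensionality, or catenarity hypothesis is needed here (in contrast to the $R_k$ case treated later).
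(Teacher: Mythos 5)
Your proof is correct. The paper itself does not prove this lemma but cites \cite[Theorem 6.1(1)]{AFH}, so there is no internal proof to compare against; your argument is a clean, self-contained derivation that gives exactly the standard route. The two ingredients you use are sound: the change-of-rings bound $\pd_R M\le\pd_R S+\pd_S M$ (for $S=R/I$ with $\pd_R S<\infty$ and $M$ a finitely generated $S$-module) is established correctly by induction via $\pd_R M\le\max\{\pd_R F,\pd_R M'+1\}$ from the long exact sequence of $\tor^R(-,K)$, and specializing to $M=K$ together with Auslander--Buchsbaum--Serre finishes the argument. The spectral-sequence alternative you sketch is also valid (noting that each $\tor^R_q(S,K)$ is a $K$-vector space, so $\tor^S_p(\tor^R_q(S,K),K)$ vanishes once $p>\dim S$). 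The one substantive remark worth making is that Avramov--Foxby--Halperin prove a far more general descent statement for arbitrary local homomorphisms of finite flat dimension, requiring machinery (André--Quillen homology, Poincaré series) that your argument sidesteps entirely by exploiting surjectivity of $R\twoheadrightarrow R/I$; for the case actually used in this paper your elementary proof suffices, and your closing observation that no excellence/equidimensionality/catenarity hypotheses enter is accurate and worth keeping.
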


Next, we list the New Intersection Theorem to get a sequence of inequalities (Corollary \ref{inequality}) for an $R$-ideal $I$ of finite projective dimension, which is used in the proof of our first important theorem for $R_k$ (Theorem \ref{r}) in this section.

\begin{thm}[New Intersection Theorem]\label{NIT}\cite[Theorem 13.4.1]{R}
Let $(R,\mathfrak{m})$ be a local ring and $\dim\,R=d$. Let
$$F_{\bullet}:0\rightarrow F_k\rightarrow\ldots\rightarrow F_0\rightarrow 0$$
be a nonexact complex with $F_i$ free and $l(H_i(F_{\bullet}))<\infty$ for any $i$. Then $k\geqslant d$.
\end{thm}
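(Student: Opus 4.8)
The statement is the New Intersection Theorem, and the approach I would take is to deduce it from the existence of big Cohen--Macaulay modules: this is the route of Peskine--Szpiro and Hochster in equal characteristic, and it becomes unconditional in all characteristics once one invokes Andr\'e's big Cohen--Macaulay algebras. The first move is a harmless base change: $\widehat R$ is faithfully flat over $R$, preserves the dimension, and $F_\bullet\otimes_R\widehat R$ is again a nonexact finite free complex with finite-length homology, so I may assume $(R,m)$ is complete (and $d=\dim R\geq 1$, the case $d=0$ being vacuous). A complete local ring carries a big Cohen--Macaulay module $M$, i.e.\ a (not necessarily finitely generated) module on which some system of parameters $\underline y$ is a regular sequence with $\underline y M\neq M$. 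For such an $M$ the whole of $\underline y$ lies in $m$, so $\depth(m,M)\geq d$, that is $H^i_m(M)=0$ for all $i<d$; and $\underline y M\neq M$ forces $M/mM\neq 0$ because $R/\underline y R$ is Artinian local. Securing $M$ is the one genuinely hard input.

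Second, I would normalize $F_\bullet$ so that $H_0(F_\bullet)\neq 0$. A bounded complex of finitely generated free modules over a local ring is the direct sum of a minimal complex (differential entries in $m$) and trivial pieces $0\to R\xrightarrow{\ \mathrm{id}\ }R\to 0$; discarding the trivial pieces leaves the homology and the (non)exactness unchanged and does not increase the length, so the minimal part is itself nonexact. Discarding further an initial run of zero terms (a relabeling), I obtain a minimal complex with $F_0\neq 0$, and then $H_0(F_\bullet)=F_0/\im d_1$ is a nonzero quotient of $F_0$, hence nonzero, and of finite length by hypothesis. The length $k$ has only gone down, so it is still $<d$ if it was; rename this complex $F_\bullet$.

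Third, the homological core, run under the assumption $k<d$ that I want to contradict. Set $C_\bullet:=F_\bullet\otimes_R M$. For a prime $P\neq m$ the localization $(F_\bullet)_P$ is an exact, hence split exact, complex of free $R_P$-modules, so $(C_\bullet)_P$ is exact; thus every $H_i(C_\bullet)$ is killed by a power of $m$. I then compute the hyper-local-cohomology $\mathbb H^\bullet_m(C_\bullet)$ with the two spectral sequences of local cohomology. Filtering by the terms of $C_\bullet$: each term is a finite direct sum of copies of $M$, so $H^j_m$ of it vanishes for $j<d$, and the spectral sequence collapses to $\mathbb H^n_m(C_\bullet)\cong H_{d-n}\!\left(F_\bullet\otimes_R H^d_m(M)\right)$, which is zero whenever $d-n>k$, in particular for all $n\leq 0$ (since $k<d$). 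Filtering by the homology of $C_\bullet$: each $H_i(C_\bullet)$ is $m$-torsion, so $H^s_m$ of it vanishes for $s>0$ and the spectral sequence collapses to $\mathbb H^{-q}_m(C_\bullet)\cong H_q(C_\bullet)$ for $q\geq 0$. Putting these together, $H_q(C_\bullet)=0$ for every $q\geq 0$; in particular $H_0(C_\bullet)=H_0(F_\bullet)\otimes_R M=0$. But $H_0(F_\bullet)$ is finitely generated and nonzero and $M/mM\neq 0$, so $H_0(F_\bullet)\otimes_R M\neq 0$ --- a contradiction. Hence $k\geq d$.

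The main obstacle is entirely concentrated in the first paragraph, namely the existence of a big Cohen--Macaulay module over a complete local ring: in equal characteristic this is Hochster's theorem, proved via the Frobenius endomorphism and Artin approximation, and in mixed characteristic it follows from Andr\'e's existence theorem for big Cohen--Macaulay algebras, proved with perfectoid methods; everything downstream is formal homological algebra. If one wishes to avoid big Cohen--Macaulay modules altogether, the argument recorded as \cite[Theorem 13.4.1]{R} instead proceeds through intersection theory: one attaches to $F_\bullet$ its local Chern character operating on the Chow group of $R$, and the local Riemann--Roch formula together with the vanishing of these operators outside the range controlled by the length of $F_\bullet$ precludes a nonexact finite free complex of length $<\dim R$ with finite-length homology; that proof is uniform in all characteristics.
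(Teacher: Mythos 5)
This statement is quoted in the paper as a known theorem with a citation to Roberts' book (\cite[Theorem 13.4.1]{R}); the paper gives no proof of it, and the proof in the cited source is the intersection-theoretic one you mention at the end: local Chern characters acting on the Chow group plus the local Riemann--Roch formula, which is uniform in all characteristics. Your proposal takes the other classical route, through big Cohen--Macaulay modules, and as a sketch it is essentially correct: completion is harmless, splitting off the trivial summands of the free complex and truncating to make it minimal with $F_0\neq 0$ is legitimate and gives $H_0(F_\bullet)\neq 0$ by Nakayama, and the two hypercohomology spectral sequences for $\mathbb{H}^\bullet_m(F_\bullet\otimes_R M)$ do collapse as you say (for the first one you should also note $H^j_m(M)=0$ for $j>d$, which is automatic from the \v{C}ech complex on a $d$-element system of parameters, so that the $E_1$-page really sits in the single column $j=d$). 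One small imprecision: since $M$ is not finitely generated, $H_i(F_\bullet\otimes M)$ having support in $\{m\}$ only gives that each element is killed by a power of $m$, not that the whole module is killed by a single power; but elementwise $m$-power torsion is exactly what makes the higher local cohomology of these homology modules vanish, so the collapse of the second spectral sequence is unaffected. The trade-off between the two arguments is the one you identify: your route makes everything downstream formal homological algebra but concentrates all the difficulty in the existence of big Cohen--Macaulay modules (Hochster in equal characteristic, as in his CBMS notes cited as \cite{H1}, and Andr\'e's perfectoid methods in mixed characteristic), whereas Roberts' proof avoids big Cohen--Macaulay objects entirely at the price of the local Chern character machinery. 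For the purposes of this paper either input is a black box, and the paper's choice to cite \cite{R} rather than reprove the theorem is consistent with how you would also have to cite the deep existence theorem in your approach.
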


\begin{prop}[Hochster \cite{H1}, Peskine-Szpiro \cite{PS} and Roberts \cite{R1}]\label{propdim}
Let $(R,\mathfrak{m})$ be a Noetherian local ring, $I$ an $R$-ideal such that $\pd_R R/I<\infty$, then $\dim\, R\leqslant \dim\,R/I+\pd_R R/I$.
\end{prop}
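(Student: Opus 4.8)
The plan is to deduce the bound from the New Intersection Theorem (Theorem \ref{NIT}) applied to the complex obtained by tensoring a finite free resolution of $R/I$ with a Koszul complex. Write $p=\pd_R R/I$ and $n=\dim R/I$; we may assume $I$ is a proper ideal, since otherwise the inequality is trivial. First I would pick elements $x_1,\dots,x_n\in m$ whose images in $R/I$ form a system of parameters of $R/I$, so that $R/(I+(x_1,\dots,x_n))$ is Artinian and nonzero. Let $F_\bullet\colon 0\to F_p\to\cdots\to F_0\to 0$ be a finite free resolution of $R/I$ over $R$, and let $K_\bullet=K_\bullet(x_1,\dots,x_n;R)$ be the Koszul complex, a length-$n$ complex of finitely generated free $R$-modules.

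Next I would form the complex $T_\bullet=F_\bullet\otimes_R K_\bullet$ (the total complex of the associated double complex). Each $T_i$ is a finite direct sum of tensor products of free $R$-modules, hence free; the complex has length $n+p$, and its top term $T_{n+p}=F_p\otimes_R K_n$ is nonzero. The key point is the computation of its homology: since $F_\bullet\to R/I$ is a quasi-isomorphism of bounded-above complexes and $K_\bullet$ is a bounded complex of flat modules, tensoring preserves the quasi-isomorphism, so $H_i(T_\bullet)\cong H_i\bigl(K_\bullet(x_1,\dots,x_n;\,R/I)\bigr)$, the Koszul homology of $\underline{x}$ on $R/I$. (Equivalently, one runs the spectral sequence of the double complex taking $F_\bullet$-homology first, which collapses onto the single row giving exactly these Koszul homology modules.) Each such module is a subquotient of a finite free $R/I$-module and is annihilated by $(x_1,\dots,x_n)$, hence is annihilated by $I+(x_1,\dots,x_n)$; being finitely generated over the Artinian ring $R/(I+(x_1,\dots,x_n))$, it has finite length. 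Moreover $H_0(T_\bullet)=R/(I+(x_1,\dots,x_n))\neq 0$, so $T_\bullet$ is not exact.

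Finally, $T_\bullet$ is a nonexact complex of finitely generated free $R$-modules of length $n+p$ all of whose homology modules have finite length, so Theorem \ref{NIT} gives $n+p\geqslant\dim R$, that is, $\dim R\leqslant\dim R/I+\pd_R R/I$.

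The only step that needs genuine care is the homology identification $H_i(F_\bullet\otimes_R K_\bullet)\cong H_i(K_\bullet(\underline{x};R/I))$ together with the finite-length verification; everything else is bookkeeping. An equivalent route, which may read more cleanly, is to observe that $F_\bullet\otimes_R K_\bullet$ computes $\tor_i^R(R/I,R/(\underline{x}))$ because $F_\bullet$ is a free resolution of $R/I$ and $K_\bullet$ consists of flat modules; then finite length follows since each $\tor_i^R(R/I,R/(\underline{x}))$ is a finitely generated $R$-module killed by $I+(\underline{x})$, and nonexactness from $\tor_0^R(R/I,R/(\underline{x}))=R/(I+(\underline{x}))\neq 0$.
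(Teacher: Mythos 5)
Your proof is correct and follows essentially the same route as the paper's: tensor a finite free resolution of $R/I$ with the Koszul complex on a system of parameters for $R/I$, identify the homology with Koszul homology of $\underline{x}$ on $R/I$ (hence finite length), and invoke the New Intersection Theorem. The remark that the same homology can be read as $\tor^R_i(R/I,R/(\underline{x}))$ is a harmless rephrasing of the same computation.
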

\begin{proof}
Let $s=\dim\, R/I$. Let $x_1,\ldots,x_s$ be a system of parameters of $R/I$, and let
$$F_{\bullet}:0\rightarrow F_g\rightarrow\ldots\rightarrow F_0\rightarrow 0$$
be a minimal free resolution of $R/I$, where $g=\pd_R R/I$. Consider $C_{\bullet}=F_{\bullet}\otimes_R K_{\bullet}(\underline{x})$. Since $F_{\bullet}$ is quasi-isomorphic with the complex
$$F'_{\bullet}:\ldots\rightarrow 0\rightarrow 0\rightarrow R/I\rightarrow 0$$
where $R/I$ is on the position of homological degree $0$ of the complex, we have that
$$H_i(C_{\bullet})=H_i(K_{\bullet}(\underline{x})\otimes_R F_{\bullet})\cong H_i(K_{\bullet}(\underline{x})\otimes_R F'_{\bullet})\cong H_i(K_{\bullet}(\underline{x})\otimes_R R/I)\cong H_i(\underline{x};R/I)$$
which is annihilated by $(\underline{x})+I$. Since $(\underline{x})+I$ is an $\mathfrak{m}$-primary ideal, we have that $l(H_i(C_{\bullet}))<\infty$. And since $F_{\bullet}$ has length $g$ and $K_{\bullet}(\underline{x})$ has length $s$, we have that $C_{\bullet}$ has length $g+s$. So $C_i\neq 0$ for $0\leqslant i\leqslant \pd_R R/I+s$. Also $H_0(C_{\bullet})\cong H_0(\underline{x}; R/I)\cong R/(I+(\underline{x}))\neq 0$. Now by Theorem \ref{NIT}, we have that $g+s\geqslant \dim\,R$, i.e. $\pd_R R/I+\dim\, R/I\geqslant \dim\, R$.
\end{proof}

Hence we have the following corollary:

\begin{cor}\label{inequality}
Let $(R,\mathfrak{m})$ be a Noetherian local ring, $I$ an $R$-ideal with $\pd_R R/I<\infty$. Then $\depth_I R\leqslant \height\,I\leqslant \dim\,R-\dim\,R/I\leqslant\pd_R R/I$. In particular, if $I$ is perfect, all these become equal.
\end{cor}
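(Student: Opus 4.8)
The plan is to verify the chain of three inequalities one term at a time --- each being either a classical fact or Proposition \ref{propdim} already proved above --- and then to observe that perfectness of $I$ forces the two outer terms to coincide, collapsing the whole chain to a string of equalities.

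First I would recall that $\depth_I R$ is the length of a maximal $R$-regular sequence contained in $I$, i.e.\ the grade $\grade\, I$, and that grade never exceeds height: if $x_1,\ldots,x_n\in I$ is a regular sequence, then $(x_1,\ldots,x_n)$ has height exactly $n$, whence $\height\, I\geqslant\height\,(x_1,\ldots,x_n)=n$; taking $n=\grade\, I$ gives $\depth_I R\leqslant\height\, I$. For the middle inequality I would invoke the standard dimension formula $\height\, Q+\dim\, R/Q\leqslant\dim\, R$, valid for every prime $Q$ of a Noetherian local ring: pick a minimal prime $P$ over $I$ with $\dim\, R/P=\dim\, R/I$ (such a $P$ exists since $\dim\, R/I$ is computed along a chain from a minimal prime of $R/I$); then $\height\, I\leqslant\height\, P\leqslant\dim\, R-\dim\, R/P=\dim\, R-\dim\, R/I$. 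The rightmost inequality $\dim\, R-\dim\, R/I\leqslant\pd_R R/I$ is exactly Proposition \ref{propdim}, established via the New Intersection Theorem, so there is nothing more to do there.

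For the final assertion, suppose $I$ is perfect, so $\grade\, I=\pd_R R/I$ by definition. Since $\depth_I R=\grade\, I$, the first and last quantities in
\[
\depth_I R\leqslant\height\, I\leqslant\dim\, R-\dim\, R/I\leqslant\pd_R R/I
\]
are equal, hence all four quantities equal $\grade\, I$; in particular one recovers the equidimensionality-type identity $\height\, I=\dim\, R-\dim\, R/I$ characteristic of perfect ideals.

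I do not anticipate any real obstacle here: the only nontrivial input is Proposition \ref{propdim}, and the auxiliary facts ($\grade\leqslant\height$ and $\height\, Q+\dim\, R/Q\leqslant\dim\, R$) are standard. The one point that deserves explicit mention is the identification $\depth_I R=\grade\, I$, since it is precisely this that lets the definition of a perfect ideal be fed into the squeeze at the end.
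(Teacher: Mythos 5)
Your argument is correct and coincides with the route the paper intends: the paper states the corollary without proof, immediately after Proposition~\ref{propdim}, precisely because it follows from that proposition combined with the two classical facts $\grade I\leqslant\height I$ and $\height P+\dim R/P\leqslant\dim R$, and your write-up supplies exactly those ingredients and then uses perfectness to squeeze the chain to equalities.
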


\begin{lem}\label{perfect}
Let $(R,\mathfrak{m})$ be a Noetherian local ring, $I$ a perfect $R$-ideal with $\pd_R R/I<\infty$. Then for any $P\in\spec(R)$, we have that $IR_P$ is a perfect $R_P$-ideal.
\end{lem}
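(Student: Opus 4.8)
The plan is to pass to localization and use the two standard facts that (i) perfectness is equivalent to the equality $\grade I = \pd_R R/I$, and (ii) both grade and projective dimension behave well under localization, in the right direction. First I would set $g = \pd_R R/I = \grade I$ (this equality is exactly the hypothesis that $I$ is perfect). Localizing a (minimal) free resolution $F_\bullet$ of $R/I$ of length $g$ at a prime $P$ gives a free resolution of $(R/I)_P = R_P/IR_P$ over $R_P$ of length at most $g$, so $\pd_{R_P} R_P/IR_P \le g$. If $IR_P = R_P$ then $R_P/IR_P = 0$ and the statement is either vacuous or trivially true (depending on the convention, a unit ideal has $\pd = -\infty$ and grade $= +\infty$, so I would note that we may assume $I \subseteq P$); so assume $I \subseteq P$, whence $R_P/IR_P \ne 0$ and $\pd_{R_P} R_P/IR_P$ is a genuine non-negative integer bounded above by $g$.

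Next I would produce the reverse inequality at the level of grade. Since $\grade I = g$, there is a regular sequence $x_1,\dots,x_g \in I$ on $R$; the point is that this sequence remains regular after localizing at any prime $P$ containing $I$. Concretely, $\grade_I R \le \grade_{IR_P} R_P$ because $\grade$ can only go up under localization (a regular sequence on $R$ contained in $I$ maps to a regular sequence on $R_P$ contained in $IR_P$, using that $R_P$ is a flat $R$-algebra), so $\grade IR_P \ge g$. Combining with the general bound $\grade J \le \pd_{R_P} R_P/J$ (valid for any ideal $J$ of finite projective dimension in a Noetherian local ring — this is the inequality side of the Auslander–Buchsbaum formula, or equivalently the easy half of Corollary \ref{inequality} applied to $R_P$), we get
\[
g \le \grade IR_P \le \pd_{R_P} R_P/IR_P \le g,
\]
forcing equality throughout. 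Hence $\grade IR_P = \pd_{R_P} R_P/IR_P$, i.e. $IR_P$ is a perfect $R_P$-ideal, as desired.

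I do not expect a serious obstacle here; the only delicate point is bookkeeping the two boundary cases — $P \not\supseteq I$ (handled by the remark that $IR_P = R_P$, making the claim trivial) and ensuring the convention for $\grade$ and $\pd$ of the unit ideal is consistent with the definition of perfect used in the paper. The substantive content — that a length-$g$ resolution localizes to a length-$\le g$ resolution, and that a length-$g$ regular sequence localizes to a regular sequence — is routine flatness of localization, so the lemma follows once these are assembled in the order above.
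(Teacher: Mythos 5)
Your proof is correct and uses essentially the same three ingredients as the paper: grade does not decrease under localization, projective dimension does not increase under localization, and the general inequality $\grade J \leqslant \pd R/J$ (the easy half of Corollary \ref{inequality}) forces equality. The only cosmetic difference is that you explicitly dispatch the boundary case $I \nsubseteq P$ before running the chain of inequalities, which the paper leaves implicit.
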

\begin{proof}
We want to show $\grade(IR_P,R_P)=\pd_{R_P}R_P/IR_P$, i.e. we just need to show $\grade(IR_P,R_P)\geqslant\pd_{R_P}R_P/IR_P$ by Corollary \ref{inequality}.
\smallskip
\newline\indent By \cite[Corollary on Page 114]{H}, $\grade(IR_P,R_P)\geqslant\grade\,I$. So we have
\begin{equation*}
\grade(IR_P,R_P)\geqslant\grade\,I=\pd_R R/I\geqslant\pd_{R_P}R_P/IR_P.
\end{equation*}
\end{proof}

\begin{lem}\label{new1}
Let $(R,\mathfrak{m})$ be a Noetherian local ring, $I$ an $R$-ideal with $\pd_R R/I<\infty$, $P$ is a prime ideal of $R$ with $I\nsubseteq P$, and $P+I$ is $\mathfrak{m}-primary$, then $\dim\,R/P\leqslant\pd_R R/I$. 
\end{lem}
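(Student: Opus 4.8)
The plan is to run the argument of Proposition~\ref{propdim} one ring lower: instead of feeding a finite free complex over $R$ into the New Intersection Theorem, I would feed a complex over the quotient ring $R/P$.

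Concretely, first I would set $g=\pd_R R/I$, which is finite by hypothesis, and fix a minimal free resolution
\[
F_\bullet\colon 0\to F_g\to\cdots\to F_1\to F_0\to 0
\]
of $R/I$ over $R$. Reducing modulo $P$ produces $\bar F_\bullet:=F_\bullet\otimes_R R/P$, a complex of free $R/P$-modules whose terms lie in homological degrees $0,\dots,g$; since $F_g$ is free and nonzero and $P\neq R$, the top term $F_g/PF_g$ is again nonzero, so $\bar F_\bullet$ genuinely has length $g$.

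The crucial point is that $H_i(\bar F_\bullet)=\tor_i^R(R/I,R/P)$ is annihilated by $\ann_R(R/I)+\ann_R(R/P)=I+P$. Because $I+P$ is $m$-primary, every such homology module has finite length over $R$, hence over $R/P$; and $H_0(\bar F_\bullet)=R/(I+P)\neq 0$ since $I+P$ is a proper ideal (its radical being $m$). Thus $\bar F_\bullet$ is a non-exact finite free complex over the Noetherian local ring $(R/P,\,m/P)$ with all homology of finite length, so the New Intersection Theorem (Theorem~\ref{NIT}) applied over $R/P$ yields $g\geq\dim R/P$, which is exactly the assertion.

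I do not expect a genuine obstacle here; the only subtleties worth highlighting are that the $m$-primariness of $P+I$ is precisely what forces the homology of $\bar F_\bullet$ to have finite length (via the elementary fact that $\tor$ is killed by the annihilators of both of its arguments), and that the New Intersection Theorem should be invoked over $R/P$ rather than over $R$. The hypothesis $I\not\subseteq P$ is not actually used in this argument: it only serves to exclude the degenerate case in which $P$ is itself $m$-primary and $\dim R/P=0$.
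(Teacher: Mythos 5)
Your proof is correct and follows exactly the same strategy the paper uses: tensor a finite free resolution of $R/I$ over $R$ with $R/P$ to obtain a finite free complex over $R/P$ whose homology consists of the modules $\tor_i^R(R/I,R/P)$, which are killed by the $m$-primary ideal $I+P$ and hence have finite length, and then invoke the New Intersection Theorem over $R/P$. Your closing observation that the hypothesis $I\nsubseteq P$ is never actually used is also accurate: when $I\subseteq P$ the ideal $P=P+I$ is $m$-primary, so $\dim R/P=0$ and the inequality holds trivially.
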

\begin{proof}
Since $\pd_R R/I<\infty$, we can write out a minimal free resolution
$$0\rightarrow F_n\rightarrow\ldots\rightarrow F_0\rightarrow 0$$
of $R/I$ in terms of free $R$-modules $F_i$. Tensor it with $R/P$, we get that
\begin{equation}
0\rightarrow F_n\otimes_R R/P\rightarrow\ldots\rightarrow F_0\otimes_R R/P\rightarrow 0.\label{1}
\end{equation}
\indent Since $\tor_0^R(R/I,R/P)=R/I\otimes_R R/P=R/(I+P)\neq 0$, the complex \eqref{1} is not exact. And each $F_i\otimes_R R/P$ is a free $R/P$-free module. Since each $\tor_i^R(R/I,R/P)$ can be killed by $P$ and $I$, it can be killed by $P+I$. And since $P+I$ is $\mathfrak{m}$-primary, $\tor_i^R(R/I,R/P)$ can be killed by a power of $\mathfrak{m}$. So if we regard $\tor_i^R(R/I,R/P)$ as an $R/P$-module, it can be killed by a power of $\mathfrak{m}/P$, i.e. $\mathnormal{l}_{R/P}(\tor_i^R(R/I,R/P))<\infty$. Hence by Theorem \ref{NIT}, we get that $n\geqslant \dim\,R/P$, i.e. $\pd_R R/I\geqslant \dim\,R/P$.
\end{proof}

Now we can begin the proof of $R_k$ for the generalized deformation problem.

\begin{thm}\label{r}
Let $(R,\mathfrak{m})$ be a Noetherian local ring which is equidimensional and catenary, $I$ a perfect $R$-ideal with $\pd_R R/I<\infty$. If $R/I$ is $R_k$, then $R$ is $R_k$.
\end{thm}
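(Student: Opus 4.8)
The plan is to take a prime $P \in \spec(R)$ with $\height P \leqslant k$ and show $R_P$ is regular, splitting into the two cases $I \subseteq P$ and $I \nsubseteq P$. First I would dispose of the case $I \nsubseteq P$. Here I want to reduce to a situation where Lemma \ref{new1} applies, but that lemma needs $P + I$ to be $m$-primary, which is not automatic. The trick is to choose a minimal prime $Q$ of $P + I$; then $QR_Q$ is $(P+I)R_Q$-primary, so localizing at $Q$ and applying Lemma \ref{new1} to the ring $R_Q$, the ideal $IR_Q$, and the prime $PR_Q$ (note $IR_Q \nsubseteq PR_Q$ since $P + I \subseteq Q$ but we need $I \not\subseteq P$, which holds), we obtain $\dim R_Q/PR_Q \leqslant \pd_{R_Q} R_Q/IR_Q \leqslant \pd_R R/I$. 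Since $I$ is perfect, Corollary \ref{inequality} gives $\pd_R R/I = \dim R - \dim R/I$, and because $R$ is equidimensional and catenary, Proposition \ref{e+c} applied to $R$ gives $\dim R = \height P + \dim R/P$; combining, $\dim R/P - \dim R_Q/PR_Q \leqslant \height P \leqslant k$. On the other hand $\dim R_Q/PR_Q = \height(Q/P)$ and, using catenariness of $R/P$ again, $\dim R/P = \height(Q/P) + \dim R/Q$, so $\dim R/Q \leqslant k$. But $R/I$ is $R_k$ and $Q/I$ is a prime of $R/I$ with $\dim (R/I)_{Q/I} = \height(Q/I) \leqslant \dim R/I - \dim R/Q$; I would check that $\height(Q/I) \leqslant k$ using that $I$ is perfect so $R/I$ is also equidimensional and catenary (grade unmixedness of perfect ideals), hence $(R/I)_{Q/I}$ is regular. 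Then $\pd_{R_Q} R_Q/IR_Q$ drops: actually the cleanest route is to observe that $(R/I)_{Q/I}$ regular together with $\pd_{R_Q} R_Q/IR_Q < \infty$ and Lemma \ref{perfect} (so $IR_Q$ is perfect of grade $= \height IR_Q$) forces $IR_Q$ to be generated by a regular sequence, whence $R_Q$ is regular by Lemma \ref{defreg}, and then so is its localization... but we want $R_P$, not $R_Q$.

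Let me reorganize the $I \nsubseteq P$ case to land directly on $R_P$. The point is simply a dimension count: I claim $\height P \leqslant k$ together with $I \nsubseteq P$ and $I$ perfect forces a contradiction unless we can already see $R_P$ regular from $(R/I)$-data, but in fact the honest statement is that we should compare $P$ with primes of $R/I$. Choosing $Q \supseteq P + I$ minimal as above and running the inequalities, we get $\dim R/Q \leqslant k - (\dim R/P - \dim R_Q/PR_Q) \le k$. Since $R/I$ is equidimensional catenary (as $I$ is perfect, hence grade-unmixed), $\height(Q/I) = \dim R/I - \dim R/Q \geqslant \dim R/I - k$; this does not immediately bound $\height(Q/I)$ by $k$, so the $R_k$ hypothesis on $R/I$ is applied at the prime $Q/I$ only when $\dim(R/I)_{Q/I} \le k$. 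The resolution is that we instead pick $P' \subseteq P$ a minimal prime of $R$ contained in $P$ and work with the regular locus more carefully — I expect the actual argument localizes at $P$, uses that $R_P/IR_P = R_P$ (since $IR_P = R_P$ when $I \not\subseteq P$) is then vacuous, so $I \not\subseteq P$ means $R_P$ sees no information from $I$ directly and we must derive regularity of $R_P$ from a \emph{nearby} prime of $R/I$ together with the perfect ideal constraint forcing $\pd$ to be concentrated. This is the step I expect to be the main obstacle: translating the global finiteness $\pd_R R/I < \infty$ plus perfectness into local regularity of $R_P$ at primes not containing $I$, via the New Intersection Theorem inequalities and Proposition \ref{e+c}.

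For the case $I \subseteq P$: here $\bar P = P/I$ is a prime of $R/I$ and $\dim (R/I)_{\bar P} = \height \bar P$. Since $I$ is perfect, Lemma \ref{perfect} gives that $IR_P$ is a perfect $R_P$-ideal. I would like to conclude $\height \bar P \leqslant k$ from $\height P \le k$ — which is clear since $\height(P/I) \le \height P \le k$ — so $R_k$ for $R/I$ gives $(R/I)_{\bar P} = R_P/IR_P$ is regular. Now $R_P$ is a local ring with $\pd_{R_P} R_P/IR_P < \infty$ and $R_P/IR_P$ regular, so by Lemma \ref{defreg} applied to $R_P$, we conclude $R_P$ is regular. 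That finishes this case cleanly.

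So the skeleton is: (i) $I \subseteq P$: push down $R_k$ along $P \mapsto P/I$, invoke Lemma \ref{perfect} to keep finite projective dimension and Lemma \ref{defreg} to lift regularity — routine; (ii) $I \nsubseteq P$: choose $Q \supseteq P+I$ minimal, apply Lemma \ref{new1} at $R_Q$, then chase the equalities $\pd_R R/I = \height I = \dim R - \dim R/I$ (perfectness plus Corollary \ref{inequality}) and $\dim R = \height P + \dim R/P$ (Proposition \ref{e+c}, using equidimensional + catenary) to bound the relevant heights and reduce to the $R_k$ hypothesis on $R/I$ at a suitable prime, concluding regularity of $R_P$. The hard part will be organizing case (ii) so the inequalities actually close — in particular making sure that the prime of $R/I$ at which we invoke $R_k$ genuinely has dimension $\le k$, which is where equidimensionality and catenariness of $R$ (transferred to $R/I$ via perfectness) are essential.
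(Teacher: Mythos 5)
Your case $I\subseteq P$ is correct and matches the paper essentially verbatim: since $\height(P/I)\leqslant\height P\leqslant k$, the hypothesis gives $(R/I)_{P/I}\cong R_P/IR_P$ regular, and then Lemma \ref{defreg} applied over $R_P$ (noting $\pd_{R_P}R_P/IR_P<\infty$) gives $R_P$ regular. That part needs no changes.

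The case $I\nsubseteq P$ has a genuine gap, and you acknowledge it yourself near the end. You correctly choose $Q$ minimal over $P+I$, correctly invoke Lemma \ref{new1} at $R_Q$ to get $\dim R_Q/PR_Q\leqslant \pd_{R_Q}R_Q/IR_Q$, and correctly identify that Corollary \ref{inequality} (perfectness) and Proposition \ref{e+c} (equidimensional + catenary) are the other needed ingredients. But the inequality you then write, namely $\dim R/P-\dim R_Q/PR_Q\leqslant\height P$, does not follow from the pieces you list, and in any case it aims at bounding $\dim R/Q$ rather than $\height_{R/I}(Q/I)$ --- which, as you note, is the quantity you actually need to bound by $k$ in order to apply the $R_k$ hypothesis of $R/I$ at $Q/I$. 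Your subsequent attempt to repair this by passing to a minimal prime $P'\subseteq P$ does not lead anywhere, and you end by declaring this step ``the main obstacle.''

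The correct way to string the ingredients together is to bound $\height_{R/I}(Q/I)=\dim R_Q/IR_Q$ directly. Since $IR_Q$ is a perfect $R_Q$-ideal (Lemma \ref{perfect}), Corollary \ref{inequality} applied to $R_Q$ gives the \emph{equality} $\dim R_Q/IR_Q=\dim R_Q-\pd_{R_Q}R_Q/IR_Q$. Since $R_Q$ is still equidimensional and catenary, Proposition \ref{e+c} gives $\dim R_Q=\height_{R_Q}PR_Q+\dim R_Q/PR_Q=\height P+\dim R_Q/PR_Q$. Substituting,
\begin{equation*}
\dim R_Q/IR_Q \;=\; \height P + \dim R_Q/PR_Q - \pd_{R_Q}R_Q/IR_Q \;\leqslant\; \height P \;\leqslant\; k,
\end{equation*}
where the middle inequality is exactly your Lemma \ref{new1} estimate $\dim R_Q/PR_Q\leqslant\pd_{R_Q}R_Q/IR_Q$. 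Now $R/I$ being $R_k$ gives $(R/I)_{Q/I}\cong R_Q/IR_Q$ regular, Lemma \ref{defreg} gives $R_Q$ regular, and $R_P$ is a localization of $R_Q$, hence regular. So you had every tool in hand; the gap was purely in the order of substitution --- you need to push the height count into $R/I$ at $Q$, not compute $\dim R/Q$.
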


\begin{proof}
Let $P\in\spec(R)$ with $\height\,P\leqslant k$, we want to show $R_P$ is regular.
\smallskip
\newline\indent If $I\subseteq P$, then $\height_{R/I}P(R/I)\leqslant \height\,P\leqslant k$. Hence $(R/I)_{P(R/I)}\cong R_P/IR_P$ is regular. And for that $\pd_{R_P}R_P/IR_P\leqslant \pd_R R/I<\infty$, by Lemma \ref{defreg} we have that $R_P$ is regular.
\smallskip
    \newline\indent If $I\nsubseteq P$, then choose a minimal prime $Q$ over $P+I$, we claim that $\height_{R/I}Q/I\leqslant k$. To show this claim, it suffices to show that $\dim\,R_Q/IR_Q\leqslant \height_{R_Q}\,PR_Q$ because $\dim R_Q/IR_Q=\height_{R/I} Q/I$ and $\height_{R_Q}PR_Q=\height\,P$ by the condition that $R$ is equidimensional and catenary. Since localization preserves equidimensional and catenary properties and it also preserves perfect ideal by Lemma \ref{perfect} and finite projective resolutions, we may replace $R_Q$ by $(R,\mathfrak{m})$, then this goal becomes $\dim\, R/I\leqslant \height\,P$. And since $\dim\,R/I=\dim\,R-\height\,I$ by Corollary \ref{inequality}, it suffices to show $\dim\,R-\height\,I\leqslant\height\,P$. By Corollary \ref{inequality}, we have $\height\,I=\pd_R R/I$. By Proposition \ref{e+c}, we have $\dim\,R/P=\dim\,R-\height\,P$. So it suffices to show $\dim\,R/P\leqslant\pd_R R/I$, which is true by Lemma \ref{new1}.
    \smallskip
    \newline\indent Hence the claim is true. Since $R/I$ is $R_k$, we have $(R/I)_{Q/I}\cong R_Q/IR_Q$ is regular. And $\pd_{R_Q} R_Q/IR_Q\leqslant \pd_R R/I<\infty$, then by Lemma \ref{defreg}, $R_Q$ is regular. And since $R_P$ is a localization of $R_Q$, we get that $R_P$ is regular.
\end{proof}

\begin{lem}\label{new2}
Let $(R,\mathfrak{m})$ be a Noetherian local ring, $I$ an $R$-ideal with $\pd_R R/I<\infty$. Let $P$ be a prime ideal of $R$ such that $I\nsubseteq P$ and $P+I$ is $\mathfrak{m}$-primary, then $\depth\,R\leqslant\depth\,R_P+\pd_R R/I$.
\end{lem}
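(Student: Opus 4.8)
The plan is to deduce the claimed inequality from Lemma~\ref{new1} together with the classical estimate
$$\depth R\leqslant \depth R_P+\dim R/P,$$
valid for an arbitrary prime $P$ of a Noetherian local ring $(R,m)$.

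First I would recall (or reprove) that estimate. It is standard; see, e.g., \cite{BH}. For completeness it can be argued by induction on $\depth R_P$: if $\depth R_P=0$ then $PR_P\in\ass R_P$, hence $P\in\ass R$, and since $\depth R\leqslant\dim R/Q$ for every $Q\in\ass R$ one gets $\depth R\leqslant\dim R/P$; if $\depth R_P>0$, then either $P$ lies in some $Q\in\ass R$ (so $\depth R\leqslant\dim R/Q\leqslant\dim R/P$ and we are done) or, by prime avoidance, there is $x\in P$ that is a nonzerodivisor on $R$, in which case $x/1$ is automatically a nonzerodivisor on $R_P$, and replacing $(R,P)$ by $(R/xR,\,P/xR)$ lowers both $\depth R$ and $\depth R_P$ by $1$ while leaving $\dim R/P$ unchanged, so the induction closes.

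Next I would apply Lemma~\ref{new1}: since $I\nsubseteq P$, the ideal $P+I$ is $m$-primary, and $\pd_R R/I<\infty$, it yields $\dim R/P\leqslant \pd_R R/I$. Combining the two inequalities then gives
$$\depth R\leqslant \depth R_P+\dim R/P\leqslant \depth R_P+\pd_R R/I,$$
which is the assertion. I do not expect any genuine obstacle here — the only mildly nontrivial ingredient is the depth estimate above, which is classical, and the rest is immediate from Lemma~\ref{new1}. (Should one wish to keep the argument self-contained, one can avoid invoking Lemma~\ref{new1} and instead tensor a minimal free resolution of $R/I$ with $R/P$, getting a non-exact complex of free $R/P$-modules of length $\pd_R R/I$ whose homology has finite length because $P+I$ is $m$-primary, and then apply the New Intersection Theorem~\ref{NIT} to obtain $\pd_R R/I\geqslant\dim R/P$; but this just reproves Lemma~\ref{new1}.)
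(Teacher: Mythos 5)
Your proposal is correct and follows essentially the same route as the paper: the paper cites Lemma~\ref{new1} for $\dim R/P\leqslant\pd_R R/I$ and then uses $\depth R-\depth_P R\leqslant\dim R/P$ (from \cite[Exercise 1.2.23]{BH}) together with $\depth_P R\leqslant\depth R_P$, which combine to exactly your classical estimate $\depth R\leqslant\depth R_P+\dim R/P$. Your additional inductive proof of that estimate is sound but unnecessary, since the paper simply cites it.
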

\begin{proof}
By Lemma \ref{new1}, we already have $\dim\,R/P\leqslant\pd_R R/I$. And we also have $\depth_P R\leqslant\depth\, R_P$, so it suffices to show
\begin{equation}
\depth\,R-\depth_P R\leqslant\dim\,R/P.\label{5}
\end{equation}
\indent Let $x_1,\ldots,x_i$ be a maximal regular sequence contained in $P$, and extend it to $x_1,\ldots,x_i,x_{i+1},\ldots,x_n$ which is maximal regular contained in $m$. Then $n-i=\depth\,R-\depth_P R$, and $x_{i+1},\ldots,x_n$ is maximal regular on $R/(x_1,\ldots,x_i)$, so $n-i=\depth\,R/(x_1,\ldots,x_i)$.
\smallskip
\newline\indent Since $x_1,\ldots,x_i$ is maximal regular contained in $P$, $P$ does not contain any regular element on $R/(x_1,\ldots,x_i)$, hence $P$ is composed of zerodivisors of $R/(x_1,\ldots,x_i)$, hence $P$ is contained in the union of associated primes of $R/(x_1,\ldots,x_i)$, i.e. $P\subseteq Q$ for some $Q\in \ass(R/(x_1,\ldots,x_i))$. Hence $n-i=\depth\,R/(x_1,\ldots,x_i)\leqslant\dim\,R/Q\leqslant\dim\,R/P$.
\end{proof}

\begin{thm}\label{s}
Let $(R,\mathfrak{m})$ be a Noetherian local ring, $I$ an $R$-ideal with $\pd_R R/I<\infty$. If $R/I$ is $S_k$, then $R$ is $S_k$.
\end{thm}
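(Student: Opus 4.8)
The plan is to verify Serre's condition $S_k$ one prime at a time: we fix $P\in\spec(R)$ and show $\depth\,R_P\geqslant\min\{k,\dim\,R_P\}$, which is the same as saying that either $\depth\,R_P\geqslant k$ or $R_P$ is Cohen-Macaulay. The split is according to whether $I\subseteq P$ or not.

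The key lemma I would first record is that for any prime $P'$ of $R$ with $I\subseteq P'$, the ring $R_{P'}$ already satisfies the $S_k$ inequality. Indeed $R_{P'}/IR_{P'}\cong(R/I)_{P'/I}$, and since $R/I$ is $S_k$ this ring satisfies $\depth\,R_{P'}/IR_{P'}\geqslant\min\{k,\dim\,R_{P'}/IR_{P'}\}$; also $\pd_{R_{P'}}R_{P'}/IR_{P'}\leqslant\pd_R R/I<\infty$. If $\depth\,R_{P'}/IR_{P'}\geqslant k$, then by the Auslander--Buchsbaum formula $\depth\,R_{P'}=\pd_{R_{P'}}R_{P'}/IR_{P'}+\depth\,R_{P'}/IR_{P'}\geqslant k$. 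If instead $\depth\,R_{P'}/IR_{P'}<k$, then $S_k$ forces $\depth\,R_{P'}/IR_{P'}\geqslant\dim\,R_{P'}/IR_{P'}$, so $R_{P'}/IR_{P'}$ is Cohen-Macaulay, and then $R_{P'}$ is Cohen-Macaulay by \cite[Theorem 2.1]{AFH}. Either way $\depth\,R_{P'}\geqslant\min\{k,\dim\,R_{P'}\}$. Taking $P'=P$ settles the case $I\subseteq P$.

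For the case $I\nsubseteq P$ the hypothesis on $R/I$ gives nothing at $P$ (there $R_P/IR_P=0$), so I would pick $Q$ a minimal prime over $P+I$ (which exists since $P+I\subseteq m$) and pass to $R_Q$. Then $IR_Q\nsubseteq PR_Q$, the ideal $PR_Q+IR_Q$ is $QR_Q$-primary, and $\pd_{R_Q}R_Q/IR_Q<\infty$, so Lemma \ref{new2} applied to $(R_Q,QR_Q)$ gives $\depth\,R_Q\leqslant\depth\,R_P+\pd_{R_Q}R_Q/IR_Q$. Substituting the Auslander--Buchsbaum identity $\pd_{R_Q}R_Q/IR_Q=\depth\,R_Q-\depth\,R_Q/IR_Q$ and cancelling $\depth\,R_Q$ yields the transfer inequality $\depth\,R_Q/IR_Q\leqslant\depth\,R_P$. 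Now $R_Q/IR_Q\cong(R/I)_{Q/I}$ is $S_k$, and the same dichotomy applies: if $\depth\,R_Q/IR_Q\geqslant k$ then $\depth\,R_P\geqslant k$; otherwise $R_Q/IR_Q$ is Cohen-Macaulay, hence $R_Q$ is Cohen-Macaulay by \cite[Theorem 2.1]{AFH}, hence so is the localization $R_P$. In both cases $\depth\,R_P\geqslant\min\{k,\dim\,R_P\}$, and as $P$ was arbitrary, $R$ is $S_k$.

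The step I expect to be the main obstacle is the case $I\nsubseteq P$: one must somehow transport depth information from a prime $Q$ containing $I$ down to $R_P$, and the tool that makes this work is Lemma \ref{new2} (which itself rests on the New Intersection Theorem via Lemma \ref{new1} and Proposition \ref{propdim}), together with the already-known Cohen-Macaulay case \cite[Theorem 2.1]{AFH} of the generalized deformation problem. The remaining manipulations are routine bookkeeping with the Auslander--Buchsbaum formula and the definition of $S_k$.
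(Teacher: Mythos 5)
Your proof is correct and follows essentially the same route as the paper: split into $I\subseteq P$ and $I\nsubseteq P$, use the Auslander--Buchsbaum formula to transfer depth between $R$ and $R/I$, invoke Lemma \ref{new2} (via a minimal prime $Q$ over $P+I$) in the second case, and conclude via the known Cohen--Macaulay generalized deformation theorem \cite[Theorem 2.1]{AFH}. The only cosmetic difference is that you carry the full dichotomy ``$\depth\geqslant k$ or Cohen--Macaulay,'' while the paper first reduces to primes with $\depth\,R_P<k$; the content is identical.
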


\begin{proof}
It suffices to consider those $P\in\spec(R)$ such that $\depth\,R_P<k$, and show that $\depth\,R_P=\height\,P$.
\smallskip
\newline\indent If $I\subseteq P$, then $\depth\,(R/I)_P=\depth\,R_P-\pd_{R_P}R_P/IR_P\leqslant\depth\,R_P<k$. Since $R/I$ is $S_k$, then
\begin{equation}
\height_{R/I}P(R/I)=\depth\,(R/I)_{P(R/I)}.\label{2}
\end{equation}
\indent We want to show $\height\,P=\depth\,R_P$. Since $\pd_R R/I<\infty$ implies that $\pd_{R_P}R_P/IR_P<\infty$, we may replace $R_P$ by $R$, then \eqref{2} becomes
\begin{equation*}
\dim\,R/I=\depth\,R/I,
\end{equation*}
and we want to show $\dim\,R=\depth\,R$. This is true by \cite[Theorem 2.1]{AFH}.
\newline\indent If $I\nsubseteq P$, let $Q$ be a minimal prime over $P+I$. We claim that $\depth\,(R/I)_Q<k$. Since we have that $\pd_{R_Q} R_Q/IR_Q<\infty$, then by Lemma \ref{new2} we have that $\depth\,R_Q\leqslant\depth\,R_P+\pd_{R_Q}R_Q/IR_Q$. And then by Auslander-Buchsbaum's formula we get that $\depth\,(R/I)_Q=\depth\,R_Q-\pd_{R_Q}R_Q/IR_Q\leqslant\depth\,R_P<k$, hence the claim is true.
\smallskip
\newline\indent Since $R/I$ is $S_k$, we have $\depth\,(R/I)_Q=\height_{R/I}Q(R/I)$, i.e. $R_Q/IR_Q$ is Cohen-Macaulay. So by \cite[Theorem 2.1]{AFH} again, we have that $R_Q$ is Cohen-Macaulay. And since $R_P$ is a localization of $R_Q$, we have that $R_P$ is Cohen-Macaulay, i.e. $\depth\,R_P=\height\,P$.
\end{proof}

\begin{thm}\label{sandr}
Let $(R,\mathfrak{m})$ be a Noetherian local ring, $I$ an $R$-ideal with $\pd_R R/I<\infty$. If $R/I$ is $S_{k+1}$ and $R_k$, then $R$ is also $S_{k+1}$ and $R_k$.
\end{thm}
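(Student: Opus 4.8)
The plan is to treat the two Serre conditions separately, and to let the $S_{k+1}$ hypothesis on $R/I$ play the role that the equidimensional/catenary/perfect hypotheses played in Theorem \ref{r}. First, the $S_{k+1}$ conclusion for $R$ is immediate: apply Theorem \ref{s} with $k$ replaced by $k+1$ to the ideal $I$, using that $R/I$ is $S_{k+1}$. So the real content is showing that $R$ is $R_k$.

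To that end, I would fix $P\in\spec(R)$ with $\dim R_P\le k$ and show $R_P$ is regular, splitting into the usual two cases. If $I\subseteq P$, the argument is the same as in Theorem \ref{r}: $R_P/IR_P\cong (R/I)_{P/I}$ has dimension at most $\dim R_P\le k$, hence is regular since $R/I$ is $R_k$, and then Lemma \ref{defreg} (applicable because $\pd_{R_P}R_P/IR_P\le\pd_R R/I<\infty$) gives that $R_P$ is regular.

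The case $I\nsubseteq P$ is where the work lies, and the idea is to convert a statement about depth into one about dimension. Choose a minimal prime $Q$ of $P+I$; since $I\nsubseteq P$ we have $P\subsetneq Q$, and after localizing at $Q$ the ideal $PR_Q+IR_Q$ is $QR_Q$-primary, so Lemma \ref{new2} applies to $(R_Q,IR_Q,PR_Q)$ and yields $\depth R_Q\le\depth R_P+\pd_{R_Q}R_Q/IR_Q$. Because $\pd_{R_Q}R_Q/IR_Q\le\pd_R R/I<\infty$, the Auslander--Buchsbaum formula over $R_Q$ then gives
\[
\depth (R/I)_{Q/I}=\depth_{R_Q}(R_Q/IR_Q)=\depth R_Q-\pd_{R_Q}R_Q/IR_Q\le\depth R_P\le\dim R_P\le k.
\]
Now the $S_{k+1}$ property of $R/I$ forces $\dim (R/I)_{Q/I}\le k$, for otherwise $\depth (R/I)_{Q/I}\ge\min\{k+1,\dim(R/I)_{Q/I}\}=k+1$, contradicting the displayed bound. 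Since $R/I$ is $R_k$, it follows that $(R/I)_{Q/I}\cong R_Q/IR_Q$ is regular, and Lemma \ref{defreg} applied to $R_Q$ (with the ideal $IR_Q$, still of finite projective dimension over $R_Q$) shows $R_Q$ is regular; as $R_P$ is a localization of $R_Q$, so is $R_P$. This finishes $R_k$ for $R$, and combined with the first paragraph gives that $R$ is both $S_{k+1}$ and $R_k$.

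I expect the only subtle points to be the bookkeeping in the last case: verifying the hypotheses of Lemma \ref{new2} for $(R_Q,IR_Q,PR_Q)$ (in particular that $PR_Q+IR_Q$ is primary to the maximal ideal of $R_Q$, and that $IR_Q$ has finite projective dimension over $R_Q$), and keeping track of the identification $\depth_{R_Q}(R_Q/IR_Q)=\depth (R/I)_{Q/I}$ when invoking Auslander--Buchsbaum. Everything else is a direct assembly of Theorem \ref{s}, Lemma \ref{new2}, the Auslander--Buchsbaum formula, the definition of $S_{k+1}$, and Lemma \ref{defreg}.
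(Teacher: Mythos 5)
Your proof is correct and takes essentially the same route as the paper: reduce to $R_k$ via Theorem \ref{s}, split on $I\subseteq P$ vs.\ $I\nsubseteq P$, localize at a minimal prime $Q$ of $P+I$, bound $\depth(R/I)_{Q/I}$ by $k$, and use $S_{k+1}$ plus $R_k$ of $R/I$ together with Lemma \ref{defreg}. The only (cosmetic) difference is that you invoke Lemma \ref{new2} directly over $R_Q$, whereas the paper re-derives the chain of inequalities inline from Lemma \ref{new1} and the estimate $\depth R-\depth_P R\le\dim R/P$; the conclusions coincide.
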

\begin{proof}
By Theorem \ref{s}, we have already proved that $R/I$ being $S_{k+1}$ would imply $R$ being $S_{k+1}$. So it remains to prove that $R$ is $R_k$. Let $P\in\spec (R)$ with $\height\,P\leqslant k$, we need to show that $R_P$ is regular.
\smallskip
\newline\indent If $I\subseteq P$, then $\height_{R/I} P/I\leqslant\height\, P\leqslant k$, hence $(R/I)_{P/I}\cong R_P/IR_P$ is regular. Since $\pd_{R_P} R_P/IR_P<\infty$, by Lemma \ref{defreg} we have that $R_P$ is regular. This part is the same as the second paragraph of the proof of Theorem \ref{r}.
\smallskip
\newline\indent If $I\nsubseteq P$, we choose a minimal prime $Q$ over $P+I$, then we claim that $\depth\,(R/I)_{Q/I}\leqslant k$. By Auslander-Buchsbaum's formula, $\depth\,(R/I)_{Q/I}=\depth\,R_Q-\pd_{R_Q}R_Q/IR_Q$, so it suffices to show
\begin{equation*}
\depth\,R_Q-\pd_{R_Q}R_Q/IR_Q\leqslant\height_{R_Q}PR_Q=\height\,P\leqslant k.
\end{equation*}
\indent Note that we have the following
\begin{equation*}
\begin{split}
\depth\,R_Q&\leqslant\depth_{PR_Q}R_Q+\dim\,R_Q/PR_Q\\
&\leqslant\height_{R_Q}PR_Q+\dim\,R_Q/PR_Q\\
&\leqslant\height_{R_Q}PR_Q+\pd_{R_Q}R_Q/IR_Q,
\end{split}
\end{equation*}
where the first row holds because of \eqref{5} in Lemma \ref{new2}, and the third row holds because of Lemma \ref{new1}. Hence the claim is true, i.e. $\depth\,(R/I)_{Q/I}\leqslant k$.
\smallskip
\newline\indent Since $R/I$ is $S_{k+1}$, we have that
\begin{equation*}
k\geqslant\depth\,(R/I)_{Q/I}\geqslant\min \{k+1, \height_{R/I}Q/I\},
\end{equation*}
which gives that $k\geqslant\height_{R/I}Q/I$. Then since $R/I$ is $R_k$, we have that $(R/I)_{Q/I}$ is regular, i.e. $R_Q/IR_Q$ is regular. Then by Lemma \ref{defreg}, $R_Q$ is regular. Hence $R_P$ is regular since $R_P$ is a localization of $R_Q$.
\end{proof}

Since it is well known that a ring is reduced if and only if it is $R_0$ and $S_1$, and that a ring is normal if and only if it is $R_1$ and $S_2$, we naturally get the following two corollaries.

\begin{cor}\label{reduced}
Let $(R,\mathfrak{m})$ be a Noetherian local ring, $I$ an $R$-ideal with $\pd_R R/I<\infty$. If $R/I$ is reduced, then $R$ is reduced.
\end{cor}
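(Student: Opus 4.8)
The plan is to reduce everything to the Serre-condition machinery already in place. Recall the standard criterion that a Noetherian ring is reduced if and only if it satisfies $R_0$ and $S_1$ (this is quoted in the paragraph preceding the statement). So the first step is simply to observe that the hypothesis ``$R/I$ is reduced'' is equivalent to ``$R/I$ satisfies $R_0$ and $S_1$.''

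Next I would invoke Theorem \ref{sandr} with $k=0$. That theorem states that if $R/I$ is $R_k$ and $S_{k+1}$ (with $\pd_R R/I<\infty$, which is part of our hypothesis), then $R$ is $R_k$ and $S_{k+1}$; taking $k=0$ gives precisely that $R$ is $R_0$ and $S_1$. The point here is that Theorem \ref{sandr}, unlike Theorem \ref{r} (which handles $R_k$ alone but needs $R$ equidimensional and catenary with $I$ perfect), carries no extra hypotheses beyond $\pd_R R/I<\infty$, so the corollary as stated follows without any of those restrictions. Finally, applying the reduced $\Leftrightarrow R_0+S_1$ criterion in the other direction to $R$ yields that $R$ is reduced.

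I do not expect any real obstacle here: the entire content has been pushed into Theorem \ref{sandr}, and the only thing to check is the bookkeeping that reduced corresponds to the case $k=0$ of $R_k+S_{k+1}$, which is the well-known Serre criterion. The proof will therefore be just a few lines: unwind the characterization, apply Theorem \ref{sandr} at $k=0$, and rewind.
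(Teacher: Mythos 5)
Your proposal is correct and matches the paper's own approach exactly: the paper notes that reducedness is equivalent to $R_0$ plus $S_1$ and derives the corollary by applying Theorem \ref{sandr} with $k=0$. Your observation that Theorem \ref{sandr} carries no extra hypotheses (unlike Theorem \ref{r}) is also the right reason the corollary needs no equidimensionality or perfectness assumptions.
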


\begin{cor}\label{normal}
Let $(R,\mathfrak{m})$ be a Noetherian local ring, $I$ an $R$-ideal with $\pd_R R/I<\infty$. If $R/I$ is normal, then $R$ is normal.
\end{cor}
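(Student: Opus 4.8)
The plan is to reduce the statement to Serre's criterion for normality together with Theorem \ref{sandr}. Recall that a Noetherian ring is normal if and only if it satisfies Serre's conditions $R_1$ and $S_2$; in the local case the conditions $R_1$ and $S_2$ in particular force the ring to be a normal domain, since $S_1$ together with $R_0$ already yields reducedness.

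First I would note that, since $R/I$ is normal, it satisfies both $R_1$ and $S_2$. Then I would apply Theorem \ref{sandr} with $k=1$: that theorem requires only $\pd_R R/I<\infty$ (no equidimensional or catenary hypothesis, and no perfectness of $I$, because there the $R_1$ conclusion is extracted from the combination of $R_1$ and $S_2$ on $R/I$ rather than from $R_1$ alone). It gives that $R$ itself is $S_2$ and $R_1$. Invoking Serre's criterion once more, $R$ is normal, which finishes the argument. The same bookkeeping also explains why Corollary \ref{reduced} follows from Theorem \ref{s} (for $S_1$) and the case $k=0$ of $R_k$, using that reduced $\iff R_0+S_1$.

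I do not expect any genuine obstacle here: all the substantive work has already been carried out in Theorem \ref{sandr}, where the delicate point was the case $I\nsubseteq P$, handled via the New Intersection Theorem in Lemma \ref{new1} and the depth inequality $\depth R-\depth_P R\leqslant \dim R/P$ used in Lemma \ref{new2}. The only things to be careful about in writing up Corollary \ref{normal} are citing Serre's criterion correctly and confirming that Theorem \ref{sandr} is being applied with the right value of $k$, namely $k=1$.
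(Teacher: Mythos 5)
Your proposal is correct and takes exactly the route the paper intends: cite Serre's criterion (normal $\iff R_1+S_2$) and apply Theorem \ref{sandr} with $k=1$, which needs only $\pd_R R/I<\infty$. The paper in fact states Corollaries \ref{reduced} and \ref{normal} with no further argument beyond the remark preceding them, which is precisely this observation, and your parenthetical remark about why Theorem \ref{sandr} (rather than Theorem \ref{r}) supplies the $R_k$ conclusion without the equidimensional/catenary/perfect-ideal hypotheses is an accurate reading of the paper's logic.
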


At last we come to our final case for domains. We need a result in the following as a lemma:

\begin{lem}\cite[Conjecture 1]{R2}\label{zdconj}
Let $(R,\mathfrak{m})$ be a Noetherian local ring, $M$ is a nonzero $R$-module of finite projective dimension. If $x\in R$ is regular on $M$, then $x$ is regular on $R$.
\end{lem}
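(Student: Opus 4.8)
The statement to be established is Auslander's Zerodivisor Conjecture, which is by now a theorem: it was proved in the equicharacteristic case by Peskine--Szpiro and by Hochster, and in full generality by Roberts \cite{R2} as a consequence of his proof of the New Intersection Theorem in mixed characteristic. Since it is used here purely as an external input, the honest plan is simply to cite \cite{R2}. Nevertheless, if one wished to reconstruct the argument starting from the New Intersection Theorem --- which is already available to us as Theorem \ref{NIT} --- here is how I would proceed.

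First I would recast the claim in terms of associated primes. If $x$ were a nonzerodivisor on $M$ but a zerodivisor on $R$, then $x$ would lie in some $P\in\ass R$ while avoiding every $Q\in\ass M$, so that no $Q\in\ass M$ contains $P$; hence it suffices to show that each $P\in\ass R$ is contained in some associated prime of $M$. I would fix a hypothetical $P\in\ass R$ lying in no $Q\in\ass M$ and, by prime avoidance, an element $x\in P$ that is a nonzerodivisor on $M$ (one may assume $x$ is a non-unit, the unit case being trivial), and then localize at $P$ to derive a contradiction. The case $M_P\neq0$ is immediate: since $P\in\ass R$ we have $\depth R_P=0$, so the Auslander--Buchsbaum formula forces $\pd_{R_P}M_P=0$, i.e.\ $M_P$ is free and nonzero; flatness of localization keeps $x$ a nonzerodivisor on $M_P\cong R_P^{\,n}$ with $n\geq1$, hence a nonzerodivisor on $R_P$; but $x/1$ is a nonzero element of the associated prime $PR_P$ of $R_P$, so $x/1$ is a zerodivisor on $R_P$, a contradiction.

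The substantive case is $M_P=0$, i.e.\ $P\notin\operatorname{Supp}M$, where no direct localization argument is available. Here the plan would be to exploit the $M$-regularity of $x$ --- which produces a finite free resolution of $M/xM$ of length $\pd_R M+1$, for instance the mapping cone of multiplication by $x$ on a minimal resolution of $M$ --- together with a change of rings making the relevant homology modules of finite length, in order to manufacture a nonexact bounded complex of finitely generated free modules with finite-length homology that is strictly too short; Theorem \ref{NIT} would then yield the contradiction. I expect this last point to be the main obstacle: building the correct finite-length complex in the non-supported case is exactly the delicate part, and in mixed characteristic the ingredient that makes it work is precisely Roberts' New Intersection Theorem. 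For that reason I would not reprove Theorem \ref{zdconj} in this paper, and would simply invoke \cite{R2}.
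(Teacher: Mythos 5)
Your proposal matches the paper's approach exactly: both treat this statement as an external input and simply cite Roberts \cite{R2} (via Peskine--Szpiro's intersection-theorem techniques) rather than reproving it. Your supplementary sketch --- the reduction to showing each $P\in\ass R$ lies in some associated prime of $M$, the resolution of the easy case $M_P\neq 0$ via Auslander--Buchsbaum, and the identification of the unsupported case as the genuinely hard step requiring the full strength of \cite{R2} --- is accurate, and your decision not to attempt a full reproof is the same one the paper makes.
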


This conjecture is called Auslander's Zerodivisor Conjecture, but it has already become a theorem due to Peskine-Szpiro's Intersection Theorem (See \cite{R2}), so we can use it directly. Now we are ready to prove our last theorem about the generalized deformation question for domains.

\begin{thm}\label{domain}
Let $(R,\mathfrak{m})$ be a Noetherian local ring, $P$ an $R$-ideal such that $\pd_R R/P<\infty$. If $R/P$ is a domain, then $R$ is a domain.
\end{thm}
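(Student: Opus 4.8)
The plan is to combine Auslander's zerodivisor theorem (Lemma~\ref{zdconj}) with the elementary fact that a subring of a domain is a domain. The key observation is that, because $R/P$ is a domain (so $P$ is prime and $P\neq R$), an element $x\in R$ is a nonzerodivisor on the $R$-module $R/P$ if and only if $x\notin P$: indeed $x$ kills some nonzero $\bar y\in R/P$ exactly when $xy\in P$ for some $y\notin P$, and since $P$ is prime this forces $x\in P$, while conversely every $x\in P$ kills $\bar 1\neq 0$. Since $R/P$ is a nonzero $R$-module of finite projective dimension, Lemma~\ref{zdconj} then shows that every $x\notin P$ is a nonzerodivisor on $R$; equivalently, the set of zerodivisors of $R$ is contained in $P$. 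As $R$ is Noetherian this set is the union of the associated primes of $R$, so every associated prime of $R$ --- in particular every minimal prime of $R$ --- is contained in $P$.

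First I would use this to show that the localization map $R\to R_P$ is injective. Its kernel $K$ consists of the $r\in R$ annihilated by some $s\in R\setminus P$; if $K\neq 0$ it has an associated prime $Q$, which is then an associated prime of $R$ and hence contained in $P$, and writing $Q=\ann_R(r)$ for a suitable $r\in K$ we would get some $s\in R\setminus P$ lying in $\ann_R(r)=Q\subseteq P$, a contradiction. Thus $R$ embeds as a subring of $R_P$.

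Next I would check that $R_P$ is a domain. Its residue field $R_P/PR_P$ is a field, hence a regular local ring, and $\pd_{R_P}R_P/PR_P\leqslant\pd_R R/P<\infty$, so Lemma~\ref{defreg} applied to the local ring $R_P$ and the ideal $PR_P$ shows that $R_P$ is regular, hence a domain. Since $R$ is a subring of the domain $R_P$, it follows that $R$ is a domain.

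The step I expect to be the real content is the passage from ``$\pd_R R/P<\infty$'' to ``every associated prime of $R$ lies in $P$'': this is exactly where the (highly nontrivial) zerodivisor theorem is needed, and it is what rescues the naive hope that $P$ being prime already forces $R$ to be a domain. An alternative route to the conclusion, closer to the structure of the rest of Section~5, replaces the last two steps by: Corollary~\ref{reduced} shows $R$ is reduced; the regularity of $R_P$ forces a unique minimal prime of $R$ to be contained in $P$; combined with the fact that all minimal primes of $R$ lie in $P$, this shows $R$ has a single minimal prime, which in a reduced ring is $(0)$, so $R$ is a domain.
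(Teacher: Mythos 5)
Your proposal is correct, and your primary route is genuinely different from (and somewhat leaner than) the paper's. The paper first invokes Corollary~\ref{reduced} to get that $R$ is reduced, then uses Lemma~\ref{zdconj} to see that every associated (hence minimal) prime of $R$ sits inside $P$, then notes that $R_P$ is regular (hence a domain), and finally concludes via the bijection between minimal primes of $R$ and of $R_P$ that $R$ has a single minimal prime, which in a reduced ring must be $(0)$. Your main argument skips the appeal to Corollary~\ref{reduced} altogether: after observing exactly as the paper does that $\mathrm{Ass}(R)\subseteq\{Q:Q\subseteq P\}$ by Auslander's zerodivisor theorem, you instead show directly that the localization map $R\to R_P$ is injective (an element of the kernel would have an associated prime $Q\subseteq P$ yet be killed by some $s\notin P$, forcing $s\in Q\subseteq P$, a contradiction) and that $R_P$ is regular by Lemma~\ref{defreg}, so $R$ embeds in a domain. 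This avoids the machinery behind $R_0+S_1\Rightarrow$ reduced, relying only on Lemmas~\ref{zdconj} and~\ref{defreg}, which makes the proof more self-contained within Section~5. The ``alternative route'' you sketch at the end is essentially the paper's own argument. Both are valid; yours buys a small simplification, the paper's keeps uniformity with the preceding corollaries.
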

\begin{proof}
Since $R/P$ is a domain, it is reduced. Then by Corollary \ref{reduced}, we have that $R$ is reduced. So to prove that $R$ is a domain, it suffices to show that $R$ has a unique minimal prime ideal.
\smallskip
\newline\indent Let $M=R/P$ as in Lemma \ref{zdconj}, then this lemma implies that if $x\notin P$, then $x$ is regular on $R$. Equivalently, if $x$ is a zerodivisor on $R$, then $x\in P$. So all associated primes of $R$ are contained in the prime ideal $P$. Since $R$ is reduced, this is equivalent to saying that all minimal primes of $R$ are contained in $P$.
\smallskip
\newline\indent Since $\pd_R R/P<\infty$, we have that $\pd_{R_P}R_P/PR_P<\infty$, i.e. $R_P$ is a regular local ring. Hence $R_P$ is a domain, which says that $R_P$ has a unique minimal prime ideal. Since all minimal primes of $R$ are contained in $P$, due to the one-to-one correspondence between the minimal primes of $R$ and the minimal primes of $R_P$, we know that $R$ has a unique minimal prime. Hence $R$ is a domain.
\end{proof}

\section*{Acknowledgement}
Here I would like to thank Professor Linquan Ma for introducing the topics to me and giving many really important comments on the work. The author is partially supported by NSF-DMS grant 1901672. The author thanks the referee for valuable comments.
\bigskip


\begin{thebibliography}{ABCD99}
\bibitem{A} Aberbach, Ian M.(1-MO) \emph{Tight closure in $F$-rational rings.}
Nagoya Math. J. \textbf{135}(1994),43-54.

\bibitem{AKM98} Aberbach, Ian M.(1-MO); Katzman, Mordechai(1-MN); MacCrimmon, Brian(1-MO) \emph{Weak $F$-regularity deforms in $\mathbb{Q}$-Gorenstein rings.} J. Algebra \textbf{204}(1998), no.1, 281-285.

\bibitem{AFH} Avramov, Luchezar L.(BG-SOFI); Foxby, Hans-Bj\o rn(DK-CPNH); Halperin, Stephen(3-TRNT72) \emph{Descent and ascent of local properties along homomorphisms of finite flat dimension.} J. Pure Appl. Algebra \textbf{38}(1985), no.2-3, 167-185.

\bibitem{BE} Buchsbaum, David A.; Eisenbud, David \emph{What makes a complex exact?.} J. Algebra \textbf{25}(1973), 259-268.

\bibitem{BH} Bruns, Winfried; Herzog, H. J{\"{u}}rgen \emph{Cohen-Macaulay Rings.} Cambridge: Cambridge Univ. Press \textbf{39}(1998).

\bibitem{BR} Brezuleanu, A.; Rotthaus, C. \emph{Eine Bemerkung {\"{u}}ber Ringe mit geometrisch normalen formalen Fasern.} Arch. Math. (Basel)\textbf{39}(1982), no.1, 19-27.

\bibitem{GD1} Dieudonn\'{e}, J.; Grothendieck, A. \emph{\'{E}l\'{e}ments de g\'{e}om\'{e}trie alg\'{e}que. \Rmnum{4}. \'{E}tude locale des sch\'{e}mas et des morphismes de sch\'{e}mas. \Rmnum{1}.} Inst. Hautes \'{E}tudes Sci. Publ. Math. \textbf{20}(1964), 1-259.
\bibitem{GD2} Dieudonn\'{e}, J.; Grothendieck, A. \emph{\'{E}l\'{e}ments de g\'{e}om\'{e}trie alg\'{e}que. \Rmnum{4}. \'{E}tude locale des sch\'{e}mas et des morphismes de sch\'{e}mas. \Rmnum{2}.} Inst. Hautes \'{E}tudes Sci. Publ. Math. \textbf{24}(1965), 1-231.

\bibitem{DM} Datta, Rankeya; Murayama, Takumi \emph{Permanence properties and $F$-injectivity.}, to appear in Math. Res. Lett., available at \url{https://arxiv.org/abs/1906.11399}.

\bibitem{Fed83} Fedder, Richard(1-MO) \emph{$F$-purity and rational singularity.} Trans. Amer. Math. Soc. \textbf{278}(1983), no.2, 461-480.

\bibitem{H} Hochster, Melvin \emph{Foundations of tight closure theory.}, available at \url{https://dept.math.lsa.umich.edu/~hochster/711F07/fndtc.pdf}.

\bibitem{H1} Hochster, Melvin \emph{Topics in the Homological Theory of Modules over Commutative Rings.} CBMS Regional Conference Series in Mathematics \textbf{24}(1975).

\bibitem{HH94a} Hochster, Melvin(1-MI); Huneke, Craig(1-PURD) \emph{$F$-regularity, test elements, and smooth base change.} Trans. Amer. Math. Soc. \textbf{346}(1994), no.1, 1-62.

\bibitem{HMS14} Horiuchi, Jun(J-NPIT); Miller, Lance Edward(1-AR); Shimomoto, Kazuma(J-MEIJ2T) \emph{Deformation of $F$-injectivity and local cohomology.} Indiana Univ. Math. J. \textbf{63}(2014), no.4, 1139-1157.

\bibitem{HW02} Hara, Nobuo(J-WASES); Watanabe, Kei-Ichi(J-NIHOH) \emph{$F$-regular and $F$-pure rings vs. log terminal and log canonical singularities.} J. Algebraic Geom. \textbf{11}(2002), no.2, 363-392.

\bibitem{I} Ionescu, Cristodor(R-ASRR) \emph{Sur les anneaux aux fibres formelles g\'{e}om\'{e}triquement r\'{e}guli\`{e}res en codimension $n$.} Rev. Roumaine Math. Pures Appl. \textbf{31}(1986), no.7, 599-603.

\bibitem{MP} Linquan Ma and Thomas Polstra \emph{$F$-singularities: a commutative algebra approach.}, available at \url{https://www.math.purdue.edu/~ma326/F-singularitiesBook.pdf}.

\bibitem{M} Matsumura, Hideyuki \emph{Commutative Ring Theory}. Cambridge: Cambridge Univ. Press \textbf{8} (1987). 

\bibitem{PS} Peskine, C.; Szpiro, L. \emph{dimension projective finie et cohomologie locale. Applications \`{a} la d\'{e}monstration de conjectures de M. Auslander, H. Bass et A. Grothendieck.}
Inst. Hautes \'{E}tudes Sci. Publ. Math. (1973), no.42, 47-119.

\bibitem{PS20} Polstra, Thomas; Simpson, Austyn \emph{$F$-purity deforms in $\mathbb{Q}$-Gorenstein rings.}, to appear in Int. Math. Math. Res. Not., available at \url{https://arxiv.org/abs/2009.13444}.

\bibitem{R} Roberts, Paul C. \emph{Multiplicities and Chern Classes in Local Algebra.} Cambridge Tracts in Mathematics \textbf{133}(1998).

\bibitem{R2} Roberts, Paul C. \emph{Intersection theorems}, in: Commutative Algebra, in: Math. Sci. Res. Inst. Publ. \textbf{15}(1989), Springer-Verlag, Berlin, 417-436.

\bibitem{R1} Roberts, Paul C. \emph{Le th\'{e}or\.{e}me d'intersection.} C.R. Acad. Sc. Paris S\'{e}r. I, N$^{\circ}$ 7, t.304 (1987), 177-180.

\bibitem{QS} Quy, Pham Hung(VN-FPT-M); Shimomoto, Kazuma(J-NIHOH) \emph{$F$-injective and Frobenius closure of ideals in Noetherian rings of characteristic $p>0$.} Adv. Math. \textbf{313}(2017), 127-166.

\bibitem{Sch09} Schwede, Karl(1-MI) \emph{$F$-adjunction.} Algebra Number Theory \textbf{3}(2009), no.8, 907-950.

\bibitem{Sey} Seydi, Hamet \emph{La th\'{e}orie des anneaux japonais.} Universit\'{e} de Rennes, Rennes, 1972.

\bibitem{Sin99b} Singh, Anurag K.(1-MI) \emph{Deformation of $F$-purity and $F$-regularity.} J. Pure Appl. Algebra \textbf{140}(1999), no.2, 137-148.

\bibitem{Sin99c} Singh, Anurag K.(1-MI) \emph{$F$-regularity does not deform.} Amer. J. Math. \textbf{121}(1999), no.4, 919-929.














\end{thebibliography}
\end{document}